\definecolor{mylinkcolor}{rgb}{0.0,0.0,0.65}
\definecolor{mycitecolor}{rgb}{0.0,0.0,0.65}
\definecolor{myurlcolor}{rgb}{0.0,0.0,0.65}
\DeclareMathAlphabet{\mathfr}{U}{euf}{m}{n}
\newtheorem{theorem}{Theorem}[section]
\newtheorem*{theorem*}{Theorem}
\newtheorem{proposition}[theorem]{Proposition}
\newtheorem{corollary}[theorem]{Corollary}
\newtheorem{lemma}[theorem]{Lemma}
\theoremstyle{remark}
\newtheorem{remark}[theorem]{Remark}
\newcommand{\Q}{\mathbb Q}
\newcommand{\Qbar}{{\overline{\mathbb Q}}}
\newcommand{\Gal}{\mathrm{Gal}}
\newcommand{\R}{\mathbb R}
\newcommand{\Z}{\mathbb Z}
\newcommand{\C}{\mathbb C}
\newcommand{\F}{\mathbb F}
\newcommand{\triv}{\mathbf{1}}
\newcommand{\GL}{\mathrm{GL}}
\newcommand{\SM}{\operatorname{SM}}
\newcommand{\End}{\operatorname{End}}
\newcommand{\Hom}{\operatorname{Hom}}
\newcommand{\Frob}{\operatorname{Fr}}
\newcommand{\Ind}{\operatorname{Ind}}
\newcommand{\USp}{\operatorname{USp}}
\newcommand{\ad}{\operatorname{ad}}
\newcommand{\thetabar}{{\overline \theta}}
\newcommand{\chibar}{{\overline \chi}}
\newcommand{\psibar}{{\overline \psi}}
\newcommand{\Aut}{\operatorname{Aut}}
\newcommand{\p}{\mathfrak{p}}
\newcommand{\Tr}{\operatorname{Tr}}
\newcommand{\ST}{\operatorname{ST}}
\newcommand{\id}{\operatorname{id}}
\newcommand{\Unitary}{\operatorname{U}}
\newcommand{\Nm}{\operatorname{Nm}}
\newcommand{\cA}{\mathcal{A}}
\newcommand{\ba}{\mathbf{a}}
\newcommand{\be}{\mathbf{e}}
\newcommand{\Conj}{\mathrm{Conj}}
\numberwithin{equation}{section}
\newcommand{\Ab}{\textbf{A}}
\newcommand{\Bb}{\textbf{B}}
\newcommand{\Cb}{\textbf{C}}
\newcommand{\Db}{\textbf{D}}
\newcommand{\Eb}{\textbf{E}}
\newcommand{\Fb}{\textbf{F}}
\newcommand{\Gb}{\textbf{G}}
\newcommand{\Hb}{\textbf{H}}
\newcommand{\Ib}{\textbf{I}}
\newcommand{\Jb}{\textbf{J}}
\newcommand{\Kb}{\textbf{K}}
\newcommand{\Lb}{\textbf{L}}
\newcommand{\Mb}{\textbf{M}}
\newcommand{\Nb}{\textbf{N}}
\newcommand{\cyc}[1]{{\mathrm{C}_#1}}
\newcommand{\dih}[1]{{\mathrm{D}_#1}}
\newcommand{\sym}[1]{{\mathrm{S}_#1}}
\newcommand*{\stgroup}[2][]{\href{https://www.lmfdb.org/SatoTateGroup/#2}{{\ifx&#1& #2 \else #1 \fi}}}
\newcommand{\arXiv}[2]{\href{https://arxiv.org/abs/#1}{arXiv:#1v#2}}
\begin{document}
\title[]{On a local-global principle\\ for quadratic twists of abelian varieties}
\author{Francesc Fit\'e}

\address{Departament de matem\`atiques i inform\`atica and Centre de recerca matem\`atica,
Universitat de Barcelona,
Gran via de les Corts Catalanes 585, 08007 Barcelona, Catalonia}
\email{ffite@ub.edu}
\urladdr{http://www.ub.edu/nt/ffite/}

\begin{abstract} 
Let $A$ and $A'$ be abelian varieties defined over a number field~$k$ of dimension $g\geq 1$. For $g\leq 3$, we show that the following local-global principle holds: $A$ and $A'$ are quadratic twists of each other if and only if, for almost all primes $\p$ of $k$ of good reduction for $A$ and $A'$, the reductions $A_\p$ and~$A_\p'$ are quadratic twists of each other. This result is known when $g=1$, in which case it has appeared in works by Kings, Rajan, Ramakrishnan, and Serre. We provide an example that violates this local-global principle in dimension $g=4$.
\end{abstract}

\maketitle

\section{Introduction}\label{section: introduction} 

\subsection{Main result} Let $k$ be a number field and let $A$ be an abelian variety defined over $k$ of dimension $g\geq 1$. We will denote by~$A_\psi$ the twist of $A$ corresponding to a given $1$-cocycle $\psi\colon G_k\rightarrow \Aut(A_{\Qbar})$, where $G_k$ denotes the absolute Galois group of $k$.
We will say that an abelian variety $A'$ defined over $k$ is a \emph{quadratic twist of~$A$} if it is isogenous over $k$ to $A_\chi$ for some (possibly trivial) quadratic character $\chi \colon G_k \rightarrow \{\pm 1\}\subseteq \Aut(A_\Qbar)$. 
Let $\Sigma_k$ be the set of nonzero prime ideals of the ring of integers of~$k$. For~$\p\in\Sigma_k$, denote by $k_\p$ the residue field of $k$ at $\p$, and if $\p$ is of good reduction for $A$, let~$A_\p$ be the reduction of $A$ modulo~$\p$. Let $f_{A_\p}$ be the  the characteristic polynomial of Frobenius relative to $k_\p$ acting on an $\ell$-adic Tate module of $A_\p$ for some prime $\ell$ coprime to $\p$. If $\p$ is of good reduction for both $A$ and $A'$, we will say that~$A$ and~$A'$ are \emph{locally quadratic twists at $\p$} if $A_\p$ and $A'_\p$ are quadratic twists, that is, if either
$$
f_{A_\p}(T)=f_{A'_\p}(T)\qquad \text{or}\qquad f_{A_\p}(T)=f_{A'_\p}(-T)\,.
$$  
One easily sees that if $A$ and $A'$ are quadratic twists, then 
\begin{equation}\label{equation: main condition}
\text{$A$ and $A'$ are locally quadratic twists at almost all primes of $\Sigma_k$.}
\end{equation}

Throughout the article, by ``almost all primes of $\Sigma_k$" we mean ``all primes in a (Dirichlet) density~$1$ subset of $\Sigma_k$". Our main object of study will be the converse of the above implication. The following is our main result.

\begin{theorem}\label{theorem: Main}
Let $A$ and $A'$ be abelian varieties defined over $k$ of dimension $g \leq 3$. Suppose that \eqref{equation: main condition} holds for $A$ and $A'$. Then $A$ and $A'$ are quadratic twists. 
\end{theorem}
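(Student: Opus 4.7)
By Faltings's isogeny theorem, together with Chebotarev density and semisimplicity of the rational $\ell$-adic Tate module, the conclusion $A'\sim A_\chi$ for some quadratic character $\chi$ is equivalent to the existence of a continuous quadratic character $\chi\colon G_k\to\{\pm 1\}$ such that
\[
f_{A'_\p}(T)=f_{A_\p}\!\bigl(\chi(\Frob_\p)T\bigr)\quad\text{for almost all }\p\in\Sigma_k,
\]
and the hypothesis \eqref{equation: main condition} says precisely that for almost every $\p$ at least one of the choices $\chi(\Frob_\p)=\pm 1$ works. The plan is therefore to assemble these local choices into a single global quadratic character.

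To do this I would pass to the algebraic monodromy. Let $\Gb\subseteq \GL_{2g}\times\GL_{2g}$ be the Zariski closure over $\Q_\ell$ of the image of $G_k$ under $\rho_{A,\ell}\oplus\rho_{A',\ell}$, and introduce the closed subvarieties
\[
Z_\pm=\bigl\{(g_1,g_2)\in\Gb:\mathrm{charpoly}(g_1)(T)=\mathrm{charpoly}(g_2)(\mp T)\bigr\}.
\]
The hypothesis, combined with Chebotarev density of Frobenius conjugacy classes in $\Gb$, forces $\Gb=Z_+\cup Z_-$. Since both $Z_\pm$ are Zariski closed, each irreducible component of $\Gb$ is contained in $Z_+$, in $Z_-$, or in the ``ambiguous'' locus $Z_+\cap Z_-$ where all odd-degree coefficients of the two characteristic polynomials vanish identically. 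Setting $\chi\equiv +1$ on components in $Z_+\setminus Z_-$ and $\chi\equiv -1$ on components in $Z_-\setminus Z_+$ defines $\chi$ unambiguously on part of the component group $\pi_0(\Gb)$; what remains is to extend these values to a group homomorphism $\pi_0(\Gb)\to\{\pm 1\}$ by a compatible choice on the ambiguous components. Any such homomorphism descends to the required quadratic character of $G_k$.

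The construction of this homomorphism is the main obstacle, and it is where the restriction $g\leq 3$ intervenes. My strategy is to use the classification of connected algebraic monodromy groups of abelian varieties of dimension $\leq 3$ (equivalently, the identity components of their Sato--Tate groups): for each possibility for the pair consisting of $\Gb^0$ and its representation on $V_\ell(A)\oplus V_\ell(A')$, I would decompose the representation into isotypic pieces, determine exactly which components of $\Gb$ can be ambiguous, and check that the values of $\chi$ forced on the non-ambiguous components are already compatible with the multiplicative structure of $\pi_0(\Gb)$, and that a consistent assignment on the remaining components always exists. The key representation-theoretic rigidity that this requires is available in dimensions $g\leq 3$ but fails in dimension $4$; the explicit counterexample at the end of the paper will exhibit a pair $(A,A')$ whose ambiguous locus obstructs the existence of any consistent $\chi$, so some low-dimensional input is genuinely needed.
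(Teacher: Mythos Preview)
Your framework is sound as a reformulation: passing to the Zariski closure $\Gb$ of the image of $\varrho_{A,\ell}\oplus\varrho_{A',\ell}$, writing $\Gb=Z_+\cup Z_-$, and seeking a homomorphism $\chi\colon\pi_0(\Gb)\to\{\pm 1\}$ compatible with the $Z_\pm$-membership of each component is a correct way to encode the problem. But the proposal stops exactly where the real work begins, and there are two specific gaps.

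First, you offer no mechanism for the compatibility check. The varieties $Z_\pm$ are not subgroups of $\Gb$, and the characteristic polynomial of a product is not determined by those of the factors, so membership in $Z_+\setminus Z_-$ is not visibly multiplicative. You assert that one can ``check that the values of $\chi$ forced on the non-ambiguous components are already compatible with the multiplicative structure of $\pi_0(\Gb)$'', but this assertion \emph{is} the theorem, and you give no tool for verifying it. The paper's proof shows that even after reducing to a component-group problem, substantial external input is required: when $A_{\Qbar}$ is isogenous to the square of an elliptic curve one needs the Tate-module tensor decompositions of \cite{FG20} together with Ramakrishnan's theorem on recovering a degree-$2$ representation from its adjoint (Theorem~\ref{theorem: Ramakrishnan} and Corollary~\ref{corollary: polysuptosign}); when $\varrho_{A,\ell}$ has an unrepeated strongly absolutely irreducible factor one needs Serre's density results from \cite{Ser81} (Theorem~\ref{theorem: serre}); and the descent of the twisting character from the endomorphism field $K$ down to $k$ (Theorem~\ref{theorem: mild refinement serre}, Lemma~\ref{lemma: Cnextensions}, Lemma~\ref{lemma: greatly simplifying lemma}, Proposition~\ref{proposition: keycaseL}) requires case-specific analysis of the Galois structure of auxiliary extensions. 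None of these ingredients is visible in your sketch.

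Second, the classification you plan to invoke is not quite the right one. The identity component $\Gb^0$ is indeed determined by the connected monodromy of $A$ alone (since $A_L\sim A'_L$ for some finite $L$ by Corollary~\ref{corollary: locquad geomiso}), but $\pi_0(\Gb)$ depends on \emph{both} $A$ and $A'$: it surjects onto $\Gal(L/k)$ for the minimal $L$ with $A_L\sim A'_L$, and this extension is not governed by the Sato--Tate classification of $A$. Knowing the list of possible $\Gb^0$ therefore does not enumerate the possible $\pi_0(\Gb)$, so the case analysis you propose is not yet well-posed. The paper instead organizes the case analysis by the absolute type of $A$ (the isogeny decomposition of $A_{\Qbar}$), which does control enough of the situation to proceed.

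In short, the proposal is an accurate high-level outline of one way to organize the argument, but the entire proof content lies in the case analysis that you describe only by the word ``check''.
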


We complement the above theorem by providing two abelian fourfolds defined over $\Q$ that are locally quadratic twists at all odd primes, but which are not quadratic twists. Let $a_\p(A)$ denote the Frobenius trace of $A$ at $\p$. The above theorem is not true if one replaces condition \eqref{equation: main condition} by the weaker condition that $a_\p(A)$ and $a_\p(A')$ coincide up to sign for almost all $\p$ in $\Sigma_k$.
Under this weaker hypothesis, it is easy to find counterexamples, and we give one in dimension $g=2$. Both examples can be found in \S\ref{section: examples}. 

\subsection{Previous results} 
Theorem \ref{theorem: Main} is known if $g=1$ (at least if~$A$ does not have complex multiplication \emph{defined over}~$k$). There are different proofs of this fact in works of Serre \cite[p. 324]{Ser72}, Kings \cite[p. 90-91]{Kin98}, and Ramakrishnan \cite[Thm. B]{Ram00} (note that in these works the results are generally phrased in terms of Galois representations attached to modular forms). In the CM case, there is a closely related result by Wong \cite[Cor. 1]{Won99}. 

The dimension $g=1$ case of Theorem \ref{theorem: Main} can also be retrieved from a general result on $\ell$-adic representations due to Rajan \cite{Raj98}. More in general, as we will explain in \S\ref{section: generalities}, this result implies that if two abelian varieties $A$ and $A'$ satisfy~\eqref{equation: main condition}, then $A_\Qbar$ and $A'_\Qbar$ are isogenous. Refining this conclusion to the statement of Theorem~\ref{theorem: Main} is the goal of this article. 

The above mentioned results are just a few instances of a vast literature on local-global principles concerning abelian varieties over number fields. Our problem is closely related to a recent result of Khare and Larsen \cite{KL20}. They show that the base changes $A_\Qbar$ and $A'_\Qbar$ are isogenous if and only if for almost\footnote{Actually in \cite{KL20} it suffices to assume that a sufficiently large density of primes of~$\Sigma_k$ has the required property. Obtaining an analogous refinement of Theorem \ref{theorem: Main} seems an interesting question, which we have not attempted to address in this article.} all primes $\p$ in~$\Sigma_k$ so are the base changes $A_\p\times \overline k_\p$ and $A'_\p\times \overline k_\p$, where $\overline k_\p$ denotes an algebraic closure of $k_\p$. Our methods of proof and those of \cite{KL20} are rather different (mainly due to the fact that ours is a question sensitive to base change). 

In fact, we only prove the local-global principle investigated in this article for certain classes of abelian varietes. The fact that these classes end up encompassing those of dimension $g\leq 3$ may be regarded as an ``accident in low dimension". This is reminiscent of other local-global questions concerning abelian varieties over number fields. For example, Katz \cite{Kat81} showed that if $g\leq 2$ and for almost all $\p$ the cardinality of $A_\p(k_\p)$ is divisible by some prime number~$m$, then there exists an abelian variety $A'$ isogenous to~$A$ such that $m$ divides the order of the torsion subgroup of $A'(k)$; he also showed that this fails to be true if $g\geq 3$. 

\subsection{Outline of the article} The methods employed to prove Theorem \ref{theorem: Main} all involve the study of the $\ell$-adic representation $\varrho_{A,\ell}$ attached to the abelian variety~$A$. Let us briefly summarize them. We will say that a subset $\cA$ of the set of abelian varieties defined over the number field~$k$ satisfies the \emph{local-global QT principle} if any pair of abelian varieties in~$\cA$ that satisfy \eqref{equation: main condition} are quadratic twists.
 
In \S\ref{section: generalities}, we record some background results. We start by deriving some consequences of Faltings isogeny theorem; this implies, for example, that if $A$ and $A'$ satisfy \eqref{equation: main condition}, then $A$ and $A'$ share the same endomorphism field $K$. We then show that the result by Rajan mentioned above implies that the local-global QT principle holds for those abelian varieties~$A$ such that $\End(A_\Qbar)=\Z$. We conclude \S\ref{section: generalities} by describing some connections with the theory of Sato--Tate groups. Our proof of Theorem \ref{theorem: Main} is independent of the Sato--Tate conjecture, but it does benefit from the classification of Sato--Tate groups of abelian varieties of dimension $\leq 3$.

Suppose that $A$ and $A'$ satisfy \eqref{equation: main condition}. The main results of \S\ref{section: Rajan} are two variants of Rajan's theorem that build on \cite{Ser81}: Theorem~\ref{theorem: serre} applies when $\varrho_{A,\ell}$ has an unrepeated strongly absolutely irreducible factor; Theorem \ref{theorem: mild refinement serre} shows that, under a certain technical assumption, the base changes~$A_K$ and~$A'_K$ are quadratic twists. A common strategy in later sections consists on first applying Theorem \ref{theorem: mild refinement serre}, and then using arguments specific to the situation of interest to descend the validity of the local-global QT principle from $K$ to $k$.

In \S\ref{section: productsec}, we consider families of abelian varieties for which the methods described in the previous paragraph fail to apply. Those include abelian varieties that are geometrically isogenous to the  power of an elliptic curve. In this case, the Tate module tensor decompositions obtained in \cite{FG20} allow to translate our problem concerning $\ell$-adic representations of degree $2g$ into one concerning Artin representations of degree $g$. This reduction by half in the degree of the representations turns out to be crucial. Indeed, the corresponding local-global principle for Artin representations is almost immediate when $g$ is odd, and, while it can fail for even $g$, it does hold for $g=2$. This can be seen by means of reinterpreting Ramakrishnan's theorem \cite[Thm. B]{Ram00} in the context of Artin representations. Hence, Ramakrishnan's theorem, originally conceived to treat the dimension $g=1$ case of our problem, ends up playing an important role in its generalization in dimension $g=2$.
In \S\ref{section: productsecCM}, we show that produtcs of geometrically pairwise nonisogenous elliptic curves with complex multiplication satisfy the local-global QT principle. In this section, we use Hecke's equidistribution theorem \cite{Hec20} at several points; this is natural and intuitive, but it would have sufficed to use instead results from \cite{Ser81}.

In \S\ref{section: proof}, we complete the proof of Theorem \ref{theorem: Main}. The proof takes into account the different possibilities for $\End(A_\Qbar)\otimes \Q$. Many cases were covered in previous sections. Other cases, as mentioned above, involve invoking Theorem \ref{theorem: serre} or Theorem~\ref{theorem: mild refinement serre}, and then providing a few adhoc arguments. As an example of the latter, we mention the case in which $A$ is isogenous to the product of an elliptic curve $E$ and and abelian surface $B$ such that $\Hom(E_\Qbar,B_\Qbar)=0$. If $E$ does not have CM, then Theorem~\ref{theorem: Main} immediately follows from Theorem~\ref{theorem: serre}. However, if $E$ has CM, Theorem \ref{theorem: serre} essentially only provides two quadratic characters $\chi$ and $\psi$ such that $A'\sim E_\chi \times B_\psi$.  
That one can take $\chi=\psi$ is the content of Proposition~\ref{proposition: keycaseL}. 

\subsection{Notation and terminology} Throughout the article, $k$ is a number field, and~$A$ and~$A'$ are abelian varieties defined over $k$ of dimension $g\geq 1$. All algebraic extensions of $k$ are assumed to be contained in a fixed algebraic closure $\Qbar$ of $\Q$. If $L/k$ is one such field extension, then we will write $G_L$ to denote the absolute Galois group $\Gal(\Qbar/L)$, and $A_L$ to denote the base change $A\times_k L$. We denote by $\End^0(A)$ the endomorphism algebra of $A$, and refer to $\End^0(A_\Qbar)$ as the geometric endomorphism algebra of $A$. However, due to a widely used convention, when we say that $A$ has complex multiplication (CM), we in fact mean that $A_\Qbar$ has CM. Whenever it becomes necessary to ask that the CM be defined over $k$, we will explicitly specify it. If $E$ is a field and $\theta\colon G_k\rightarrow \GL_r(E)$ is a representation, then $\theta|_L$ denotes the restriction of~$\theta$ to~$G_L$, and $\theta^\vee$ denotes the contragredient representation of $\theta$. We will say that $\theta$ is \emph{absolutely irreducible} if $\theta\otimes \overline E$ is irreducible, where $\overline E$ denotes an algebraic closure of $E$. We will say that $\theta$ is \emph{strongly absolutely irreducible} if $\theta|_L$ is absolutely irreducible for every finite extension $L/k$.

\subsection{Acknowlegements.} Thanks to Edgar Costa for his assistance in \S\ref{section: examples}, to Xavier Guitart for discussions that led to this investigation, to Kiran Kedlaya for discussions around Lemma \ref{lemma: same ST groups under ST}, and to Bjorn Poonen for explaining to me the proof of Lemma~\ref{lemma: splittingoverk}. I was financially supported by the Simons Foundation grant 550033, the Ram\'on y Cajal fellowship RYC-2019-027378-I, and the Mar\'ia de Maeztu Program CEX2020-001084-M.

\section{Generalities}\label{section: generalities}

We will derive some properties that~$A$ and~$A'$ must satisfy in case \eqref{equation: main condition} holds. First we record some elementary consequences of Faltings isogeny theorem \cite{Fal83}; then we describe some implications of a theorem of Rajan; finally we explain the connection of our problem to the theory of Sato--Tate groups and derive some consequences of their classification for abelian surfaces.

\subsection{Consequences of Faltings isogeny theorem} For a prime number~$\ell$, let $V_\ell(A)$ denote the rational $\ell$-adic Tate module of~$A$. There is a continuous action of $G_k$ on $V_\ell(A)$ which gives rise to an $\ell$-adic representation $\varrho_{A,\ell}\colon G_k\rightarrow \Aut(V_\ell(A))$. For $\p \in \Sigma_k$, denote by $\Frob_\p$ an arithmetic Frobenius element at $\p$. By the work of Weil, if $\p$ is of good reduction for $A$ and does not divide $\ell$, then 
$$
L_\p(A,T):=\det(1-\varrho_{A,\ell}(\Frob_\p)T)
$$
is a polynomial of degree $2g$ with integer coefficients which does not depend on the choice of $\ell$. 
One easily verifies that $\varrho_{A_\chi,\ell}\simeq \chi \otimes \varrho_{A,\ell}$ for any quadratic character~$\chi$, and Faltings isogeny theorem then gives that
$A'$ is a quadratic twist of $A$ by~$\chi$ if and only if $\varrho_{A',\ell}\simeq \chi\otimes \varrho_{A,\ell}$. Together with the fact that $L_\p(A,T)$ coincides with the reverse Weil polynomial $T^{2g}f_{A_\p}(1/T)$ of the reduction~$A_\p$, this has the following consequence.

\begin{lemma}\label{lemma: globaltolocal}
If $A$ and $A'$ are quadratic twists, then they are locally quadratic twists at every prime~$\p$ which is of good reduction for both.
\end{lemma}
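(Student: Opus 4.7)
The plan is to reduce everything to the identity of $\ell$-adic representations furnished by Lemma~\ref{lemma: basic} and then read off the Weil polynomials. Suppose $A'$ is a quadratic twist of $A$ by some quadratic character $\chi\colon G_k\to\{\pm 1\}$. By Lemma~\ref{lemma: basic}, there is an isomorphism of $\Q_\ell[G_k]$-modules
$$
\varrho_{A',\ell}\;\simeq\;\chi\otimes\varrho_{A,\ell}.
$$
Fix $\p\in\Sigma_k$ of good reduction for both $A$ and $A'$ and pick a prime $\ell$ with $\p\nmid\ell$.

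First I would check that $\chi$ is unramified at $\p$. Since $A$ has good reduction at $\p$, the inertia group $I_\p$ acts trivially on $V_\ell(A)$, so by the displayed isomorphism $I_\p$ acts on $V_\ell(A')$ through the character $\chi|_{I_\p}$. Good reduction of $A'$ at $\p$ forces this action to be trivial (by the Néron--Ogg--Shafarevich criterion), hence $\chi|_{I_\p}=1$ and $\chi(\Frob_\p)\in\{\pm 1\}$ is well-defined.

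Now evaluate the $L$-polynomial. Writing $\epsilon:=\chi(\Frob_\p)$, we obtain
$$
L_\p(A',T)=\det\bigl(1-\epsilon\,\varrho_{A,\ell}(\Frob_\p)\,T\bigr)=L_\p(A,\epsilon T).
$$
Since the Weil polynomial and the $L$-polynomial are related by $f_{A_\p}(T)=T^{2g}L_\p(A,1/T)$, and similarly for $A'$, this yields
$$
f_{A'_\p}(T)=T^{2g}L_\p(A',1/T)=T^{2g}L_\p(A,\epsilon/T)=f_{A_\p}(\epsilon T).
$$
Thus $f_{A'_\p}(T)=f_{A_\p}(T)$ when $\epsilon=+1$ and $f_{A'_\p}(T)=f_{A_\p}(-T)$ when $\epsilon=-1$, proving that $A$ and $A'$ are locally quadratic twists at $\p$.

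There is no real obstacle here; the only point requiring a moment's care is the well-definedness of $\chi(\Frob_\p)$, which, as noted, is automatic from the good reduction hypothesis on $A'$. Everything else is direct manipulation of characteristic polynomials.
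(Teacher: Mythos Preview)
Your argument is correct and follows the same route as the paper: invoke Lemma~\ref{lemma: basic} and pass from $L$-polynomials to Weil polynomials via $f_{A_\p}(T)=T^{2g}L_\p(A,1/T)$. The paper's proof is a one-line reference to this relation; you have simply unpacked it, and in doing so you added the verification (via N\'eron--Ogg--Shafarevich) that $\chi$ is unramified at~$\p$, a point the paper leaves implicit.
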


Throughout this article we will denote by $K/k$ the \emph{endomorphism field of $A$}, that is, the minimal extension $K/k$ such that $\End(A_K)\simeq \End(A_\Qbar)$. It is a finite and Galois extension.

\begin{lemma}\label{lemma: endomorphismfield}
Suppose that \eqref{equation: main condition} holds for $A$ and $A'$. Then $\End^0(A_L)$ and $\End^0(A'_L)$ have the same dimension for every finite extension $L/k$.
In particular, $A$ and $A'$ share the same endomorphism field.
\end{lemma}

\begin{proof}
The lemma follows from the isomorphism
$$
\End(A_L)\otimes_\Z \Q_\ell\simeq (V_\ell(A)\otimes V_\ell(A)^\vee)^{G_L}\simeq (V_\ell(A')\otimes V_\ell(A')^\vee)^{G_L}\simeq \End(A'_L)\otimes_\Z\Q_\ell\,.
$$
The central isomorphism follows from \eqref{equation: main condition} and the Chebotarev density theorem.
\end{proof}

\subsection{Rajan's theorem}\label{section: Rajan first approach}

Let $E_\lambda$ denote a finite extension of $\Q_\ell$, and let $\varrho,\varrho'\colon G_k \rightarrow \GL_r(E_\lambda)$ be continuous, semisimple representations unramified outside a finite set $S\subseteq \Sigma_k$.  Consider the set
$$
\SM(\varrho,\varrho'):=\{\p \in \Sigma_k-S\,|\, \Tr(\varrho(\Frob_\p))=\Tr(\varrho'(\Frob_\p)) \}\,.
$$
With the above notations, Rajan \cite[Thm. 2, part i)]{Raj98} shows the following.

\begin{theorem}[Rajan]\label{theorem: Rajan} Let $F/k$ be a finite extension such that the Zariski closures of $\varrho(G_F)$ and $\varrho'(G_F)$ are connected. If the upper density of $\SM(\varrho|_F,\varrho'|_F)$ is positive, then there exists a finite Galois extension $L/k$ containing $F/k$ such that $\varrho|_L\simeq \varrho'|_L$.
\end{theorem}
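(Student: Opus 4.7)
My plan is to work with the Zariski closure of the combined representation $\tau := \varrho \oplus \varrho'$ on $G_F$ and apply an $\ell$-adic analog of Chebotarev's density theorem, due to Serre, to the locus where the two traces coincide. This converts the hypothesis that $\SM(\varrho|_F,\varrho'|_F)$ has positive upper density into a Zariski-geometric statement about this locus.

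First I would let $\mathbf{H}$ denote the Zariski closure of $\tau(G_F)$ in $(\GL_r\times\GL_r)_{E_\lambda}$, with projections $p_i\colon \mathbf{H}\to \mathbf{G}_i$ onto the Zariski closures of the individual images. By hypothesis, both $\mathbf{G}_i$ are connected; hence the identity component $\mathbf{H}^0$ surjects onto each $\mathbf{G}_i$, and $\tau(G_F)\cap \mathbf{H}^0$ is Zariski dense in $\mathbf{H}^0$. Consider the conjugation-invariant Zariski closed subvariety
$$
V=\{(g_1,g_2)\in \mathbf{H}\mid \Tr(g_1)=\Tr(g_2)\}.
$$
Up to a finite set of exceptions, $\SM(\varrho|_F,\varrho'|_F)$ is precisely $\{\P\in\Sigma_F: \tau(\Frob_\P)\in V(E_\lambda)\}$.

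Next I would invoke Serre's $\ell$-adic equidistribution: Frobenius images equidistribute in the compact group $\tau(G_F)$ with respect to normalized Haar measure (restricted to conjugacy classes), so the density of $\{\P:\tau(\Frob_\P)\in V\}$ equals the Haar measure of $V\cap\tau(G_F)$. A proper Zariski closed subvariety of a connected component of $\mathbf{H}$ has Haar measure zero in that component; thus positive upper density of $\SM(\varrho|_F,\varrho'|_F)$ forces $V$ to contain some nonempty union of connected components of $\mathbf{H}$. After an analysis of $\pi_0(\mathbf{H})$ one extracts a finite extension $F'/F$ with $\tau(G_{F'})$ contained in this union, and hence in $V$. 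Then $\Tr(\varrho(\sigma))=\Tr(\varrho'(\sigma))$ for all $\sigma\in G_{F'}$, and semisimplicity of $\varrho$ and $\varrho'$ combined with the Brauer--Nesbitt theorem yields $\varrho|_{F'}\simeq \varrho'|_{F'}$. Taking $L$ to be the Galois closure of $F'$ over $k$ produces the desired finite Galois extension $L/k\supseteq F$.

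The step I expect to be the main obstacle is the passage from the fact that $V$ contains some connected component of $\mathbf{H}$ to the existence of a finite-index open subgroup $G_{F'}\leq G_F$ with $\tau(G_{F'})\subseteq V$. The collection of components of $\mathbf{H}$ lying in $V$ need not form a subgroup of $\pi_0(\mathbf{H})$, so one must argue, using the compatibility of the trace function with the group structure of $\mathbf{H}$, that translating by any such component preserves the trace-equality condition---or pass to a further, judiciously chosen finite extension---in order to extract a legitimate open subgroup. This is the most delicate part of Rajan's argument; the remaining steps (formation of $V$, invocation of equidistribution, and recovery of the isomorphism via Brauer--Nesbitt) are then standard.
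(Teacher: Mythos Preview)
The paper does not supply its own proof of this theorem; it is quoted verbatim from Rajan \cite[Thm.~2, part i)]{Raj98}, so there is no in-paper argument to compare against. Your outline is, however, a recognisable sketch of Rajan's actual proof, with two points worth sharpening.

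First, the phrase ``Frobenius images equidistribute in the compact group $\tau(G_F)$ with respect to normalized Haar measure'' is not the correct mechanism. What Rajan invokes is Serre's density theorem \cite[Thm.~10]{Ser81}: a closed subset of the $\ell$-adic image whose intersection with each connected component of the Zariski closure $\mathbf{H}$ has strictly smaller dimension carries a set of Frobenii of density~$0$. This yields directly that positive upper density forces $V$ to contain an entire connected component $\varphi\,\mathbf{H}^0$; no Haar-measure computation is needed (or available in this form).

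Second, you have correctly located the crux, but Rajan's resolution is \emph{not} to show that the components lying in $V$ form a subgroup of $\pi_0(\mathbf{H})$. Instead, having a single coset $\varphi\,\mathbf{H}^0\subseteq V$, he decomposes the two algebraic representations of $\mathbf{H}^0$ (obtained via the projections $p_1,p_2$) into irreducibles and uses linear independence of characters on the connected reductive group $\mathbf{H}^0$: the identity $\Tr(\varphi_1\,p_1(h))=\Tr(\varphi_2\,p_2(h))$ for all $h\in\mathbf{H}^0$ forces a matching of irreducible constituents, hence $\varrho|_{L}\simeq\varrho'|_{L}$ for the finite Galois extension $L/k$ with $\tau(G_L)\subseteq\mathbf{H}^0$. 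Your suggested route (showing translation by such a component preserves trace-equality) would also work in principle, but it is not the argument in \cite{Raj98}.
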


Rajan's theorem relates to the local-global QT principle by means of next lemma.

\begin{lemma}\label{lemma: plusminusbasechange}
Suppose that there exists a density 1 subset $\Sigma \subseteq \Sigma_k$ such that for every $\p \in \Sigma$ there exists $\epsilon_\p\in \{\pm 1\}$ such that $\Tr(\varrho'(\Frob_\p))= \epsilon_\p\cdot \Tr(\varrho(\Frob_\p))$. Then for every finite extension $F/k$, the set $\SM(\varrho|_F,\varrho'|_F)$ has a positive upper density.
\end{lemma}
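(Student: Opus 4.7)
The plan is to pass from the ``up to sign'' condition on traces at primes of $k$ to an honest equality of traces at primes of $F$, while losing only a constant factor in density. First I would split $\Sigma$ into two sets: $\Sigma^+$ consisting of those $\p\in \Sigma$ where $\Tr(\varrho'(\Frob_\p))=\Tr(\varrho(\Frob_\p))$, and $\Sigma^-$ consisting of the remaining primes of $\Sigma$, for which $\Tr(\varrho'(\Frob_\p))=-\Tr(\varrho(\Frob_\p))$ (primes at which both traces vanish may be placed in either, say in $\Sigma^+$). Since $\Sigma^+ \cup \Sigma^-=\Sigma$ has density $1$, at least one of $\Sigma^+$, $\Sigma^-$ has upper density $\geq 1/2$.

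The main obstacle is that for a prime $\PP$ of $F$ lying above a prime $\p$ of $k$, the Frobenius $\Frob_\PP$ is only conjugate to $\Frob_\p^{f(\PP/\p)}$ in $G_k$, so a trace equality over $k$ does not automatically transfer to a trace equality over $F$ unless the residue degree $f(\PP/\p)$ equals $1$. To circumvent this, let $\tilde F/k$ denote the Galois closure of $F/k$, and let $S\subseteq \Sigma_k$ be the set of primes of $k$ that split completely in $\tilde F$. By the Chebotarev density theorem, $S$ has positive density $1/[\tilde F:k]$. Since $\Sigma_k\setminus \Sigma$ has density $0$, the set $S\cap \Sigma$ has the same positive density as $S$, and so at least one of $S\cap \Sigma^+$ and $S\cap \Sigma^-$ has positive upper density.

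Without loss of generality assume that $S\cap \Sigma^+$ has positive upper density $\delta>0$ in $\Sigma_k$. For each $\p\in S\cap \Sigma^+$ and every prime $\PP$ of $F$ above $\p$, the residue degree $f(\PP/\p)$ is equal to $1$, so $\Frob_\PP$ (as an element of $G_F$, up to conjugacy) is conjugate in $G_k$ to $\Frob_\p$. Hence
$$\Tr(\varrho|_F(\Frob_\PP))=\Tr(\varrho(\Frob_\p))=\Tr(\varrho'(\Frob_\p))=\Tr(\varrho'|_F(\Frob_\PP)),$$
which shows $\PP\in \SM(\varrho|_F,\varrho'|_F)$. Since each such $\p$ contributes $[F:k]$ distinct primes $\PP$ of $F$, each of norm $N(\PP)=N(\p)$, the prime number theorem for $k$ and $F$ (both $\pi_k(x)$ and $\pi_F(x)$ are asymptotic to $x/\log x$) gives that these primes form a subset of $\SM(\varrho|_F,\varrho'|_F)$ of upper density at least $[F:k]\cdot \delta>0$, completing the argument. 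The symmetric case where $S\cap \Sigma^-$ has positive upper density is handled identically.
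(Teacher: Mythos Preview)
Your argument in the case where $S\cap\Sigma^+$ has positive upper density is correct, but the final sentence is where the proof breaks down: the $\Sigma^-$ case is \emph{not} symmetric to the $\Sigma^+$ case. If $\p\in S\cap\Sigma^-$, then by your own convention $\Tr(\varrho'(\Frob_\p))=-\Tr(\varrho(\Frob_\p))\neq 0$, and lifting to $F$ exactly as you do gives $\Tr(\varrho'|_F(\Frob_\PP))=-\Tr(\varrho|_F(\Frob_\PP))\neq 0$ for each $\PP$ above $\p$. These primes $\PP$ therefore lie in the \emph{complement} of $\SM(\varrho|_F,\varrho'|_F)$, not in it. Your argument does not exclude the possibility that $S\cap\Sigma^+$ has upper density $0$, and in that situation you have produced no element of $\SM(\varrho|_F,\varrho'|_F)$ at all.

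The paper closes this gap by a different mechanism. From the hypothesis one has $\Tr(\varrho)^2=\Tr(\varrho')^2$ on a density~$1$ set of Frobenii, hence by the Chebotarev density theorem $\varrho\otimes\varrho$ and $\varrho'\otimes\varrho'$ have the same trace on all of $G_k$; in particular $\Tr(\varrho|_F)^2=\Tr(\varrho'|_F)^2$ identically on $G_F$. Thus for \emph{every} unramified prime $\PP$ of $F$ one has $\Tr(\varrho'|_F(\Frob_\PP))=\pm\Tr(\varrho|_F(\Frob_\PP))$. Now if $\SM(\varrho|_F,\varrho'|_F)$ had upper density $0$, the sign would be $-1$ on a density~$1$ set, whence $\Tr(\varrho|_F)=-\Tr(\varrho'|_F)$ everywhere by Chebotarev again; evaluating at the identity yields $r=-r$, a contradiction. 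The tensor-square step, which propagates the up-to-sign condition from $k$ to $F$ without any loss of density, is exactly the idea missing from your approach.
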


\begin{proof}
We first claim that there exists a density 1 subset $\Sigma' \subseteq \Sigma_F$ such that for every $\p \in \Sigma'$ there exists $\chi_\p\in \{\pm 1\}$ such that $\Tr(\varrho'(\Frob_\p))= \chi_\p\cdot \Tr(\varrho(\Frob_\p))$. 
Indeed, the hypothesis implies that $\varrho\otimes \varrho \simeq \varrho'\otimes \varrho'$. In particular, we have $\varrho\otimes \varrho|_F \simeq \varrho'\otimes \varrho'|_F$. Hence the claim.  
But this implies that, if the lemma were false, then $\Tr(\varrho(\Frob_\p))=-\Tr(\varrho'(\Frob_\p))$ for all $\p$ in a density $1$ subset of $\Sigma_F$. The Chebotarev density theorem would then imply that $\Tr(\varrho|_F)=-\Tr(\varrho'|_F)$, which contradicts $\Tr(\varrho(\id))=\Tr(\varrho'(\id))$.
\end{proof}

Rajan's theorem has the following consequence (see \cite[Thm. 2, part iii)]{Raj98}).

\begin{corollary}[Rajan]\label{corollary: Rajan}
Suppose that the hypotheses of Theorem \ref{theorem: Rajan} hold, and that moreover~$\varrho$ is strongly absolutely irreducible. Then there exists a finite Galois extension $L/k$ and a character $\chi \colon \Gal(L/k)\rightarrow \overline \Q^\times$ such that $\varrho'\simeq \chi\otimes \varrho$.
\end{corollary}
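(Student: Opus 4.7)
The plan is to bootstrap from Theorem~\ref{theorem: Rajan}, which already yields a finite Galois extension $L/k$ containing $F/k$ with $\varrho|_L\simeq \varrho'|_L$. What remains is to convert this $G_L$-isomorphism, using the strong absolute irreducibility of $\varrho$, into an intertwining relation over the whole of $G_k$ mediated by a single character of $\Gal(L/k)$. The key mechanism is Schur's lemma applied along the normal subgroup $G_L\trianglelefteq G_k$.

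After extending scalars to $\overline{E_\lambda}$, I would fix a nonzero $G_L$-equivariant map $\phi\colon \varrho|_L\to \varrho'|_L$. Strong absolute irreducibility of $\varrho$ ensures that $\varrho|_L$ is absolutely irreducible, so Schur's lemma makes the space $\Hom_{G_L}(\varrho|_L,\varrho'|_L)$ one-dimensional over $\overline{E_\lambda}$; in particular every $G_L$-equivariant map between $\varrho|_L$ and $\varrho'|_L$ is a scalar multiple of $\phi$. For each $g\in G_k$ I would then form the conjugate $\phi_g:=\varrho'(g)^{-1}\phi\,\varrho(g)$. A direct calculation, exploiting both the normality of $G_L$ in $G_k$ and the $G_L$-equivariance of $\phi$, shows that $\phi_g$ is again $G_L$-equivariant, and therefore $\phi_g=\chi(g)\phi$ for a unique scalar $\chi(g)\in \overline{E_\lambda}^\times$.

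It is then formal that $\chi\colon G_k\to \overline{E_\lambda}^\times$ is a continuous multiplicative character, and that it is trivial on $G_L$ (because $\phi$ itself is $G_L$-equivariant), so it descends to a character of the finite group $\Gal(L/k)$; in particular its values are roots of unity, allowing us to view $\chi$ as valued in $\overline{\Q}^\times$. Rewriting the defining identity as $\phi\,\varrho(g)=(\chi(g)\varrho'(g))\phi$ exhibits $\phi$ as a $G_k$-equivariant isomorphism $\varrho\xrightarrow{\sim}\chi\otimes \varrho'$; replacing $\chi$ by $\chi^{-1}$ (which is again a character of $\Gal(L/k)$) yields the claimed isomorphism $\varrho'\simeq \chi\otimes \varrho$.

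Since the heavy input is already Theorem~\ref{theorem: Rajan}, there is no serious obstacle. The only step requiring a little care is verifying that $\phi_g$ is $G_L$-equivariant, which is purely a consequence of $G_L\trianglelefteq G_k$ together with the $G_L$-equivariance of $\phi$. Everything else (multiplicativity, continuity, triviality on $G_L$) is routine, and the absolute irreducibility hypothesis is precisely what guarantees that the one-dimensional Schur quotient produces an honest character rather than a higher-rank cocycle.
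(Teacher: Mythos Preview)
Your proof is correct and follows essentially the same approach as the paper: both obtain $L$ from Theorem~\ref{theorem: Rajan}, then extract $\chi$ as the character by which $\Gal(L/k)$ acts on the one-dimensional space $\Hom_{G_L}(\varrho,\varrho')$, using Schur's lemma via the strong absolute irreducibility of $\varrho$. The only cosmetic difference is that you make the action explicit by conjugating a fixed intertwiner $\phi$, whereas the paper phrases it abstractly as the natural $\Gal(L/k)$-action on $(\varrho'\otimes\varrho^\vee)^{G_L}$ and then concludes via a nonvanishing $\Hom_{G_k}$; your version is arguably cleaner since it avoids the final appeal to semisimplicity of $\varrho'$.
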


\begin{proof}
By Theorem \ref{theorem: Rajan} there exists a finite Galois extension $L/k$ such that $\varrho|_L\simeq \varrho'|_L$. Since $\varrho$ is strongly absolutely irreducible, Schur's lemma shows that the space
$$
\Hom_{G_L}(\varrho',\varrho)\simeq (\varrho'\otimes \varrho^\vee)^ {G_L}
$$
is $1$-dimensional. Let $\chi$ denote the character of the action of $\Gal(L/k)$ on this space. Then
$$
\Hom_{G_k}(\varrho',\chi\otimes \varrho)\simeq \Hom_{G_k}(\varrho'\otimes \varrho^\vee, (\varrho'\otimes \varrho^\vee)^ {G_L})\not =0\,.
$$
Since $\chi\otimes \varrho$ is irreducible and has the same dimension as $\varrho'$, we have $ \varrho'\simeq \chi\otimes \varrho$. 
\end{proof}

We will apply Theorem \ref{theorem: Rajan} when~$\varrho$ and $\varrho'$ are the $\ell$-adic representations attached to $A$ and $A'$. The following is a consequence of Rajan's theorem.

\begin{corollary}\label{corollary: locquad geomiso}
Suppose that \eqref{equation: main condition} holds for $A$ and $A'$. Then there exists a finite Galois extension $L/k$ such that $A_L$ and $A'_L$ are isogenous. 
\end{corollary}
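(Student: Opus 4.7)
The plan is to combine Lemma~\ref{lemma: plusminusbasechange} with Rajan's theorem (Theorem~\ref{theorem: Rajan}), and then conclude via Faltings isogeny theorem. Set $\varrho=\varrho_{A,\ell}$ and $\varrho'=\varrho_{A',\ell}$; both are continuous semisimple representations of $G_k$ on $2g$-dimensional $\Q_\ell$-vector spaces, unramified outside a finite set.

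First I would translate condition~\eqref{equation: main condition} into a statement about Frobenius traces. Since $L_\p(A,T)$ is the reverse Weil polynomial of $A_\p$ (as recalled in the proof of Lemma~\ref{lemma: globaltolocal}), the equality $f_{A_\p}(T)=f_{A'_\p}(\pm T)$ for almost all $\p$ implies that for almost all $\p$ one has
\[
\Tr(\varrho'(\Frob_\p))=\epsilon_\p\cdot\Tr(\varrho(\Frob_\p))
\]
for some $\epsilon_\p\in\{\pm 1\}$. This puts us exactly in the setting of Lemma~\ref{lemma: plusminusbasechange}, so for every finite extension $F/k$ the set $\SM(\varrho|_F,\varrho'|_F)$ has positive upper density.

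Next I would choose $F/k$ so that Rajan's theorem applies. A standard result of Serre ensures that there is a finite extension $F/k$ over which the Zariski closures of the images $\varrho(G_F)$ and $\varrho'(G_F)$ are both connected (one may take $F$ to be, say, the compositum of the appropriate connected-monodromy fields attached to $\varrho$ and $\varrho'$, which contains the endomorphism fields of $A$ and $A'$). With this choice, Theorem~\ref{theorem: Rajan} produces a finite Galois extension $L/k$ containing $F$ such that
\[
\varrho|_L\simeq\varrho'|_L
\]
as $\Q_\ell[G_L]$-modules. Faltings isogeny theorem then implies that $A_L$ and $A'_L$ are isogenous, which is the desired conclusion.

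The only mildly delicate step is ensuring the hypotheses of Rajan's theorem hold; but the existence of a finite extension $F/k$ over which the Zariski closure of the image of an $\ell$-adic Galois representation becomes connected is classical, so this step is essentially bookkeeping. The remainder is a direct assembly of Lemma~\ref{lemma: plusminusbasechange}, Theorem~\ref{theorem: Rajan}, and Faltings.
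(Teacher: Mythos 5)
Your argument is correct and is essentially identical to the paper's own proof: both pass from condition \eqref{equation: main condition} to traces agreeing up to sign, invoke Lemma~\ref{lemma: plusminusbasechange} to verify the density hypothesis of Theorem~\ref{theorem: Rajan} over a finite extension $F/k$ where the monodromy groups are connected, and finish with Faltings isogeny theorem. No issues.
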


\begin{proof}
There exists a finite extension $F/k$ such that the Zariski closures of $\varrho_{A,\ell}(G_F)$ and $\varrho_{A',\ell}(G_F)$ are connected. By Lemma~\ref{lemma: plusminusbasechange}, the representations $\varrho_{A,\ell}|_F$ and $\varrho_{A',\ell}|_F$ satisfy the hypotheses of Theorem~\ref{theorem: Rajan}. Hence, there exists a finite Galois extension $L/k$ containing $F/k$ such that $\varrho_{A,\ell}|_L\simeq \varrho_{A',\ell}|_L$, and so $A_L$ and $A'_L$ are isogenous.
\end{proof}

We now use the above corollaries to show that the local-global QT principle holds for abelian varieties with trivial geometric endomorphism ring.

\begin{corollary}\label{corollary: trivend}
Suppose that $\End(A_\Qbar)\simeq \Z$ and that \eqref{equation: main condition} holds for $A$ and $A'$. Then $A$ and $A'$ are quadratic twists. 
\end{corollary}

\begin{proof}
By Faltings isogeny theorem, for every finite extension $L/k$ we have an isomorphism
$$
\End_{G_L}(V_\ell(A))\otimes_{\Q_\ell} \Qbar_\ell \simeq \End(A_L)\otimes_{\Z}\Qbar_\ell\simeq \Qbar_\ell\,,
$$
which implies that $\varrho_{A,\ell}$ is strongly absolutely irreducible. By Corollary \ref{corollary: Rajan}, there exists a finite Galois extension $L/k$ and a character $\chi$ of $\Gal(L/k)$ such that $\varrho_{A',\ell}\simeq \chi \otimes \varrho_{A,\ell}$. Again by Faltings isogeny theorem, there is an isomorphism 
$$
 \Hom_{G_L}(V_\ell(A),V_\ell(A')) \simeq \Hom(A_L,A'_L)\otimes_{\Z}\Q_\ell\simeq\Q_\ell(\chi)\,,
$$
where $\Q_\ell(\chi)$ means $\Q_\ell$ equipped with the action of $\Gal(L/k)$ via $\chi$. Hence $\Gal(L/k)$ acts on $\Hom(A_L,A'_L)\simeq \Z$ via $\chi$, and thus $\chi$ must be quadratic.  
\end{proof}

\subsection{The connection with Sato--Tate groups}

Throughout this section suppose that $A$ and $A'$ have dimension $g \leq 3$. The Sato--Tate group of $A$, denoted $\ST(A)$, is a closed real Lie subgroup of $\USp(2g)$, only defined up to conjugacy. It captures important arithmetic information of $A$ and it is conjectured to predict the limiting distribution of the Frobenius elements attached to~$A$. See \cite{BK15} for its definition in our context; see \cite{Ser12} for a conditional definition in a more general context.
Let us denote by $\ST(A)^0$ the connected component of the identity in $\ST(A)$, and by $\pi_0(\ST(A))$ the group of components of $\ST(A)$.

\begin{lemma}\label{lemma: ST group}
Suppose that \eqref{equation: main condition} holds for $A$ and $A'$. Then 
$$
\pi_0(\ST(A))\simeq \pi_0(\ST(A'))\qquad \text{and}\qquad \ST(A)^0\simeq \ST(A')^0\,.
$$
\end{lemma}

\begin{proof}
By Lemma \ref{lemma: endomorphismfield},~$A$ and~$A'$ have the same endomorphism field $K/k$, and by \cite[Prop. 2.17]{FKRS12}, there is an isomorphism $\pi_0(\ST(A))\simeq \Gal(K/k)$. The second isomorphism follows from Corollary \ref{corollary: Rajan} and \cite[Rem. 3.2]{BK15}.
\end{proof}

For every $\p\in\Sigma_k$ of good reduction for $A$, one defines a semisimple conjugacy class~$s_\p$ of $\ST(A)$ as in \cite[Def. 2.9]{FKRS12}. The Sato--Tate conjecture for~$A$ is the prediction that the sequence $\{s_\p\}_\p$, where the indexing set of primes is ordered by norm, is equidistributed with respect to the projection of the Haar measure of $\ST(A)$ on its set of conjugacy classes. The characteristic polynomial $\sum_ia_iT^i\in \R[T]$ of an element of $\USp(2g)$ is monic and palindromic, and it is hence determined by the $g$-tuple $\ba=(a_1,\dots,a_g)$. If we denote by $X_g$ the set of $g$-tuples obtained in this way, then there is a bijection $\Conj(\USp(2g))\simeq X_g$. Let $\ba_\p$ denote the image of $s_\p$ under the map
$$
\Conj(\ST(A))\rightarrow \Conj(\USp(2g))\simeq X_g\,.
$$
We will denote by $\mu$ the projection on $X_g$, via the above map, of the Haar measure of $\ST(A)$. Define similarly $s'_\p$, $\ba_\p'$ and $\mu'$. 

Let $\pi(x)$ denote the number of primes $\p$ in $\Sigma_k$ of good reduction for $A$ such that $\Nm(\p)\leq x$. The Sato--Tate conjecture implies that 
\begin{equation}\label{equation: ST conjecture}
\int_{\ba \in X_g}f(\ba)\mu =\lim_{x\rightarrow \infty}\frac{1}{\pi(x)}\sum_{\Nm(\p)\leq x}f(\ba_\p)
\end{equation}
for every $\C$-valued continuous function $f$ on $X_g$.

\begin{lemma}\label{lemma: same ST groups under ST}
Suppose that \eqref{equation: main condition} holds for the abelian varieties $A$ and $A'$ of dimension $g \leq 3$. If the Sato--Tate conjecture holds for $A$ and $A'$, then the Sato--Tate groups of $A$ and $A'$ coincide.
\end{lemma}

\begin{proof}
Given $\ba \in X_g$ and a $g$-tupe of nonnegative integer numbers $\be=(e_1,\dots,e_g)$, let us write $\ba^\be$ to denote $a_1^{e_1}\cdots a_g^{e_g}$. 
By \cite[Prop. 6.16]{FKS21c} and Lemma \ref{lemma: ST group}, it suffices to show that 
\begin{equation}\label{equation: moments}
\int_{\ba \in X_g}\ba^\be \mu = \int_{\ba \in X_g}\ba^\be\mu'
\end{equation}
for every $\be$. Let $w(\be)$ denote $\sum_i ie_i$. Since $-1\in \ST(A)$, we have that both members of \eqref{equation: moments} are $0$ if $w(\be)$ is odd. Suppose from now one that $w(\be)$ is even. If $A$ and $A'$ are locally quadratic twists at $\p$, then $s'_\p=\pm s_\p$, and hence ${\ba'}_{\p}^\be=\ba_{\p}^{\be}$. Then \eqref{equation: moments} follows from \eqref{equation: ST conjecture}.
\end{proof}

\begin{remark}
Lemma~\ref{lemma: same ST groups under ST} will not be used in the sequel. Since the Sato-Tate group of an abelian variety is invariant under quadratic twist, Lemma~\ref{lemma: same ST groups under ST} can be recovered from Theorem~\ref{theorem: Main} (without assuming the Sato--Tate conjecture).
\end{remark}

\subsection{Consequences of the classification of Sato--Tate groups} 

The next two results represent partial progress toward Theorem \ref{theorem: Main} and will be used to simplify some proofs in subsequent sections. We use the notations for Sato--Tate groups introduced in \cite{FKRS12} and \cite{FKS21c}, and present in the data base \cite{LMFDB}.

\begin{proposition}\label{proposition: iftwistquadratic}
Suppose that \eqref{equation: main condition} holds for the abelian surfaces $A$ and $A'$,~and:
\begin{enumerate}[i)] 
\item The Sato--Tate group of $A$ is distinct from $\stgroup[C_{4,1}]{1.4.F.4.1a}$, $\stgroup[F_{ac}]{1.4.D.4.1a}$.
\item There exists a character $\chi$ of $G_k$ such that $\varrho_{A',\ell}\simeq \chi \otimes \varrho_{A,\ell}$. 
\end{enumerate}
Then $A$ and $A'$ are quadratic twists. 
\end{proposition}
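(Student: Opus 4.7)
By Lemma~\ref{lemma: basic}, it suffices to show that the character $\chi$ furnished by hypothesis~(iii) is itself quadratic. The strategy has two parts. First I use a determinant computation to force $\chi$ to have small order; then I use~(i) to show that if $\chi$ is not quadratic, then the Frobenius traces of $A$ must vanish too often — ``too often'' being measurable against the Sato--Tate group, which by hypothesis~(ii) is constrained.

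For the first part: both $\varrho_{A,\ell}$ and $\varrho_{A',\ell}$ have determinant equal to the $g$th power of the cyclotomic character (with $g = 2$), as this holds for any abelian variety of dimension $g$. Taking determinants of the isomorphism in~(iii) gives $\chi^{2g} = \chi^4 = 1$, so $\chi$ has order dividing~$4$. We may assume $\chi$ has order exactly~$4$, for otherwise $\chi$ is already quadratic. Taking traces in~(iii) at an unramified prime yields $a_\p(A') = \chi(\Frob_\p)\, a_\p(A)$, while~(i) forces $a_\p(A') = \pm\, a_\p(A)$ for almost every~$\p$. Hence for almost all $\p$ with $\chi(\Frob_\p) \in \{\pm i\}$ — equivalently $\chi^2(\Frob_\p) = -1$ — we must have $a_\p(A) = 0$. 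Because $\chi^2$ is a nontrivial quadratic character, the Chebotarev density theorem shows that the set of primes with $\chi^2(\Frob_\p) = -1$ has density exactly~$1/2$; therefore $\{\p \in \Sigma_k : a_\p(A) = 0\}$ has density at least~$1/2$.

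For the second part, this density lower bound must be confronted with the Sato--Tate group of~$A$. I would invoke the unconditional $\ell$-adic equidistribution results of Serre \cite{Ser81} applied to $\varrho_{A,\ell}$ (so as not to rely on the Sato--Tate conjecture): the density of primes with $a_\p(A) = 0$ equals the normalized Haar measure of the subset $\{s \in \ST(A) : \Tr(s) = 0\}$. A pass through the classification of Sato--Tate groups of abelian surfaces, as tabulated in \cite{FKRS12} and \cite{LMFDB}, then shows that this measure reaches~$1/2$ exactly for the two groups $C_{4,1}$ and $F_{ac}$, and is strictly smaller for every other group in the classification. This contradicts~(ii), so $\chi$ must have order at most~$2$, and Lemma~\ref{lemma: basic} finishes the proof.

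The main obstacle is the final case analysis: verifying, component by component across the classification of Sato--Tate groups of abelian surfaces, that the trace-zero locus has Haar measure strictly less than $1/2$ outside of $C_{4,1}$ and $F_{ac}$. This is a finite but essentially case-by-case check, and it is the sole place in the argument where the classification in dimension $g=2$ is genuinely used; it is precisely why these two groups appear as exceptions in the statement.
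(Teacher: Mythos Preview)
Your overall strategy matches the paper's: compare determinants to get $\chi^4=1$, assume $\chi$ has exact order~$4$, and derive a contradiction via the Sato--Tate classification. The gap is in the final step. You extract from (i) and (iii) only that the trace $a_\p(A)$ vanishes whenever $\chi(\Frob_\p)=\pm i$, and then assert that the trace-zero density $z_1$ reaches $1/2$ only for $C_{4,1}$ and $F_{ac}$. This assertion is false. Take for instance $A=E\times E_\psi$ with $E$ a non-CM elliptic curve and $\psi$ a nontrivial quadratic character: then $a_\p(A)=(1+\psi(\Frob_\p))\,a_\p(E)$ vanishes on the half of primes where $\psi(\Frob_\p)=-1$, so $z_1=1/2$, yet the Sato--Tate group lies in the \Eb~family, not in $\{C_{4,1},F_{ac}\}$. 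Likewise $F_{ab}$ has $z_1=1/2$, since there $V_\ell(A)$ is a sum of inductions from the quadratic endomorphism field and the trace vanishes at every inert prime. A glance at the $z_1$ column of \cite[Table~8]{FKRS12} reveals many further groups with $z_1\ge 1/2$.

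The paper avoids this by using the full strength of condition~(i), not merely the trace. Combining $L_\p(A',T)=L_\p(A,\chi(\Frob_\p)T)$ from (iii) with $L_\p(A',T)=L_\p(A,\pm T)$ from (i) when $\chi(\Frob_\p)=\pm i$, and comparing $T^2$-coefficients, gives $-b_\p=b_\p$, so the coefficient $b_\p$ of $T^2$ also vanishes. The relevant invariant is then $z_{2,0}$, the density of primes with $b_\p=0$; by \cite[Thm.~3]{Saw16} this equals the proportion of components on which the $T^2$-coefficient vanishes identically, and \cite[Table~8]{FKRS12} shows $z_{2,0}\ge 1/2$ only for $C_{4,1}$ and $F_{ac}$. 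In the example $A=E\times E_\psi$, for instance, primes in the nonidentity component have $b_\p=2p-a_\p(E)^2$, not identically zero, so $z_{2,0}=0$ there. The missing idea is simply to look one coefficient deeper in the $L$-polynomial.
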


\begin{proof}
By comparing determinants of $\varrho_{A,\ell}$ and $\varrho_{A',\ell}$ we see that $\chi^{4}=1$. It will suffice to prove that in fact $\chi^2=1$. Suppose that the order of $\chi$ were $4$. Let $\Sigma\subseteq \Sigma_k$ denote the density 1 set of primes~$\p$ of good reduction for $A$ and $A'$, of absolute residue degree $1$, and such that $A$ and $A'$ are locally quadratic twists at~$\p$. 
Define $z_{2,0}$ as the upper density of the set of primes $\p \in \Sigma$ such that the coefficient $b_\p$ of $T^2$ in $L_\p(A,T)$ is~$0$. 
For every $\p \in \Sigma$ such that $\chi(\Frob_\p)$ has order $4$, condition $ii)$ implies that $b_\p$ is zero. In particular, we see that $z_{2,0}\geq 1/2$. 
It is proven in the course of \cite[Thm. 3]{Saw16} that $z_{2,0}$ is the proportion of connected components $C$ of the Sato--Tate group of $A$ such that, for all $\gamma\in C$, the coefficient of $T^2$ in the characteristic polynomial of~$\gamma$ is $0$. Interpreted as this proportion, the quantity $z_{2,0}$ can be read from \cite[Table 8]{FKRS12} (see columns `$c$' and `$z_2$'; note that $z_{2,0}$ is the central term of $z_2$). By inspection of the table, one finds that $\stgroup[C_{4,1}]{1.4.F.4.1a}$ and $\stgroup[F_{ac}]{1.4.D.4.1a}$ are the only two Sato--Tate groups of abelian surfaces for which $z_{2,0}\geq 1/2$.
\end{proof}

The next lemma accounts for cases uncovered by the previous proposition. For its proof it is convenient to introduce the following notion. For a finite Galois extension $F/k$, $A$ will be said to have \emph{Frobenius traces concentrated in $F$} if $\Tr(\varrho_{A,\ell}(\Frob_\p))=0$ for every $\p\in \Sigma_k$ of good reduction for $A$ which does not split completely in $F/k$.

\begin{lemma}\label{lemma: Cnextensions}
Let $A$ and $A'$ be abelian surfaces defined over $k$. Let $K/k$ denote the endomorphism field of $A$. Suppose that:
\begin{enumerate}[i)]
\item The Sato--Tate group of $A$ is $\stgroup[C_{4,1}]{1.4.F.4.1a}$, $\stgroup[F_{ac}]{1.4.D.4.1a}$.
\item There is a character $\psi$ of $G_k$ such that $\psi|_K$ is quadratic and $\varrho_{A',\ell}\simeq \psi \otimes \varrho_{A,\ell}$. 
\end{enumerate} 
Then $A$ and $A'$ are quadratic twists.
\end{lemma}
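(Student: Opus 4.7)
The plan is to invoke Lemma~\ref{lemma: basic} and reduce to exhibiting a \emph{quadratic} character $\chi$ of~$G_k$ with $\varrho_{A',\ell}\simeq \chi\otimes\varrho_{A,\ell}$. Such $\chi$ will be obtained by multiplying $\psi$ by a suitable inner-twist character of $\varrho_{A,\ell}$, that is, a character $\alpha$ of $G_k$ with $\alpha\otimes \varrho_{A,\ell}\simeq \varrho_{A,\ell}$.

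Comparing determinants on both sides of~$ii)$ gives $\psi^4=1$, so $\psi$ has order $1$, $2$, or $4$. If $\psi^2=1$ we take $\chi:=\psi$, so assume $\psi$ has order exactly $4$. The hypothesis that $\psi|_K$ is quadratic then forces $\psi^2$ to be a nontrivial character of $G_k$ trivial on $G_K$, hence to inflate from a nontrivial quadratic character of $\Gal(K/k)$. By the classification of Sato--Tate groups of abelian surfaces in \cite[\S 4, Table 8]{FKRS12}, both $C_{4,1}$ and $F_{ac}$ have component group cyclic of order~$4$, so \cite[Prop.~2.17]{FKRS12} gives $\Gal(K/k)\simeq C_4$; in particular $\psi^2$ equals the unique nontrivial quadratic character in $\widehat{\Gal(K/k)}$.

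Since the squaring map on $\widehat{C_4}\simeq C_4$ is surjective onto its index-two subgroup of quadratic characters, there exist two characters $\alpha$ of $\Gal(K/k)$ (both of order $4$) with $\alpha^2=\psi^{-2}$. For any such $\alpha$, inflated to $G_k$, the product $\chi:=\psi\alpha$ satisfies $\chi^2=\psi^2\alpha^2=1$, so $\chi$ is quadratic; and $\chi\otimes\varrho_{A,\ell}\simeq \varrho_{A',\ell}$ is equivalent to $\alpha\otimes\varrho_{A,\ell}\simeq \varrho_{A,\ell}$. So it remains to verify this last inner-twist property for at least one of the order-$4$ candidates $\alpha$; by Lemma~\ref{lemma: basic} this will yield the lemma.

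This inner-twist verification is the main obstacle. I would prove the stronger assertion that \emph{every} character of $\Gal(K/k)\simeq C_4$ is an inner twist of $\varrho_{A,\ell}$. By the Brauer--Nesbitt theorem and Chebotarev, this amounts to showing $a_\p(A)=0$ for every prime $\p$ whose Frobenius class lies in a non-identity coset of $\Gal(K/k)$. For $\ST(A)\in\{C_{4,1},F_{ac}\}$, I would establish this by inspecting the explicit matrix realizations of the non-identity components of $\ST(A)$ in $\USp(4)$ (which can be extracted from \cite[Table~8]{FKRS12} in the same spirit as the reading of the $z_{2,0}$ column in the proof of Proposition~\ref{proposition: iftwistquadratic}), and checking that the trace function vanishes identically on each such component. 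Combined with the identification $\pi_0(\ST(A))\simeq \Gal(K/k)$ and the Chebotarev density theorem, this yields $a_\p(A)=0$ whenever $\Frob_\p\not\in G_K$, and hence every $\alpha\in\widehat{\Gal(K/k)}$ is an inner twist of $\varrho_{A,\ell}$. Choosing $\alpha$ of order $4$ with $\alpha^2=\psi^{-2}$ and setting $\chi:=\psi\alpha$ completes the proof via Lemma~\ref{lemma: basic}.
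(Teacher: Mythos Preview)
Your proof is correct and follows essentially the same approach as the paper: both arguments hinge on the ``key property'' that $\Tr(\varrho_{A,\ell}(\Frob_\p))=0$ whenever $\Frob_\p\notin G_K$ (equivalently, every character of $\Gal(K/k)\simeq\cyc4$ is an inner twist of $\varrho_{A,\ell}$), and then replace $\psi$ by a quadratic character differing from it by a character of $\Gal(K/k)$. The only cosmetic difference is that you locate this quadratic character directly via character theory (picking $\alpha\in\widehat{\cyc4}$ with $\alpha^2=\psi^{-2}$), whereas the paper does so by a field-theoretic analysis of $\Gal(EK/k)\simeq\cyc2\times\cyc4$, where $E/k$ is the extension cut out by $\psi$.
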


\begin{proof} 
Let $E/k$ be the cyclic extension cut out by $\psi$. By hypothesis~$ii)$, we have that $[E:E\cap K]\leq 2$. Hypothesis~$i)$ implies that $A$ has Frobenius traces concentrated in $K$. If $E\subseteq K$, then this implies that 
\begin{equation}\label{equation: kisogenous}
\varrho_{A',\ell}\simeq \psi\otimes \varrho_{A,\ell}\simeq \varrho_{A,\ell}\,,
\end{equation}
and hence $A$ and $A'$ are in fact isogenous. Suppose from now on that $[E:E\cap K]= 2$. 
We may assume that~$\psi$ is quartic as otherwise there is nothing to show. Then $\Gal(EK/k)$ is a group of order 8 with two normal subgroups of order 2 yielding quotients isomorphic to~$\cyc 4$. This uniquely determines the isomorphism class of $\Gal(EK/k)$, which must be $\cyc 2 \times \cyc 4$. In this case, one readily sees that there exists a quadratic subextension $F/k$ of $EK/k$ such that $EK=FK$. 
Let $\chi$ denote the nontrivial character of $\Gal(F/k)$. If follows that $\chi|_K=\psi|_K$ and hence $\psi \otimes \varrho_{A,\ell}\simeq \chi\otimes \varrho_{A,\ell}$
by the fact that $A$ has Frobenius traces concentrated in $K$.
\end{proof}

\section{Variants of Rajan's theorem}\label{section: Rajan}

In this section, relying on the results of \cite{Ser81}, we will prove variants of Corollary~ \ref{corollary: Rajan}, which will later play a crucial role in the proof of Theorem \ref{theorem: Main}. Resume the notations of \S\ref{section: Rajan first approach}.  
The following variant of Corollary \ref{corollary: Rajan} drops the condition that $\varrho$ and $\varrho'$ be strongly absolutely irreducible representations.  

\begin{theorem}\label{theorem: serre}
  Let $\varrho,\varrho'\colon G_k \rightarrow \GL_r(E_\lambda)$ be continuous, semisimple representations unramified outside a finite set $S\subseteq \Sigma_k$. Suppose that there exists a density~$1$ subset $\Sigma \subseteq \Sigma_k$ such that for every $\p \in \Sigma$ there exists $\epsilon_\p\in \{\pm 1\}$ such that
  \begin{equation}\label{equation: tracesuptosign}
  \Tr(\varrho'(\Frob_\p))= \epsilon_\p\cdot \Tr(\varrho(\Frob_\p))\,.
  \end{equation}
Suppose moreover that  
$$
\varrho\simeq \varrho_1\oplus \varrho_2\,,\qquad \varrho'\simeq \varrho_1'\oplus \varrho'_2\,,\qquad \varrho_i,\varrho_i':G_k\rightarrow \GL_{r_i}(E_\lambda)\,,
$$
where $\varrho_1$ is strongly absolutely irreducible, the Zariski closure of its image is connected, and satisfies
\begin{equation}\label{equation: no interaction}
\Hom_{G_F}(\varrho_1,\varrho_2)=\Hom_{G_F}(\varrho_1,\varrho'_2)=0
\end{equation} 
for every finite extension $F/k$. Then there exists a quadratic character $\chi$ of $G_k$ such that $\varrho'\simeq \chi\otimes \varrho$.
\end{theorem}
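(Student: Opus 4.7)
My plan is to combine Rajan's Theorem~\ref{theorem: Rajan} with a Schur's lemma argument to isolate a character $\chi$ relating $\varrho_1$ and $\varrho_1'$, and then use the $\pm$-trace hypothesis together with results of Serre~\cite{Ser81} to show that $\chi$ is quadratic and simultaneously twists $\varrho$ into $\varrho'$.

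First, Lemma~\ref{lemma: plusminusbasechange} ensures that $\SM(\varrho|_F,\varrho'|_F)$ has positive upper density for every finite extension $F/k$. Choose $F/k$ so that the Zariski closures of $\varrho(G_F)$ and $\varrho'(G_F)$ are connected. By Theorem~\ref{theorem: Rajan}, there exists a finite Galois extension $L/k$ containing $F$ with $\varrho|_L\simeq\varrho'|_L$. Since $\varrho_1|_L$ is absolutely irreducible and, by hypothesis~\eqref{equation: no interaction} applied with $F=L$, does not appear as a subrepresentation of $\varrho_2|_L$ or $\varrho_2'|_L$, the Krull--Schmidt theorem together with the equality $\dim\varrho_1=\dim\varrho_1'$ forces $\varrho_1|_L\simeq\varrho_1'|_L$ (and hence $\varrho_2|_L\simeq\varrho_2'|_L$). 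Running the argument of Corollary~\ref{corollary: Rajan} on the pair $(\varrho_1,\varrho_1')$ then produces a character $\chi\colon\Gal(L/k)\to\Qbar^\times$ such that $\varrho_1'\simeq\chi\otimes\varrho_1$.

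It remains to show that $\chi^2=1$ and that $\varrho_2'\simeq\chi\otimes\varrho_2$; granted this, one has $\Tr(\chi\otimes\varrho)=\chi\Tr(\varrho)=\Tr(\varrho')$, and the Chebotarev density theorem combined with semisimplicity yields $\varrho'\simeq\chi\otimes\varrho$. Substituting $\Tr(\varrho_1'(\Frob_\p))=\chi(\Frob_\p)\Tr(\varrho_1(\Frob_\p))$ into~\eqref{equation: tracesuptosign} and rearranging gives, for every $\p\in\Sigma$,
\[ \chi(\Frob_\p)\Tr(\varrho_1(\Frob_\p))-\Tr(\varrho_2'(\Frob_\p))=\epsilon_\p\bigl(\Tr(\varrho_1(\Frob_\p))-\Tr(\varrho_2(\Frob_\p))\bigr).\]
Squaring eliminates the sign $\epsilon_\p$, and Chebotarev promotes the resulting trace identity on $\Sigma$ to an isomorphism in the Grothendieck group of semisimple $E_\lambda[G_k]$-modules:
\[ \chi^2\otimes \varrho_1^{\otimes 2}+(\varrho_2')^{\otimes 2}+2\,\varrho_1\otimes\varrho_2 \;\simeq\; \varrho_1^{\otimes 2}+\varrho_2^{\otimes 2}+2\,\chi\otimes\varrho_1\otimes\varrho_2'.\]
The plan is to exploit the connectedness of the Zariski closure of $\varrho_1(G_k)$ and the no-interaction hypothesis to separate $\varrho_1$-isotypic contributions from the rest in this identity: matching the $\varrho_1^{\otimes 2}$-isotypic component forces $\chi^2$ to fix every irreducible summand of $\varrho_1^{\otimes 2}$, whence $\chi^2=1$ by absolute irreducibility of $\varrho_1$; the remaining piece then gives $\chi\otimes\varrho_1\otimes\varrho_2'\simeq \varrho_1\otimes\varrho_2$, from which strong absolute irreducibility of $\varrho_1$ yields $\varrho_2'\simeq\chi\otimes\varrho_2$.

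The main obstacle lies in this separation of isotypic components: it is not automatic that the irreducible constituents of $\varrho_1^{\otimes 2}$ are disjoint from those of $\varrho_2^{\otimes 2}$, $(\varrho_2')^{\otimes 2}$, and $\chi^{\pm 1}\otimes \varrho_1\otimes\varrho_2^{(\prime)}$. However, precisely this kind of disjointness, and in particular the ability to read off the character $\chi^2$ from the first summand alone, is what the technology of Serre~\cite{Ser81} provides when combined with the strong absolute irreducibility and connected Zariski image of $\varrho_1$ together with~\eqref{equation: no interaction} applied after every finite base change; this is the new input beyond Rajan's theorem.
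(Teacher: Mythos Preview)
Your first paragraph is fine and matches the paper: Rajan plus the no-interaction hypothesis gives $\varrho_1|_L\simeq\varrho_1'|_L$, and Schur yields a character $\chi$ with $\varrho_1'\simeq\chi\otimes\varrho_1$.

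The gap is in everything after the squaring. First, even granting your Grothendieck-group identity (in which, incidentally, the cross terms are on the wrong sides), there is no reason for the irreducible constituents of $\varrho_1^{\otimes 2}$ to be disjoint from those of $\varrho_2^{\otimes 2}$, $(\varrho_2')^{\otimes 2}$, or $\varrho_1\otimes\varrho_2^{(\prime)}$. The hypothesis~\eqref{equation: no interaction} controls $\Hom(\varrho_1,\varrho_2)$, not $\Hom(\varrho_1^{\otimes 2},\varrho_2^{\otimes 2})$; for instance if $\varrho_1$ and $\varrho_2$ are both self-dual then $\triv$ occurs in both tensor squares. Second, even if you could isolate $\chi^2\otimes\varrho_1^{\otimes 2}\simeq\varrho_1^{\otimes 2}$, this does not force $\chi^2=1$: taking determinants only gives $\chi^{2r_1^2}=1$. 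Your last paragraph asserts that ``the technology of Serre~\cite{Ser81}'' supplies the missing disjointness, but you never say how, and I do not see that it does.

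The paper's proof avoids squaring altogether, and this is the missing idea. Rewrite~\eqref{equation: tracesuptosign} as
\[
(\chi(\Frob_\p)-\epsilon_\p)\cdot\Tr(\varrho_1(\Frob_\p))-\epsilon_\p\cdot\Tr(\varrho_2(\Frob_\p))+\Tr(\varrho_2'(\Frob_\p))=0.
\]
The coefficients $(x_1,x_2):=(\chi(\Frob_\p)-\epsilon_\p,\,-\epsilon_\p)$ range over a \emph{finite} set. For each choice with $x_1\neq 0$, the relation $x_1\Tr(g_1)+x_2\Tr(g_2)+\Tr(g_2')=0$ cuts out a proper closed subvariety of the Zariski closure of the image of $\varrho_1\times\varrho_2\times\varrho_2'$: it is proper because otherwise the resulting linear dependence among $\Tr\varrho_1,\Tr\varrho_2,\Tr\varrho_2'$ would force a common constituent between $\varrho_1$ and $\varrho_2$ or $\varrho_2'$, contradicting~\eqref{equation: no interaction}; and connectedness of the image of $\varrho_1$ then makes its dimension strictly smaller. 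Now Serre's density results \cite[Thm.~8,~10]{Ser81} say that the Frobenii landing in this finite union of proper subvarieties have density~$0$. Hence $\chi(\Frob_\p)=\epsilon_\p$ on a density-$1$ set, so $\chi$ is quadratic and $\Tr\varrho'=\chi\cdot\Tr\varrho$, which finishes the proof by semisimplicity. The point is that Serre's input is applied to the \emph{linear} trace relation, not to a squared one.
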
  

\begin{proof}
By Theorem \ref{theorem: Rajan}, there exists a finite extension $L/k$ such that $\varrho|_L\simeq \varrho'_L$. By~\eqref{equation: no interaction}, we have $\varrho_i|_L\simeq \varrho_i'|_L$, and then the argument in the proof of Corollary \ref{corollary: Rajan} yields a character $\chi_1$ satisfying $\varrho_1'\simeq \chi_1\otimes \varrho_1$. 
For a $\p\in \Sigma$, we may rewrite \eqref{equation: tracesuptosign} as
$$
(\chi_1(\Frob_\p)-\epsilon_\p)\cdot \Tr(\varrho_1(\Frob_\p))-\epsilon_\p\cdot \Tr(\varrho_2(\Frob_\p))+\Tr(\varrho_2'(\Frob_\p))=0\,.
$$
Let $G_i$ (resp. $G_i'$) denote the Zariski closure of $\varrho_i(G_k)$ (resp. of $\varrho_i'(G_k)$) in $\GL_{r_i}$.  Denote by $X_1$ the finite set $\mathrm{im}(\chi_1)\pm 1 \subseteq \Qbar$. Choose a \emph{nonzero} $x_1$ in $X_1$ and $x_2$ in $\{\pm 1\}$. Let $W_{x_1,x_2}$ denote the subvariety of $G_1\times G_2\times G_2'$ cut by the equation
$$
x_1\cdot \Tr( g_1)+x_2\cdot \Tr(g_2)+\Tr(g_2')=0\qquad \text{for $g_1\in G_1$, $g_2\in G_2$, $g_2'\in G_2'$.}
$$
Note that $W_{x_1,x_2}\cap G_1$ is a closed subvariety of $G_1$. It is properly contained in~$G_1$, as otherwise we would obtain a relation among the traces of $\varrho_1$, $\varrho_2$, and $\varrho_2'$ that would contradict \eqref{equation: no interaction}. Hence, the connectedness of $G_1$ implies that $\dim(W_{x_1,x_2}\cap G_1)<\dim(G_1)$. Let~$H$ be the (finite) union of the closed proper subvarieties $W_{x_1,x_2}\cap G_1$ attached to choices of the $x_1,x_2$ as above.  Then, by \cite[Thm. 10, Thm. 8]{Ser81} the density of the set of primes $\p\in\Sigma$ for which $\varrho_1(\Frob_\p)$ belongs to $H$ is zero. We conclude that for every prime $\p$ in a subset of $\Sigma$, still of density $1$, we have $\chi_1(\Frob_\p)=\epsilon_\p$, which in particular implies that $\chi_1$ is quadratic. Therefore, for every~$\p$ in a density $1$ subset of $\Sigma_k$, we have $\Tr(\varrho'(\Frob_\p))= \chi_1(\Frob_\p)\cdot \Tr(\varrho(\Frob_\p))$, and the theorem follows from the semisimplicity of $\varrho$ and $\varrho'$.
\end{proof}

We will typically apply Theorem~\ref{theorem: serre} when~$\varrho$ and $\varrho'$ are $\varrho_{A,\ell}$ and $\varrho_{A',\ell}$, and $A$ has dimension $\leq 3$ and all endomorphisms defined over~$k$ (in this situation, the Zariski closure of $\varrho(G_k)$ is connected; see \cite[Prop. 2.17]{FKRS12}). 
The following is the most basic application.

\begin{corollary}\label{corollary: ellipticcurves}
Let $A$ and $A'$ be elliptic curves defined over $k$ for which \eqref{equation: main condition} holds. Then $A$ and $A'$ are quadratic twists.
\end{corollary}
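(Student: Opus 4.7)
My plan is to split into three cases according to the structure of $\End(A_\Qbar)$, and to reduce each one to an application of Theorem~\ref{theorem: serre}.

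\emph{Non-CM case.} If $\End(A_\Qbar)\simeq\Z$, then by Faltings' theorem (as in the proof of Corollary~\ref{corollary: trivend}) $\varrho_{A,\ell}$ is strongly absolutely irreducible. The endomorphism field of $A$ is $k$, so the Zariski closure of $\varrho_{A,\ell}(G_k)$ is connected. Theorem~\ref{theorem: serre} therefore applies with $\varrho_1=\varrho_{A,\ell}$ and $\varrho_2=0$, making \eqref{equation: no interaction} vacuous, and it produces a quadratic $\chi$ with $\varrho_{A',\ell}\simeq\chi\otimes\varrho_{A,\ell}$. Lemma~\ref{lemma: basic} concludes.

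\emph{CM defined over $k$.} Suppose $A$ has CM by an imaginary quadratic field $M\subseteq k$; by Lemma~\ref{lemma: endomorphismfield}, $A'$ has the same property. For a sufficiently large finite extension $E_\lambda/\Q_\ell$ containing the images of both embeddings of $M$, one has
\[
\varrho_{A,\ell}\otimes_{\Q_\ell}E_\lambda\simeq \chi_1\oplus\chi_2,\qquad \varrho_{A',\ell}\otimes_{\Q_\ell}E_\lambda\simeq \chi'_1\oplus\chi'_2,
\]
with $\chi_1,\chi_2,\chi'_1,\chi'_2$ characters of $G_k$ and $\chi_1/\chi_2$ (resp.\ $\chi'_1/\chi'_2$) of infinite order. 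By Corollary~\ref{corollary: locquad geomiso}, $\varrho_{A,\ell}|_L\simeq\varrho_{A',\ell}|_L$ for some finite Galois $L/k$; relabeling if necessary so that $\chi_1|_L=\chi'_1|_L$ forces $\chi_1/\chi'_2$ to have infinite order. Theorem~\ref{theorem: serre} now applies with $\varrho_1=\chi_1$, $\varrho_2=\chi_2$: characters are strongly absolutely irreducible, the Zariski closure of the infinite-order character $\chi_1$ is $\mathbb{G}_m$ (hence connected), and \eqref{equation: no interaction} holds by the choice of labeling. It yields a quadratic $\chi$ with $\varrho_{A',\ell}\simeq\chi\otimes\varrho_{A,\ell}$.

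\emph{CM not defined over $k$.} Suppose $M\not\subseteq k$, so $K:=kM$ is a quadratic extension. The condition \eqref{equation: main condition} over $k$ propagates to $K$ by a routine check on the parity of the residue degree of a prime $\p_K$ above $\p$ (the roots of the Weil polynomial at $\p_K$ are the $f$-th powers of those at $\p$). The previous case applied over $K$ gives a quadratic $\chi_K\colon G_K\to\{\pm 1\}$ with $\varrho_{A',\ell}|_K\simeq\chi_K\otimes\varrho_{A,\ell}|_K$. Conjugating by any lift of the nontrivial $\sigma\in\Gal(K/k)$ shows $\chi_K/\chi_K^\sigma$ preserves the decomposition $\chi_1\oplus\chi_2$ of $\varrho_{A,\ell}|_K$, hence equals the trivial character or $\chi_1/\chi_2$; the latter is of infinite order, which is incompatible with $\chi_K,\chi_K^\sigma$ being quadratic, so $\chi_K^\sigma=\chi_K$. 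Thus $\chi_K$ extends to a character $\tilde\chi$ of $G_k$, and a Clifford-theoretic comparison of the two-dimensional absolutely irreducible $G_k$-representations $\tilde\chi\otimes\varrho_{A,\ell}$ and $\varrho_{A',\ell}$, which have the same restriction to $G_K$, yields $\varrho_{A',\ell}\simeq\chi\otimes\varrho_{A,\ell}$ for some $\chi\in\{\tilde\chi,\tilde\chi\epsilon\}$, where $\epsilon$ is the nontrivial character of $\Gal(K/k)$. Comparing determinants (both cyclotomic) forces $\chi^2=1$. The main obstacle throughout is this final descent step, as Theorem~\ref{theorem: serre} only directly delivers a twist over $K$ in this subcase.
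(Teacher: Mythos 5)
Your proof is correct, and it is worth comparing with the paper's, which is much shorter but less self-contained: the paper disposes of the case where $\varrho_{A,\ell}$ is irreducible (no CM defined over $k$) by citing the known result of Serre--Kings--Ramakrishnan, and only argues directly in the reducible case, where it applies Theorem~\ref{theorem: serre} to the decomposition $\varrho_{A,\ell}\simeq\varrho_1\oplus\varrho_2$ exactly as in your second case. Your first case is a repackaging of Corollary~\ref{corollary: trivend} (taking $\varrho_2=0$ in Theorem~\ref{theorem: serre} is legitimate, though you could have just cited that corollary), and your second case coincides with the paper's argument. The genuine novelty is your third case: where the paper invokes Ramakrishnan's theorem for the induced representation $\Ind_K^k(\chi_1)$, you instead base-change to $K=kM$, apply the split-CM case there, and descend. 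The descent is sound: the $\sigma$-invariance of $\chi_K$ follows because $\chi_K/\chi_K^\sigma$ is at most quadratic while $\chi_1/\chi_2$ has infinite order, the extension of a $\sigma$-invariant character across a cyclic (here quadratic) layer is standard, and the Mackey computation $\Ind_K^k\Res_K(\theta)\simeq\theta\oplus(\theta\otimes\epsilon)$ pins $\varrho_{A',\ell}$ down to $\tilde\chi\otimes\varrho_{A,\ell}$ or $\tilde\chi\epsilon\otimes\varrho_{A,\ell}$, with the determinant comparison forcing quadraticity. This descent pattern is in fact the one the paper uses repeatedly in higher dimension (Theorem~\ref{theorem: mild refinement serre}, Lemma~\ref{lemma: greatly simplifying lemma}), and you could have shortened your third case by observing that $\Tr(\varrho_{A,\ell}(\Frob_\p))=0$ for $\p$ inert in $K/k$ and quoting Lemma~\ref{lemma: greatly simplifying lemma} with $N=K$. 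What your route buys is a proof of the $g=1$ case that is independent of Ramakrishnan's theorem; what the paper's route buys is brevity and a pointer to the literature.
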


\begin{proof}
As recalled in \S\ref{section: introduction}, the result is known if $\varrho_{A,\ell}$ is irreducible. If $A$ is an elliptic curve with CM defined over $k$, then $\varrho_{A,\ell}$ satisfies the hypotheses of Theorem \ref{theorem: serre}.
\end{proof}

The following result asserts that the local-global QT principle holds over the endomorphism field under a mild technical hypothesis. 

\begin{theorem}\label{theorem: mild refinement serre}
Suppose that \eqref{equation: main condition} holds for $A$ and $A'$, and that, if $K/k$ denotes the endomorphim field of $A$, then the Zariski closures of $\varrho_{A,\ell}(G_K)$ and $\varrho_{A',\ell}(G_K)$ are connected. Then there exists a quadratic character $\chi$ of $G_K$ such that $A'_K\sim A_{K,\chi}$ and such that the field extension $L/K$ cut out by $\chi$ satisfies that~$L/k$ is Galois.
\end{theorem}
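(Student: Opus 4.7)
The plan is to adapt the proof of Theorem~\ref{theorem: serre} to the setting over $K$. The hypothesis that the Zariski closure of $\varrho_{A,\ell}(G_K)$ is connected ensures that every irreducible $G_K$-subrepresentation of $\varrho_{A,\ell}|_K$ is strongly absolutely irreducible.

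First I would invoke Corollary~\ref{corollary: locquad geomiso} to obtain a finite Galois extension $M/k$, containing $K$ and Galois over $k$, such that $\varrho_{A,\ell}|_M\simeq \varrho_{A',\ell}|_M$. Decomposing both restrictions to $G_K$ into strongly absolutely irreducible constituents and using the Schur argument from the proof of Corollary~\ref{corollary: Rajan}, I would identify
\begin{equation*}
\varrho_{A,\ell}|_K\simeq \bigoplus_i W_i^{n_i},\qquad \varrho_{A',\ell}|_K\simeq \bigoplus_i (\chi_i\otimes W_i)^{n_i},
\end{equation*}
where each $\chi_i$ is a character of $\Gal(M/K)$ (the matching being possible because each irreducible constituent of $\varrho_{A',\ell}|_K$ is a $\Gal(M/K)$-twist of some $W_i$, via the $G_M$-isomorphism).

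Next I would descend the trace-up-to-sign condition to $G_K$ via Lemma~\ref{lemma: plusminusbasechange}, giving
\begin{equation*}
\sum_i n_i\bigl(\chi_i(\Frob_\p)-\epsilon_\p\bigr)\Tr(W_i(\Frob_\p))=0
\end{equation*}
for a density~$1$ set of primes $\p$ of $K$. Since the characters $\Tr(W_i)$ of the pairwise non-isomorphic strongly absolutely irreducible representations $W_i$ of the connected reductive Zariski closure $G$ of $\varrho_{A,\ell}(G_K)$ are linearly independent as regular functions on $G$, I would run the argument from the proof of Theorem~\ref{theorem: serre} verbatim: for each $\epsilon\in\{\pm 1\}$ and each choice of values $x_i\in\mathrm{im}(\chi_i)$ with $x_i\neq \epsilon$ for some $i$, the equation $\sum_i n_i(x_i-\epsilon)\Tr(W_i(g))=0$ cuts out a proper closed subvariety of $G$; by \cite[Thm.~8, Thm.~10]{Ser81}, the density of primes $\p$ with $\varrho_{A,\ell}(\Frob_\p)$ in any such subvariety is zero. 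Therefore, for a density~$1$ set of primes, $\chi_i(\Frob_\p)=\epsilon_\p$ for every $i$, which by the Chebotarev density theorem forces all the $\chi_i$ to coincide with a common quadratic character $\chi$ of $\Gal(M/K)$. Hence $\varrho_{A',\ell}|_K\simeq \chi\otimes \varrho_{A,\ell}|_K$ and $A'_K\sim A_{K,\chi}$ by Lemma~\ref{lemma: basic}.

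The Galoisness of $L/k$ then follows from $G_k$-equivariance: for any $\sigma\in G_k$, the conjugate $\chi^\sigma$ is another quadratic character of $G_K$ satisfying $\varrho_{A',\ell}|_K\simeq \chi^\sigma\otimes \varrho_{A,\ell}|_K$ (because both $\varrho_{A,\ell}$ and $\varrho_{A',\ell}$ are restrictions of $G_k$-representations), and the uniqueness of the twisting quadratic character forces $\chi^\sigma=\chi$. The main obstacle I anticipate is the factor-by-factor matching in the first step when distinct $W_i$'s become isomorphic after restriction to $G_M$: the per-$i$ character $\chi_i$ is then not individually well-defined, and the Serre-style trace argument alone may not suffice to extract a common quadratic character, so one will likely need to exploit the full Weil polynomial condition \eqref{equation: main condition} (for instance, the identity $\wedge^2\varrho_{A,\ell}|_K\simeq \wedge^2\varrho_{A',\ell}|_K$) to rule out twists by non-quadratic characters.
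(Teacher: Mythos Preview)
Your approach coincides with the paper's: decompose over $K$ into strongly absolutely irreducible summands, match them via character twists using the $G_M$-isomorphism, and run the Serre density argument to force a single quadratic $\chi$. Two points deserve comment.

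The obstacle you flag does not arise. Connectedness of the Zariski closure of $\varrho_{A,\ell}(G_K)$ guarantees that distinct irreducible summands $W_i$ remain non-isomorphic over every finite extension of $K$: for any finite $F/K$ the image of $G_F$ is still Zariski-dense in the same connected group, so $W_i|_F\simeq W_j|_F$ forces $W_i\simeq W_j$ already over $K$. The paper simply builds this into its decomposition (``$\varrho_i|_F\not\simeq\varrho_j|_F$ for every finite $F/K$'') without further comment, so your per-$i$ characters $\chi_i$ are well defined and no appeal to $\wedge^2$ is needed.

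Your Galoisness argument via $\chi^\sigma=\chi$ is correct but differs from the paper's and requires one line you omit: uniqueness of the twisting character. If $\psi\otimes\varrho_{A,\ell}|_K\simeq\varrho_{A,\ell}|_K$ for a nontrivial quadratic $\psi$, the non-collision property above gives $\psi\otimes W_i\simeq W_i$ for each $i$, whence $W_i|_{\ker\psi}$ is reducible by Clifford theory, contradicting strong absolute irreducibility; so $\chi$ is unique and your $G_k$-invariance argument goes through. The paper takes a different route: after disposing of the case $A_K\sim A'_K$ (where $\chi=\triv$ and $L=K$), it observes that $L$ is the minimal field of definition of $\Hom(A_\Qbar,A'_\Qbar)$, and such a field is automatically Galois over $k$.
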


\begin{proof}
Suppose that $A_K$ and $A'_K$ are not isogenous, as otherwise the result is clear.
Since the Zariski closures of $\varrho_{A,\ell}(G_K)$ and $\varrho_{A,\ell}(G_K)$ are connected, we can choose a prime $\ell$ such that
$$
\varrho_{A,\ell}|_K\simeq \bigoplus_i \varrho_i^{\oplus n_i}\,,\qquad \varrho_{A',\ell}|_K\simeq \bigoplus_i {\varrho'}_i^{\oplus n'_i}\,,
$$
where the $\varrho_i$ (resp. $\varrho_i'$) are strongly absolutely irreducible representations of $G_K$ pairwise nonisomorphic even after restriction to a finite extension of $K$. After reordering the $\varrho_i$, Corollary~\ref{corollary: locquad geomiso} provides a finite extension $L'/K$ such that $\varrho_i|_{L'}\simeq \varrho_i'|_{L'}$. In particular $n_i=n_i'$. 
There exists a density 1 subset $\Sigma\subseteq \Sigma_K$ such that for every $\p \in \Sigma$ there exists $\chi_\p\in \{\pm 1\}$ such that $\Tr(\varrho_{A',\ell}(\Frob_\p))=\chi_\p\cdot \Tr(\varrho_{A,\ell}(\Frob_\p))$.
The argument in the proof of Corollary \ref{corollary: Rajan} shows that there exists a character $\chi_1$ of $G_K$ such that $\varrho_1'\simeq \chi_1\otimes \varrho_1$. For every $\p \in \Sigma$, we thus have
$$
n_1(\chi_1(\Frob_\p)-\chi_\p)\cdot \Tr(\varrho_1(\Frob_\p))-\chi_\p\cdot \sum_{i\geq 2} n_i\Tr(\varrho_i(\Frob_\p))+ \sum_{i \geq 2} n_i\Tr(\varrho_i'(\Frob_\p))=0\,.
$$
Arguing as in the proof of Theorem~\ref{theorem: serre}, we see that $\varrho_{A',\ell}|_K\simeq \chi_1\otimes \varrho_{A,\ell}|_K$ and that $\chi_1$ is quadratic. If $L/K$ is the quadratic extension cut out by $\chi_1$, then there is an isogeny from $A_L$ to $A'_L$. Hence $L/k$ is the minimal extension over which all homomorphisms from $A_\Qbar$ to $A'_\Qbar$ are defined, and so $L/k$ is Galois.
\end{proof}

\section{Products of elliptic curves}\label{section: productsec}

In this section we consider abelian varieties which are $\Qbar$-isogenous to products of elliptic curves.  In~\S\ref{section: powersofec} we examine the case of $\Qbar$-isogenous factors. In this situation, Theorem~\ref{theorem: serre} fails to apply, but the local-global QT principle will follow by combining the Tate module tensor decompositions obtained in \cite{FG20} with a theorem of Ramakrishnan that we recall in \S\ref{section: Ramakrishnan}. Finally, in \S\ref{section: productsecCM} we consider the case of a product of pairwise geometrically nonisogenous elliptic curves with CM.

\subsection{Ramakrishnan's theorem}\label{section: Ramakrishnan}

Let $L/k$ be a finite Galois extension, $V$ a $\Qbar$-vector space, and $\theta\colon \Gal(L/k)\rightarrow \GL(V)$ an Artin representation. Let 
$$
\ad_\theta\colon \Gal(L/k)\rightarrow \Aut(\End(V))\,,\qquad \ad_\theta(s)(f):=\theta(s)\circ f\circ \theta(s)^{-1}
$$ 
denote the adjoint representation of $\theta$. It satifies $\ad_\theta \simeq \theta \otimes \theta^\vee$. Note that if $\ad^0_\theta$ denotes the restriction of $\ad_\theta$ on the subspace of trace~$0$ elements of $\End(V)$, then $\ad_\theta \simeq \triv \oplus \ad^0_\theta$, where $\triv$ denotes the trivial representation.

The next theorem of Ramakrishnan \cite[Thm. B]{Ram00} shows that, in the $2$-dimensional case, $\theta$ can be recovered from $\ad^0_\theta$ up to twist by a character (beware that Ramakrishnan's original theorem applies to general $\ell$-adic representations; we will only apply it to Artin representations).  

\begin{theorem}[Ramakrishnan]\label{theorem: Ramakrishnan}
If $\theta,\theta'\colon \Gal(L/k)\rightarrow \GL_2(\Qbar)$ are Artin representations with $\ad^0_\theta\simeq \ad^0_{\theta'}$, then there is a character $\chi$ of $\Gal(L/k)$ such that $\theta'\simeq \chi \otimes \theta$.  
\end{theorem}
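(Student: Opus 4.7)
The plan is to exhibit a one-dimensional $H$-subrepresentation $\chi \hookrightarrow \pi := \theta' \otimes \theta^\vee$, where $H := \Gal(L/k)$; once such a character $\chi$ is produced, $\Hom_H(\chi \otimes \theta, \theta') \neq 0$ gives $\theta' \simeq \chi \otimes \theta$ after a brief matching of irreducible constituents. First I would rewrite the hypothesis as $\ad_\theta \simeq \ad_{\theta'}$ by adjoining the trivial representation to both sides of $\ad^0_\theta \simeq \ad^0_{\theta'}$. Two tensor-algebraic identities then drive the analysis. From $\pi \otimes \pi^\vee \simeq \ad_\theta \otimes \ad_{\theta'} \simeq \ad_\theta^{\otimes 2}$ and Schur's lemma, one reads off
\[
\dim \End_H(\pi) = \dim \End_H(\ad_\theta);
\]
expanding $\wedge^2(V^\vee \otimes W)$ for two-dimensional $V, W$ and using the identity $\Sym^2 V \simeq \ad^0_V \otimes \det V$, one also obtains
\[
\wedge^2 \pi \simeq (\omega \otimes \ad^0_\theta)^{\oplus 2}, \qquad \omega := \det\theta' \otimes (\det\theta)^{-1}.
\]

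The reducible case, where $\theta$ splits as a sum of two characters (detected by $\triv$ appearing as a constituent of $\ad^0_\theta$, cf.\ Remark~\ref{remark: reducibility from ad}), is settled by directly matching the characters of the constituents of $\theta'$ to those of $\theta$ up to a common twist. Otherwise, $\theta$ and $\theta'$ are both irreducible and I would proceed by cases on the projective image $P\theta(H) \subset \PGL_2(\Qbar)$, which is either one of $\alt{4},\sym{4},\alt{5}$ (the ``exotic'' case) or dihedral. In the exotic case, $\ad^0_\theta$ is irreducible of dimension three, so the first identity forces $\dim\End_H(\pi) = 2$, and hence $\pi$ decomposes as a sum of two non-isomorphic irreducibles whose dimensions sum to $4$. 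The pattern $(1,3)$ immediately yields a character inside $\pi$; the pattern $(2,2)$ is ruled out because $\wedge^2 \pi$ would then contain the characters $\det\tau_1 \oplus \det\tau_2$, whereas the second identity shows $\wedge^2 \pi$ has no one-dimensional subrepresentation (as $\omega \otimes \ad^0_\theta$ is irreducible of dimension three).

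In the dihedral case, I would write $\theta \simeq \Ind_{H_0}^H \psi$ for an index-$2$ subgroup $H_0 \subset H$ (taken as the kernel of the quadratic character splitting off $\ad^0_\theta$, or, in the Klein $V_4$-subcase where $\ad^0_\theta$ is a sum of three characters, any of the three admissible index-$2$ subgroups) and similarly $\theta' \simeq \Ind_{H_0}^H \psi'$. Letting $\tau$ generate $H/H_0$, a Mackey-and-projection-formula computation identifies the condition $\ad^0_\theta \simeq \ad^0_{\theta'}$ with $\psi'/(\psi')^\tau \in \{\psi/\psi^\tau,\, \psi^\tau/\psi\}$. In either sub-sub-case, either $\psi'/\psi$ or $\psi\psi'$ is $\tau$-invariant on $H_0$ and so extends to a character $\tilde\nu$ of $H$; the projection formula then yields $\theta' \simeq \tilde\nu \otimes \theta$ or $\theta' \simeq \tilde\nu \otimes \theta^\vee \simeq (\tilde\nu \otimes (\det\theta)^{-1}) \otimes \theta$, respectively. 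I expect the main obstacle to be the careful bookkeeping in the dihedral case---particularly in the $V_4$-subcase, where the choice of $H_0$ is not canonical---but the freedom to select any admissible $H_0$ should make the argument uniform.
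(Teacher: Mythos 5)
Your proposal is correct and follows essentially the same route as the paper's (Ramakrishnan's) proof: the same three-case split (reducible case by direct character matching; dihedral case via induction from an index-two subgroup, a Mackey comparison, and extension of a $\tau$-invariant character ratio; exotic case by showing the four-dimensional tensor product is reducible, ruling out a $2+2$ splitting with a $\wedge^2$ identity, and twisting by the resulting character). The only differences are cosmetic: you work with $\theta'\otimes\theta^\vee$ rather than $\theta\otimes\theta'$ (they differ by the character $(\det\theta)^{-1}$), and you deduce reducibility from the endomorphism count $\dim\End(\pi)=\dim\End(\ad_\theta)=2$ instead of the paper's count of the multiplicity of the trivial representation in $\Sym^2(\theta\otimes\theta')\otimes(\det\theta)^{-1}\otimes(\det\theta')^{-1}$, which is the same computation in a twisted guise.
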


\begin{proof}
It follows from Schur Lemma that $\theta$ is irreducible if and only if $\triv$ is not an irreducible constituent of $\ad^0_\theta$. Hence, $\theta'$ is irreducible if and only if $\theta$ is irreducible. See \cite[Thm. B]{Ram00} for the proof in the case that both $\theta$ and $\theta'$ are irreducible. If both $\theta$ and $\theta'$ are reducible, the proof is an elementary exercise left to the reader.
\end{proof}

\begin{remark}
If $\theta,\theta'\colon G_k\rightarrow \GL_3(\Qbar)$ are Artin representations such that $\ad^0_{\theta}\simeq \ad^0_{\theta'}$, it is not necessarily true that there exists a character $\chi$ such that $\theta'\simeq \chi \otimes \theta$. To construct an example of this, let $L/k$ be a degree 24 Galois extension with Galois group $\dih 4\times \cyc 3$, let $\nu$ denote a non-selfdual degree $2$ irreducible representation of $\Gal(L/k)$, and let $\omega$ denote the determinant of $\nu$. One easily verifies that $\theta=\triv \oplus \nu$ and $\theta'=\omega \oplus \nu$ provide the desired example.
\end{remark}

We will be concerned with the following consequence of the above theorem.

\begin{corollary}\label{corollary: polysuptosign}
Let $\theta,\theta'\colon \Gal(L/k)\rightarrow \GL_g(\Qbar)$ be Artin representations. Suppose that $g$ is $2$ or odd. If for every $s\in \Gal(L/k)$, there exists $\chi_s\in \{ \pm 1\}$ such that
\begin{equation}\label{equation: charpolycond}
\det(1-\theta'(s)T)=\det(1-\chi_s\cdot \theta(s)T)\,,
\end{equation}
then there exists a quadratic character $\chi\colon \Gal(L/k)\rightarrow \Q ^\times$ such that $\theta'\simeq \chi \otimes \theta$.  
\end{corollary}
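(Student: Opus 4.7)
The plan is to convert the pointwise signs $\chi_s$ coming from hypothesis \eqref{equation: charpolycond} into a single global character by reading off appropriate coefficients of $T$. Comparing the coefficient of $T^g$ gives $\det\theta'(s) = \chi_s^g\det\theta(s)$, and the coefficient of $T$ gives $\Tr\theta'(s) = \chi_s\Tr\theta(s)$, for every $s\in\Gal(L/k)$.

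In the case $g$ odd, $\chi_s^g=\chi_s$, so the quotient of determinants $\chi := \det\theta' \cdot (\det\theta)^{-1}$ is a character of $\Gal(L/k)$ taking values in $\{\pm 1\}$ and satisfying $\chi(s)=\chi_s$ pointwise. Substituting back, the hypothesis reads $\det(1-\theta'(s)T)=\det(1-(\chi\otimes\theta)(s)T)$ for every $s$, so $\theta'$ and $\chi\otimes\theta$ have identical characters; since both are semisimple representations of a finite group, $\theta'\simeq\chi\otimes\theta$. This case is essentially bookkeeping.

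The interesting case is $g=2$, where $\chi_s^2=1$ collapses the determinant identity to $\det\theta=\det\theta'$ and no longer recovers $\chi_s$ from the invariants of $\theta(s)$ alone. To extract the sign globally I would square the trace identity: $\Tr\theta'(s)^2=\Tr\theta(s)^2$ for all $s$ yields $\Tr((\theta'\otimes\theta')(s))=\Tr((\theta\otimes\theta)(s))$, hence $\theta\otimes\theta\simeq\theta'\otimes\theta'$. Combining this with $\det\theta=\det\theta'$ and the 2-dimensional identity $\theta^\vee\simeq\theta\otimes(\det\theta)^{-1}$, I obtain
\[
\ad_\theta \simeq \theta\otimes\theta\otimes(\det\theta)^{-1} \simeq \theta'\otimes\theta'\otimes(\det\theta')^{-1}\simeq\ad_{\theta'},
\]
and cancelling the trivial summand in each adjoint gives $\ad^0_\theta\simeq\ad^0_{\theta'}$. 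At this point Theorem~\ref{theorem: Ramakrishnan} applies and produces a character $\chi$ of $\Gal(L/k)$ with $\theta'\simeq\chi\otimes\theta$; quadraticity is then forced by taking determinants, since $\det\theta'=\chi^2\det\theta=\det\theta$.

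The main obstacle is precisely the $g=2$ case, where the determinant comparison fails to pin down $\chi_s$ and one genuinely needs Ramakrishnan's theorem as input. For odd $g$ the argument reduces to the single observation that $\chi_s^g=\chi_s$, which is exactly what allows the determinant to manufacture the character $\chi$ directly.
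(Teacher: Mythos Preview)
Your proof is correct and follows essentially the same approach as the paper's: compare determinants to handle the odd case directly, and in the $g=2$ case deduce $\ad^0_\theta\simeq\ad^0_{\theta'}$ and invoke Theorem~\ref{theorem: Ramakrishnan}. The paper simply asserts that \eqref{equation: charpolycond} implies $\ad^0_\theta\simeq\ad^0_{\theta'}$ without spelling out the intermediate step through $\theta\otimes\theta\simeq\theta'\otimes\theta'$ and $\det\theta=\det\theta'$, but your expansion of this point is exactly what is meant.
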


\begin{proof}
By comparing the leading coefficients of both sides of~\eqref{equation: charpolycond}, one obtains 
\begin{equation}\label{equation: determinantrel}
\det(\theta'(s))=\chi_s^g\cdot \det(\theta(s))
\end{equation}
for every $s$ in $\Gal(L/k)$. If $g$ is odd, define the character $\chi:=\det(\theta')/\det(\theta)$. Since it satisfies $\chi(s)=\chi_s$ for every $s$, it is quadratic and $\theta'\simeq \chi \otimes \theta$ as desired.
Suppose next that $g=2$. Then \eqref{equation: charpolycond} implies that $\ad^0_{\theta}\simeq \ad^0_{\theta'}$. Hence by Theorem~\ref{theorem: Ramakrishnan} there exists a character $\chi\colon \Gal(L/k)\rightarrow \Qbar^\times$ such that $\theta'\simeq \chi\otimes \theta$. Taking determinants of this isomorphism and comparing with \eqref{equation: determinantrel}, one finds that $\chi$ is in fact quadratic.
\end{proof}

\begin{remark}\label{remark: dimension 6 counterexample}
The above corollary fails to hold for general even values of $g$. Let $L/k$ denote a biquadratic extension. Let $\varphi$ and $\psi$ denote two distinct nontrivial characters of $\Gal(L/k)$. It is easy to verify that the degree $6$ Artin representations 
$$
\theta:=\triv^{\oplus 3}\oplus \varphi \oplus \psi \oplus \varphi\psi\,,\qquad \theta':=\triv^{\oplus 2}\oplus \varphi^{\oplus 2}\oplus \psi^{\oplus 2}\,. 
$$
verify \eqref{equation: charpolycond}, while there is no character $\chi$ of $\Gal(L/k)$ such that $\theta' \simeq \chi \otimes \theta$. 
\end{remark}

\subsection{Tate module tensor decompositions}\label{section: powersofec}

In this section, we consider some families of geometrically isotypic abelian varieties, that is, abelian varieties that are $\Qbar$-isogenous to the power of an (absolutely) simple one. We will rely on the description of the Tate module of one such variety given in \cite[Thm. 1.1]{FG20}. We next restate that theorem in our particular cases of interest. 

Fix a complex conjugation $\sigma$ in $G_\Q$. Let $\varrho$ be an $\ell$-adic representation of~$G_k$ unramified outside a finite set $S\subseteq \Sigma_k$. We will denote by $\varrho^\sigma$ the $\ell$-adic representation defined by $\varrho^{\sigma}(s):=\varrho(\sigma s\sigma^{-1} )$ for every $s\in G_k$. Given a number field $F$, we will say that $\varrho$ is $F$-rational if $\det(1-\varrho(\Frob_\p)T)\in F[T]$ for every $\p\in \Sigma_k-S$. If $\theta$ is an Artin representation with coefficients in $F$, we will denote by~$\overline \theta$ the representation with coefficients in $\sigma(F)$ defined by $\thetabar(s):=\sigma(\theta(s))$. 

\begin{theorem}[\cite{FG20}]\label{theorem: FiteGuitart}
Suppose that $A_\Qbar$ is isogenous to the power of either: 
\begin{enumerate}[a)]
\item an elliptic curve without CM; or
\item an abelian surface with quaternionic multiplication (QM); or
\item an elliptic curve with CM by a quadratic imaginary field $M$.
\end{enumerate}
Then there exists a number field $F$ such that for every prime $\ell$ totally split in $F$:
\begin{enumerate}[i)]
\item If $a)$ or $b)$ hold, then 
$$
\varrho_{A,\ell}\simeq \theta \otimes_{\Q_\ell} \varrho\,,
$$
where $\theta\colon G_k \rightarrow \GL_g(F)$ is an Artin representation, and $\varrho\colon G_k\rightarrow \GL_2(\Q_\ell)$ is a strongly absolutely irreducible $F$-rational $\ell$-adic representation. 

\item If $c)$ holds, then $F$ contains $M$ and
$$
\varrho_{A,\ell}|_{kM}\simeq (\theta \otimes_{\Q_\ell} \chi)\oplus (\thetabar \otimes_{\Q_\ell} \chibar)\,,
$$ 
where $\theta\colon G_{kM} \rightarrow \GL_g(F)$ is an Artin representation and $\chi,\chibar\colon G_{kM}\rightarrow \Q_\ell^\times$ are $F$-rational continuous characters. Moreover, the $\ell$-adic representations $\theta\otimes_{\Q_\ell}\chi$ and $\thetabar\otimes_{\Q_\ell}\chibar$ are in fact $M$-rational. Finally, if $k\not = kM$, then 
\begin{equation}\label{equation: sigmatrace}
\varrho_{A,\ell}\simeq \Ind^k_{kM}(\theta \otimes_{\Q_\ell} \chi)\qquad \text{and}\qquad \thetabar\otimes_{\Q_\ell}\chibar|_{kM}\simeq \theta^\sigma\otimes_{\Q_\ell}\chi^\sigma|_{kM}\,.
\end{equation}
\end{enumerate} 
\end{theorem}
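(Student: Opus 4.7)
The plan is to exploit the structure of the geometric endomorphism algebra $B := \End(A_\Qbar)\otimes \Q$ together with the $G_k$-action on it, which factors through the finite quotient $\Gal(K/k)$ where $K$ is the endomorphism field of $A$. Concretely: in case $a)$, $B \simeq M_g(\Q)$; in case $b)$, $B \simeq M_{g/2}(D)$ with $D$ a quaternion algebra over $\Q$; in case $c)$, $B \simeq M_g(M)$. In cases $a)$ and $b)$, the center of $B$ is $\Q$, and the Skolem--Noether theorem forces each Galois automorphism of $B$ to be inner, which produces a projective Artin representation $G_k \to \PGL_g(\Qbar)$ (in case $b)$, after first choosing a splitting field for $D$ so that $B$ becomes a matrix algebra over that field). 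In case $c)$, the center $M$ of $B$ may be acted on nontrivially by $G_k$, but the subgroup $G_{kM}$ preserves the center, so Skolem--Noether applied to $G_{kM}$ produces instead a projective representation $G_{kM} \to \PGL_g(\Qbar)$.

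Next, I would lift the projective representation to a genuine Artin representation $\theta$ valued in $\GL_g(F)$ for some number field $F$. The obstruction is a class in the appropriate $H^2$ with $\Qbar^\times$-coefficients attached to the projective representation, and by a standard cohomological argument (using that the action factors through a finite quotient), there exists a number field $F$ over which this obstruction dies. Having fixed such an $F$, let $\ell$ be any prime totally split in $F$, so that $B \otimes_\Q \Q_\ell$ splits as a product of matrix algebras over $\Q_\ell$: namely $M_g(\Q_\ell)$ in cases $a)$ and $b)$, and $M_g(\Q_\ell) \times M_g(\Q_\ell)$ in case $c)$.

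Then, viewing $V_\ell(A)$ as a module over $B \otimes \Q_\ell$, Morita theory gives a canonical decomposition $V_\ell(A) \simeq W_0 \otimes_{\Q_\ell} W$ in cases $a)$ and $b)$, where $W_0$ is the standard $g$-dimensional module and $W$ is a two-dimensional multiplicity space. The $G_k$-action on $V_\ell(A)$ is semilinear with respect to $B$ via the Galois action on $B$, and the lift $\theta$ was chosen precisely so that $\theta(s)$ implements this twist; hence there is a unique $\varrho(s) \in \GL(W)$ with $\varrho_{A,\ell}(s) = \theta(s) \otimes \varrho(s)$, and one checks that $s \mapsto \varrho(s)$ is a homomorphism. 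In case $c)$, the two matrix factors of $B \otimes \Q_\ell$ split $V_\ell(A)|_{G_{kM}}$ into two $g$-dimensional pieces, each undergoing an analogous decomposition and producing $\theta\otimes\chi$ and $\thetabar \otimes \chibar$; the ``bar'' arises from the exchange of the two central idempotents of $M\otimes\Q_\ell$ by complex conjugation, and \eqref{equation: sigmatrace} follows by tracking this action together with the induction from $G_{kM}$ to $G_k$ when $k\not=kM$.

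Finally, I would verify the remaining properties. The $F$-rationality of $\varrho$ and $\chi$ follows by matching characteristic polynomials of Frobenius: those of $\varrho_{A,\ell}$ lie in $\Z[T]$ and those of $\theta$ lie in $F[T]$, so the complementary factor must also be $F$-rational. Strong absolute irreducibility of $\varrho$ follows from the observation that $\varrho|_K$ is, up to isomorphism, the $\ell$-adic representation of the underlying geometric factor (a non-CM elliptic curve in case $a)$, or a QM surface after splitting in case $b)$), whose absolute irreducibility is preserved under every further extension. The main obstacle is the projective-to-linear lifting step, which is exactly what forces the auxiliary field $F$ into the statement: the projective representation is canonical, but the linear lift is only determined up to a character twist, and identifying a sufficiently large field of coefficients over which a genuine lift exists requires a Brauer-group argument.
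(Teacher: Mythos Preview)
The paper does not prove this theorem: it is quoted verbatim from \cite{FG20} (see the attribution in the theorem heading and the opening sentence of \S\ref{section: powersofec}), and no argument for it appears anywhere in the present article. There is therefore nothing in this paper to compare your proposal against.

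That said, your sketch is a plausible outline of the strategy actually used in \cite{FG20}. The key steps---extracting a projective representation of $G_k$ (or $G_{kM}$) from the Galois action on the central simple algebra $\End(A_\Qbar)\otimes\Q$ via Skolem--Noether, lifting it to a genuine Artin representation $\theta$ at the cost of introducing the auxiliary coefficient field $F$, and then using the module structure of $V_\ell(A)$ over $\End(A_\Qbar)\otimes\Q_\ell$ to peel off $\theta$ and isolate the two-dimensional (or one-dimensional, in the CM case) factor---are all present in that reference. One point worth tightening: your description of the lifting obstruction is slightly imprecise. Over a number field the obstruction to lifting a continuous projective representation always vanishes (Tate's theorem), so a lift to $\GL_g(\Qbar)$ exists unconditionally; the role of $F$ is rather to be a field of definition for (the character of) such a lift, and simultaneously a splitting field for the quaternion algebra $D$ in case $b)$ and for $M$ in case $c)$. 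Also, your claim that in case $b)$ ``$\varrho|_K$ is the $\ell$-adic representation of the underlying geometric factor'' needs more care: the QM surface itself has a $4$-dimensional Tate module, and $\varrho$ arises only after splitting the quaternion action, so the strong absolute irreducibility comes from Faltings applied to the simple factor rather than from literally identifying $\varrho$ with some $\varrho_{B,\ell}$.
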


\begin{remark}\label{remark: decompforoddg}
There is a particular situation of case $i)$ of the above theorem in which the representations $\theta,\varrho$ admit especially explicit descriptions. Suppose that there exists an elliptic curve $B$ without CM and \emph{defined over $k$} such that $A_\Qbar$ and $B^g_\Qbar$ are isogenous (by \cite[Thm. 2.21]{FG18}, this happens for example whenever $g$ is odd). Then we may take $F=\Q$, $\varrho=\varrho_{B,\ell}$, and $ \theta=\Hom^0(B^g_\Qbar,A_\Qbar)$,  
in which case the theorem follows from \cite[Thm. 3.1]{Fit12}. 
\end{remark}

The above theorem is a crucial input for Theorems \ref{theorem: nonCMpower} and \ref{theorem: CMpower}, whose proofs will also need the following auxiliary lemma on the abundance of ordinary primes. Let~$\p$ be a prime of $\Sigma_k$ of good reduction for $A$ lying over the rational prime~$p$. We say that $\p$ is \emph{ordinary} if the central coefficient of $L_\p(A,T)$ is not divisible by $p$, equivalently, if $g$ of the roots of $L_\p(A,T)$ have $p$-adic valuation~$0$; we say that $\p$ is \emph{supersingular} if all the roots of $L_\p(A,T)$ are of the form $\zeta \Nm(\p)^{-1/2}$, where $\zeta$ is a root of unity and $\Nm(\p)$ is the absolute norm of $\p$. 

\begin{lemma}\label{lemma: ordinary}
Suppose that~$A_\Qbar$ is isogenous to the power of an elliptic curve $B$. If~$B$ has CM, say by a quadratic imaginary field $M$, suppose that $k=kM$. Then the set of primes of ordinary reduction for $A$ has density $1$. 
\end{lemma}

\begin{proof}
We claim that a prime $\p$ of $\Sigma_k$ of good reduction for~$A$ is either ordinary or supersingular. Indeed, let $q$ denote $\Nm(\p)$, let $\alpha_\p$ be a root of $L_\p(A,T)$, and let $v\colon \Qbar_p^\times\rightarrow \Q$ denote the $p$-adic valuation normalized so that $v(q)$ is $1$. By hypothesis, there exists a finite Galois extension $K/k$ such that $A_K$ is isogenous to the power of an elliptic curve over $K$. If $f\geq 1$ denotes the residue degree of $\p$ in $K/k$, then $A_\p\times_{\F_q}\F_{q^f}$ is isogenous to the power of an elliptic curve over $\F_{q^f}$, and hence the valuation $v(\alpha_\p^f)$ is either $0$,$f$, or $f/2$. In the first two cases, $\p$ is ordinary, and in the latter case $\p$ is supersingular. We need to show that the set $S$ of supersingular primes has density $0$.
If $B$ does not have CM, let $\theta$ and $\varrho$ be as in part $i)$ of Theorem~\ref{theorem: FiteGuitart}. If $\p\in S$, then $\Tr(\varrho(\Frob(\p)))$ is divisible by $\sqrt q$ and hence is limited to finitely many possibilities. By \cite{Ser81}, this implies that $S$ has density $0$. If $B$ has CM, let~$\chi$ be as in part $ii)$ of Theorem \ref{theorem: FiteGuitart}. If $\p\in S$, then $\chi(\Frob_\p)=\zeta \sqrt{q}$, where $\zeta$ is a root of unity whose order is bounded in terms of $g$. By \cite{Ser81} or \cite{Hec20}, and under the assumption $k=kM$, this implies that~$S$ has density $0$.   
\end{proof}

\begin{remark}
When $g$ is odd and $B$ has no CM, the above lemma admits an even simpler proof. In this case, by Remark \ref{remark: decompforoddg}, there is an elliptic curve $B$ \emph{defined over~$k$} and an Artin representation $\theta$ such that $\varrho_{A,\ell}\simeq \theta\otimes \varrho_{B,\ell}$. Thus a prime of good reduction of $A$ is ordinary for $A$ if and only if it is ordinary for $B$. That the set of primes ordinary for $B$ has density $1$ is well known.  
\end{remark}

\begin{theorem}\label{theorem: nonCMpower}
Suppose that \eqref{equation: main condition} holds for $A$ and $A'$, and that:
\begin{enumerate}[i)]
\item The dimension $g$ of $A$ is either odd or equal to $2$.
\item $A_\Qbar$ is either an abelian surface with QM or isogenous to the power of an elliptic curve without CM.
\end{enumerate}
Then $A$ and $A'$ are quadratic twists.
\end{theorem}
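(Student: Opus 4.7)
My plan is to use Theorem \ref{theorem: FiteGuitart} to write $\varrho_{A,\ell}$ and $\varrho_{A',\ell}$ as tensor products of a $g$-dimensional Artin representation with a common $2$-dimensional companion, to transfer the hypothesis \eqref{equation: main condition} to the Artin factors by means of a valuation argument at ordinary primes, and finally to invoke Corollary \ref{corollary: polysuptosign}.

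By Theorem \ref{theorem: FiteGuitart}, we have $\varrho_{A,\ell}\simeq \theta\otimes \varrho$ with $\theta\colon G_k\to \GL_g(F)$ an Artin representation and $\varrho\colon G_k\to \GL_2(\Q_\ell)$ strongly absolutely irreducible. Corollary \ref{corollary: locquad geomiso} furnishes a finite Galois extension $L/k$ over which $A$ and $A'$ become isogenous, so $A'_\Qbar\sim A_\Qbar$ and Theorem \ref{theorem: FiteGuitart} applies equally to $A'$, yielding $\varrho_{A',\ell}\simeq \theta'\otimes \varrho'$. Enlarging $L$ so that both $\theta|_L$ and $\theta'|_L$ become sums of trivial characters, one obtains $\varrho|_L^{\oplus g}\simeq \varrho_{A,\ell}|_L\simeq \varrho_{A',\ell}|_L\simeq \varrho'|_L^{\oplus g}$, and strong absolute irreducibility of $\varrho$ and $\varrho'$ combined with uniqueness of isotypic decomposition forces $\varrho|_L\simeq \varrho'|_L$. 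Hence $\varrho'\simeq \xi\otimes \varrho$ for a finite-order character $\xi$ of $G_k$ (obtained from the $\Gal(L/k)$-action on the $1$-dimensional space $\Hom_{G_L}(\varrho,\varrho')$), and absorbing $\xi$ into $\theta'$ I may assume the common decomposition $\varrho_{A',\ell}\simeq \theta'\otimes \varrho$.

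The key step is the valuation argument at ordinary primes. By Lemma \ref{lemma: ordinarynonCM} (for odd $g$) and by \cite[Thm.~3]{Saw16} (for $g=2$, which covers the QM case as well), the set of ordinary primes of $A$ has density~$1$. At an ordinary prime $\p$ of residue degree $f$, the two eigenvalues $\beta_1,\beta_2$ of $\varrho(\Frob_\p)$ satisfy $v_p(\beta_1)=0$ and $v_p(\beta_2)=f$, while the eigenvalues of $\theta(\Frob_\p)$ and $\theta'(\Frob_\p)$ are roots of unity and thus $p$-adic units. Condition \eqref{equation: main condition} says that for some $\epsilon_\p\in \{\pm 1\}$,
\[
\det(1-(\theta(\Frob_\p)\otimes \varrho(\Frob_\p))T)=\det(1-\epsilon_\p(\theta'(\Frob_\p)\otimes \varrho(\Frob_\p))T),
\]
so separating the two multisets of eigenvalues by $p$-adic valuation yields the identity $\det(1-\theta(\Frob_\p)T)=\det(1-\epsilon_\p\theta'(\Frob_\p)T)$ on a density~$1$ set of primes.

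To finish, let $M/k$ be a finite Galois extension through which both $\theta$ and $\theta'$ factor. By Chebotarev, every $s\in \Gal(M/k)$ is represented by infinitely many Frobenius elements from the above density~$1$ set, so for every such $s$ there exists $\chi_s\in \{\pm 1\}$ with $\det(1-\theta(s)T)=\det(1-\chi_s\theta'(s)T)$. Hypothesis (i) places us in the setting of Corollary \ref{corollary: polysuptosign}, which produces a quadratic character $\chi$ of $\Gal(M/k)$ with $\theta'\simeq \chi\otimes \theta$; hence $\varrho_{A',\ell}\simeq \chi\otimes \varrho_{A,\ell}$, and Lemma \ref{lemma: basic} concludes the proof. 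The main difficulty I anticipate is the valuation-separation step, which simultaneously demands the density~$1$ statement for ordinary primes and accounts for the dimensional restriction in (i), since for even $g>2$ the Artin-level input from Corollary \ref{corollary: polysuptosign} genuinely fails (Remark~\ref{remark: dimension 6 counterexample}).
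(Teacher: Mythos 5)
Your proof is correct and follows essentially the same route as the paper: the tensor decomposition from Theorem \ref{theorem: FiteGuitart}, reduction to a common $2$-dimensional factor $\varrho$ after base change, separation of the eigenvalue multisets at a density-$1$ set of ordinary primes (via \cite[Thm. 3]{Saw16} and Lemma \ref{lemma: ordinarynonCM}), and Corollary \ref{corollary: polysuptosign}. The only cosmetic difference is that you separate the eigenvalues by $p$-adic valuation, whereas the paper phrases the same ordinariness input as ``$\alpha_\p/\beta_\p$ is not a root of unity''; your version even sidesteps the small matching ambiguity in the latter formulation.
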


\begin{proof}
By Corollary \ref{corollary: locquad geomiso} there is a finite Galois extension $L/k$ such that $A_L$ and~$A'_L$ are isogenous. Hence $A'$ also satisfies hypothesis $ii)$.
After possibly enlarging $L/k$, by Theorem \ref{theorem: FiteGuitart}, there are a number field $F$, Artin representations $\theta,\theta'$ of $\Gal(L/k)$ realizable over $F$, and $F$-rational $\ell$-adic representations~$\varrho,\varrho'$ of $G_k$ of degree $2$ such that $\varrho_{A,\ell}\simeq \theta \otimes_{\Q_\ell} \varrho$ and $\varrho_{A',\ell}\simeq \theta' \otimes_{\Q_\ell} \varrho'$,
where $\ell$ is a prime totally split in $F$. Since $\varrho|_L\simeq \varrho'|_L$ and $\varrho$ is strongly absolutely irreducible, there is a character $\chi$ of $\Gal(L/k)$ such that $\varrho'\simeq \chi \otimes \varrho$.
If $\p\in \Sigma_k$ is a prime of good reduction for $A$, let $\alpha_\p$ and $\beta_\p$ be the eigenvalues of $\varrho(\Frob_\p)$.
By \cite[Thm. 3]{Saw16} and Lemma \ref{lemma: ordinary}, there exists a density $1$ subset $\Sigma \subseteq \Sigma_k$ of primes of good reduction for $A$ and $A'$ such that for every $\p\in \Sigma$ the quotient $\alpha_\p/\beta_\p$ is not a root of unity, and 
$$
\det(1-(\theta' \otimes \chi \otimes \varrho)(\Frob_\p) T)=\det(1-\epsilon_\p\cdot (\theta\otimes \varrho)(\Frob_\p) T)
$$ 
for some $\epsilon_\p\in \{\pm 1\}$. Hence for every $\p\in \Sigma$ we have
$$
\det(1-\alpha_\p\cdot (\theta' \otimes \chi)(\Frob_\p) T)=\det(1-\epsilon_\p\alpha_\p\cdot \theta(\Frob_\p) T)\,.
$$
Since $\Sigma$ has density $1$, for every $s\in \Gal(L/k)$ there must be $\epsilon_s\in \{\pm 1\}$ such that 
$$
\det(1- (\theta' \otimes \chi)(s) T)=\det(1-\epsilon_s\cdot \theta(s) T)\,.
$$
Since $i)$ holds, by Corollary \ref{corollary: polysuptosign}, there exists a quadratic character $\varphi$ of $\Gal(L/k)$ such that $\theta'\otimes \chi\simeq \varphi \otimes \theta$. Therefore $\varrho_{A',\ell}\simeq \theta'\otimes \chi\otimes \varrho\simeq \varphi\otimes \theta\otimes \varrho\simeq \varphi \otimes \varrho_{A,\ell}$.
\end{proof}

\begin{remark}\label{remark: dimension 6 counterexample AV}
We note that the theorem above is not true for general even values of $g$. Let $E$ be an elliptic curve defined over $k$ without CM. Let $A$ and $A'$ be the 6-dimensional abelian varieties $E\otimes \theta$ and $E\otimes \theta'$, where $\theta$ and $\theta'$ are the Artin representations defined in Remark \ref{remark: dimension 6 counterexample}. Then $A$ and $A'$ are locally quadratic twists at almost all primes of $\Sigma_k$. However, if there were a quadratic character $\varepsilon$ of $G_k$ such that $\varrho_{A',\ell} \simeq \varepsilon \otimes \varrho_{A,\ell}$, then, arguing as in the proof of the above theorem, we would obtain that $\theta'$ is isomorphic to $\varepsilon \otimes \theta$, which contradicts Remark \ref{remark: dimension 6 counterexample}.
\end{remark}

\begin{theorem}\label{theorem: CMpower}
Suppose that \eqref{equation: main condition} holds for $A$ and $A'$, and that:
\begin{enumerate}[i)]
\item The dimension $g$ of $A$ is either odd or equal to $2$.
\item $A_\Qbar$ is isogenous to the power of an elliptic curve with CM.
\end{enumerate}
Then $A$ and $A'$ are quadratic twists.
\end{theorem}

\begin{proof}
Let $M$ denote the quadratic imaginary field associated with the elliptic factor of $A$. Suppose first that $M$ is contained in $k$.
By arguing as in the proof of Theorem~\ref{theorem: nonCMpower}, using Theorem~\ref{theorem: FiteGuitart} we can find a finite Galois extension $L/k$, a number field $F$, Artin representations $\theta, \theta'\colon \Gal(L/k)\rightarrow \GL_g(F)$, and continuous $F$-rational $\ell$-adic characters $\chi,\chi',\overline\chi,\overline\chi'$ of $G_k$ such that 
\begin{equation}\label{equation: CMdecomp}
\varrho_{A,\ell}\simeq (\theta \otimes \chi) \oplus (\thetabar \oplus \chibar)\,,\quad\text{and}\qquad \varrho_{A',\ell}\simeq (\theta' \otimes \chi') \oplus ( \thetabar' \oplus \chibar')\,,
\end{equation}
where $\ell$ is a prime totally split in $F$. After reordering $\chi$ and $\chibar$ if necessary, we may assume that $\chi|_L\simeq \chi'|_L$. Therefore there exists a character $\psi$ of $\Gal(L/k)$ such that $\chi'\simeq \psi \chi$. 
By Lemma~\ref{lemma: ordinary} there exists a density $1$ subset $\Sigma\subseteq \Sigma_k$ of primes of good reduction for~$A$ and~$A'$ such that for every $\p \in \Sigma$ the quotient $\chi(\Frob_\p)/\chibar(\Frob_\p)$ is not a root of unity, and 
$$
\det(1-(\theta' \otimes \psi\chi \oplus \thetabar' \otimes \psibar  \chibar)(\Frob_\p) T)=\det(1-\epsilon_\p\cdot (\theta \otimes \chi \oplus \thetabar \otimes \chibar)(\Frob_\p) T)
$$ 
for some $\epsilon_\p\in \{\pm 1\}$. Hence for every $\p\in \Sigma$ we have
$$
\det(1-(\theta' \otimes \psi \chi )(\Frob_\p) T)=\det(1-\epsilon_\p\cdot (\theta \otimes \chi )(\Frob_\p) T)\,.
$$
Since $\Sigma$ has density $1$, for every $s\in \Gal(L/k)$ there must be $\epsilon_s\in \{\pm 1\}$ such that
$$
\det(1-(\theta' \otimes \psi )(s) T)=\det(1-\epsilon_s\cdot \theta (s) T).
$$
Corollary \ref{corollary: polysuptosign} yields a quadratic character $\varphi$ of $G_k$ such that $\theta'\otimes \psi\simeq \varphi \otimes \theta$.~Thus 
$$
\varrho_{A',\ell}\simeq (\theta'\otimes \psi\chi)\oplus (\thetabar'\otimes \psibar\chibar) \simeq \varphi\otimes (\theta\otimes \chi \oplus  \thetabar\otimes \chibar)\simeq \varphi \otimes \varrho_{A,\ell}\,.
$$
Suppose now that $M$ is not contained in $k$.
Arguing as in the previous case, we see that there are a continuous character $\chi \colon G_{kM}\rightarrow \Q_\ell^\times$, a quadratic character $\varphi$ of $\Gal(L/kM)$, and an Artin representation $\theta$ of $\Gal(L/kM)$ such that  
$$
\varrho_{A,\ell}\simeq \Ind_{kM}^k(\theta\otimes \chi)\,,\quad \varrho_{A',\ell}\simeq \Ind_{kM}^k(\varphi\otimes \theta\otimes \chi)\,,\quad \varphi^\sigma\otimes\theta^\sigma \simeq \varphi\otimes\theta^\sigma \,.   
$$
The third of the above isomorphisms follows from
$$
\varphi^\sigma\otimes\theta^\sigma \otimes \chi^\sigma \simeq
\overline \varphi\otimes \overline \theta \otimes \overline \chi \simeq
\varphi\otimes \overline \theta \otimes \overline \chi \simeq
 \varphi\otimes\theta^\sigma \otimes \chi^\sigma\,,
$$
where we have used \eqref{equation: sigmatrace}. Suppose that $g$ is odd. Taking determinants of $\varphi^\sigma\otimes\theta^\sigma \simeq \varphi\otimes\theta^\sigma$, we get $(\varphi^{\sigma})^g=\varphi^g$, that is, $\varphi^{\sigma}$ and $\varphi$ coincide as characters of $\Gal(L/kM)$. Hence $\varphi$ extends to a character $\tilde\varphi$ of $\Gal(L/k)$, and  
$$
\varrho_{A',\ell}\simeq \tilde\varphi\otimes\Ind_{kM}^k( \theta\otimes \chi)\simeq \tilde \varphi \otimes \varrho_{A,\ell}\,.
$$
Taking determinants of the above isomorphism, we see that $\tilde\varphi^{2g}=1$. Since trivially we also have $\tilde\varphi^ 4=1$, the fact that $g$ is odd implies that $\tilde\varphi$ is quadratic.
If $g=2$, then apply Lemma \ref{lemma: greatly simplifying lemma} below taking $N=kM$.  
\end{proof} 

The next lemma is applied in the proof of the above theorem in the case $g=2$. We state and prove it also in the case $g=3$ for future reference. 

\begin{lemma}\label{lemma: greatly simplifying lemma}
Suppose that \eqref{equation: main condition} holds for $A$ and $A'$, and that:
\begin{enumerate}[i)]
\item $A$ and $A'$ have dimension $\leq 3$.
\item There exist a quadratic extension $N/k$ contained in the endomorphism field $K/k$ and a quadratic character $\chi\colon G_N\rightarrow \{\pm 1\}$ such that $A_{N,\chi}\sim A'_N$.
\item $A$ has Frobenius traces concentrated in $N$.
\end{enumerate} 
Then $A$ and $A'$ are quadratic twists.
\end{lemma}

\begin{proof}
Let $E/N$ denote the field extension cut out by $\chi$. Let $E^{\Gal}/k$ denote the Galois closure of $E/k$. Since $E/k$ contains the quadratic subextension $N/k$, the possibilities for $\Gal(E^{\Gal}/k)$ are $\cyc 2\times \cyc 2$, $\cyc 4$, or $\dih 4$. In the first two cases, the character~$\chi$ extends to a character $\tilde \chi$ of $\Gal(E^{\Gal}/k)$ such that $\tilde\chi^ 4=1$. By $iii)$, we have that $\varrho_{A',\ell}\simeq \tilde\chi\otimes \varrho_{A,\ell}$. If $g=3$, then $\tilde \chi^6=1$, and hence $\tilde \chi$ must be quadratic. 
If $g=2$, the lemma follows from Proposition~\ref{proposition: iftwistquadratic} and Lemma~\ref{lemma: Cnextensions} (note that $\tilde \chi|_K$ is indeed quadratic). Suppose finally that $\Gal(E^{\Gal}/k)\simeq \dih 4$. Then there exists a biquadratic extension $F/k$ such that $FN=E^{\Gal}$. 
Therefore, there exists a (quadratic) character $\psi$ of $\Gal(F/k)$ such that $\psi|_N=\chi|_N$. By $iii)$, we have that $\varrho_{A',\ell}\simeq \psi\otimes \varrho_{A,\ell}$, which completes the proof of the lemma.
\end{proof}

\subsection{Products of pairwise geometrically nonisogenous elliptic curves}\label{section: productsecCM}
In this section we prove that the local-global QT principle holds for products of pairwise geometrically nonisogenous elliptic curves. We resume the notations from the previous section.
The following lemma will be used in this and later sections.

\begin{lemma}\label{lemma: splittingoverk}
Let $A$ be an abelian variety defined over $k$. Suppose that $A_\Qbar$ is isogenous to the product $B \times C$ of abelian varieties $B,C$ defined over $\Qbar$ such that
\begin{equation}\label{equation: nointertwining}
\Hom(B,C^s)=0
\end{equation}
for all $s \in G_{k}$. Then there exist abelian varieties $B',C'$ defined over $k$ such that $A$ is isogenous to $B' \times C'$.
\end{lemma}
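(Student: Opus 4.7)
My plan is to descend the decomposition $A_\Qbar \sim B \times C$ to $k$ by producing a $G_k$-invariant central idempotent of the semisimple algebra $\End(A_\Qbar)\otimes \Q$ that projects onto the $B$-part. I would start by invoking Poincar\'e complete reducibility to write $A_\Qbar \sim \prod_j S_j^{n_j}$ with pairwise non-isogenous simple abelian varieties $S_j$ over $\Qbar$. This gives a decomposition
$$
\End(A_\Qbar)\otimes \Q \simeq \prod_j M_{n_j}(\End(S_j)\otimes \Q)
$$
with central primitive idempotents $e_j$. The group $G_k$ acts on $\End(A_\Qbar)\otimes \Q$ by ring automorphisms and hence permutes the $e_j$ according to the permutation it induces on the isogeny classes $[S_j]$: one has $e_j^s=e_{j'}$ whenever $S_j^s \sim S_{j'}$.

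The key step is to rewrite the hypothesis in this language. Let $I_B$ (resp.\ $I_C$) be the set of indices $j$ for which $S_j$ appears in $B$ (resp.\ in $C$), so that $I_B \sqcup I_C$ exhausts all $j$. The assumption $\Hom(B,C^s)=0$ for every $s \in G_k$ says exactly that no simple factor of $B$ is isogenous to any simple factor of any $C^s$; equivalently, the $G_k$-orbit of every $S_{j'}$ with $j'\in I_C$ remains inside $\{S_j : j\in I_C\}$. Hence both $I_C$ and $I_B$ are unions of $G_k$-orbits, and therefore
$$
e_B := \sum_{j \in I_B} e_j \in \End(A_\Qbar)\otimes \Q
$$
is $G_k$-invariant.

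I would then pick $n\geq 1$ with $f := n\cdot e_B \in \End(A_\Qbar)$; the $G_k$-invariance of $e_B$ gives $f \in \End(A_\Qbar)^{G_k}=\End(A)$ by Galois descent. Setting $B' := f(A)$, an abelian subvariety of $A$ defined over $k$, one has $B'_\Qbar \sim B$ because multiplication by $n$ is an isogeny and $e_B$ projects onto the $B$-part up to isogeny. Finally, Poincar\'e reducibility applied to $B' \subseteq A$ \emph{over the field $k$} yields a complementary abelian subvariety $C' \subseteq A$ with $A \sim B' \times C'$ over $k$, completing the proof (one automatically has $C'_\Qbar \sim C$, though this is not needed for the statement).

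The only subtle point will be verifying that vanishing of $\Hom(B,C^s)$ for every $s$ is really equivalent to the $G_k$-stability of the partition $\{I_B, I_C\}$; once that is established, everything else is the standard Galois descent of endomorphisms of an abelian variety together with Poincar\'e reducibility in its $k$-rational form.
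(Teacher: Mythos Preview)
Your argument is correct, and it takes a genuinely different route from the paper's proof. The paper proceeds geometrically: fixing an isogeny $\varphi\colon B\times C\to A_\Qbar$, it lets $B'$ be the abelian subvariety of $A_\Qbar$ generated by $\varphi(B)$ together with all of its Galois conjugates $\varphi(B)^s$, and defines $C'$ similarly; both are then visibly defined over $k$. The hypothesis \eqref{equation: nointertwining} forces $\Hom(B',C')=0$, so the addition map $B'\times C'\to A$ has finite kernel, and a dimension count finishes. Your approach is instead algebraic: you pass to the decomposition of $\End(A_\Qbar)\otimes\Q$ into simple factors, translate the hypothesis into $G_k$-stability of the index sets $I_B$ and $I_C$, and descend the resulting central idempotent to $\End(A)\otimes\Q$ before invoking Poincar\'e reducibility over $k$. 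Both arguments are standard; the paper's is shorter and avoids any analysis of the endomorphism algebra, while yours makes more transparent why the hypothesis is exactly what is needed and generalizes immediately to decompositions with more than two factors. Your identification of the ``only subtle point'' is accurate and your sketch of its resolution (using $s=\id$ to get $I_B\cap I_C=\varnothing$, then observing that the $G_k$-orbit of each $S_{j'}$ with $j'\in I_C$ stays in $I_C$ since $A$ is defined over $k$) is correct.
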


\begin{proof}
Choose an isogeny $\varphi \colon B\times C\rightarrow A_\Qbar$. Let $B'$ denote the abelian subvariety of $A_\Qbar$ generated by $\varphi(B)$ and its Galois conjugates $\varphi(B)^s$ for $s \in G_k$. Define $C'$ analogously. Note that $B'$ and $C'$ are defined over $k$. By \eqref{equation: nointertwining}, we have $\Hom(B',C')=0$, and hence the addition map $\psi \colon B'\times C'\rightarrow A$ has finite kernel. Therefore
$$
\dim(A)\geq \dim(B')+\dim(C')\geq \dim(B)+\dim(C)=\dim(A)\,.
$$
Hence $A$ and $B'\times C'$ have the same dimension, and thus $\psi$ is an isogeny.
\end{proof}

By the above lemma and Theorem~\ref{theorem: serre}, the local-global QT principle holds for products of pairwise geometrically nonisogenous elliptic curves one of whose factors does not have CM. From now on, we focus on the case that all factors have CM.

\begin{lemma}\label{lemma: LpolyprodCM}
Suppose that $A$ is isogenous to the product of $g$ elliptic curves $E_i$ with CM, say by $M_i$, and pairwise not $\Qbar$-isogenous. Then there is a subset $\Sigma\subseteq \Sigma_k$ of density $1$ consisting of primes of absolute residue degree $1$ and of good reduction for $A$ such that for every $\p\in \Sigma$, of norm $p$, we have 
\begin{equation}\label{equation: localfactorprodCM}
L_\p(A,T)=\prod_{i=1}^g(1-\alpha_i T)(1-p\alpha_i^{-1} T)\,,
\end{equation} 
where the reciprocal roots $\alpha_i\in \Qbar$ satisfy:
\begin{enumerate}[i)]
\item If $\p$ splits in $M_i$, then $\alpha_i \in M_i-\Q$ and $\alpha_i/\sqrt p$ is not a root of unity.\vspace{0,1cm}
\item If $\p$ is inert in $M_i$, then $\alpha_i=\sqrt{-p}$ and $\alpha_i\not \in M_j$ for any $j$.
\end{enumerate}
\end{lemma}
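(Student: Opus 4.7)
The plan is to first use Lemma~\ref{lemma: splittingoverk} to reduce to the case $A\sim \prod_i E_i$ over~$k$ with each $E_i$ defined over $k$, and then to obtain $\Sigma$ from a natural density-$1$ pool of primes by excising a density-zero subset (for condition~(i)) and a finite subset (for condition~(ii)).

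For the reduction, observe that since the $E_i$ are pairwise non-$\Qbar$-isogenous CM elliptic curves, their CM fields $M_i$ must be pairwise distinct imaginary quadratic fields. Each such field is Galois over $\Q$, so for every $s\in G_k$ the conjugate $E_i^s$ still has CM by $M_i$ and is therefore $\Qbar$-isogenous to $E_i$. Consequently $\Hom(E_i, E_j^s)=0$ whenever $i\neq j$, and an iterative application of Lemma~\ref{lemma: splittingoverk} (peeling off one factor at a time) yields a $k$-isogeny $A\sim \prod_i E_i$ with each $E_i$ defined over $k$. Starting from the subset $\Sigma_0\subseteq \Sigma_k$ of primes $\p$ of good reduction for $A$, of absolute residue degree~$1$, and unramified in every $M_i$---a density-$1$ subset---the identity $L_\p(A,T)=\prod_i L_\p(E_i,T)$ together with the standard shape of the $L$-factor of a CM elliptic curve at a degree-one prime produces the factorization~\eqref{equation: localfactorprodCM}. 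Moreover, for $p=\Nm(\p)$ split in $M_i$, the curve $E_{i,\p}$ is ordinary, the Frobenius lies in the CM order, and $\alpha_i\in M_i\setminus \Q$ (the containment $\alpha_i\in \Q$ would force $\alpha_i^2=p$, contradicting primality). For $p$ inert in $M_i$, the curve $E_{i,\p}$ is supersingular, $L_\p(E_i,T)=1+pT^2$, and one chooses $\alpha_i:=\sqrt{-p}$. The relation $\sqrt{-p}\in M_j$ forces $M_j=\Q(\sqrt{-p})$, which pins down $p$ for each~$j$; excluding the resulting finitely many primes handles condition~(ii).

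The main obstacle is condition~(i): for each $i$, $\alpha_i/\sqrt p$ must fail to be a root of unity on a density-$1$ set of split primes. Since $(\alpha_i/\sqrt p)^2=\alpha_i^2/p\in M_i$, any root of unity value $\zeta$ attained must lie in the finite group of roots of unity of $M_i$. For each such fixed $\zeta$, Hecke's equidistribution theorem applied to the Grossencharacter of $E_i$, exactly as in the proof of Lemma~\ref{lemma: ordinaryCM}, implies that the set of primes with $\alpha_i^2/p=\zeta$ has density zero; a finite union over the admissible $\zeta$ and over the indices $i$ remains density zero. Excising this set, together with the finite set from condition~(ii), from $\Sigma_0$ yields the desired density-$1$ subset $\Sigma$.
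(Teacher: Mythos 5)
Your proof is correct. It takes a somewhat different route from the paper's (which is only a two-line sketch): the paper reads the local factors off the Hecke-character description of Theorem \ref{theorem: FiteGuitart} --- for $\p$ split in $kM_i/k$ one has $\varrho_{E_i,\ell}|_{kM_i}\simeq \chi\oplus\chibar$ with $\chi$ an $M_i$-rational Hecke character, which yields $\alpha_i\in M_i$ directly, and for $\p$ inert one uses $\varrho_{E_i,\ell}\simeq \Ind_{kM_i}^{k}(\chi)$ to force the trace to vanish --- whereas you argue on the reductions themselves, via Deuring's criterion (split $\Leftrightarrow$ ordinary, inert $\Leftrightarrow$ supersingular) and the injectivity of reduction of endomorphisms to place $\alpha_i$ in $M_i$. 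Your route is more classical and bypasses Theorem \ref{theorem: FiteGuitart} entirely; the paper's is shorter once that theorem is in hand. Both routes need the same input for the root-of-unity clause of (i), namely Hecke equidistribution exactly as in Lemma \ref{lemma: ordinaryCM}, and you supply it correctly; likewise your preliminary reduction via Lemma \ref{lemma: splittingoverk}, justified by the observation that pairwise non-$\Qbar$-isogenous CM elliptic curves have distinct (hence Galois-stable) CM fields, matches how the lemma is actually invoked alongside this one in Proposition \ref{proposition: productsec}. Two cosmetic points only: in the inert case the identity $L_\p(E_i,T)=1+pT^2$ requires $p\geq 5$ (for $p\in\{2,3\}$ a supersingular trace over $\F_p$ need not vanish), so those finitely many primes should also be excised; and in the root-of-unity argument it is $\zeta^2$, not $\zeta$ itself, that must lie in the group of roots of unity of $M_i$ --- this still confines $\zeta$ to a finite set, so the density-zero conclusion is unaffected.
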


\begin{proof}
If $\p$ splits in $kM_i$, consider the decomposition $\varrho_{E_i,\ell}|_{kM}\simeq \chi \oplus \overline \chi$ 
from Theorem~\ref{theorem: FiteGuitart} noting that $\chi$ is an $M_i$-rational Hecke character in this case; if~$\p$ is inert in $kM_i$, then use the description of $\varrho_{E_i}$ as the induction of $\chi$ from $kM_i$ down to $k$. We spare the details to the reader. 
\end{proof}

\begin{proposition}\label{proposition: productsec}
Suppose that \eqref{equation: main condition} holds for $A$ and $A'$, and that $A_\Qbar$ is isogenous to the product of $g$ nonisogenous elliptic curves with CM. Then $A$ and $A'$ are quadratic twists.
\end{proposition}

\begin{proof}
By Corollary \ref{corollary: locquad geomiso} and Lemma \ref{lemma: splittingoverk}, there exist elliptic curves $E_i,E'_i$ defined over $k$ such that $E_{i,\Qbar}$ and $E'_{i,\Qbar}$ are isogenous, and $A\sim \prod_i E_i$ and $ A'\sim \prod_i E_i'$.
Suppose first that $g=2$. Lemma \ref{lemma: splittingoverk} also implies that if $M_i$ denotes the CM field of $E_i$, then the endomorphism field $K$ of $A$ is $kM_1 M_2$.  From the description of $L_\p(A,T)$ from Lemma~\ref{lemma: LpolyprodCM}, we see that $E_i$ and $E_i'$ satisfy \eqref{equation: main condition}. Then, by Corollary~\ref{corollary: ellipticcurves}, there are quadratic characters $\varphi_i$ of $G_k$ such that $\varrho_{E_i',\ell}\simeq \varphi_i \otimes \varrho_{E_i,\ell}$. By Theorem \ref{theorem: mild refinement serre}, we must have $\varphi_1|_K=\varphi_2|_K$. Let now $\chi_i$ denote the nontrivial character of $\Gal(kM_i/k)$ if this group is nontrivial, and the trivial character otherwise. There exist integers $\delta_1,\delta_2\in \{0,1\}$ such that $\varphi_2=\chi_1^{\delta_1}\chi_2^{\delta_2}\varphi_1$. Then
$$
\varrho_{A',\ell}\simeq (\varphi_1\otimes \varrho_{E_1,\ell}) \oplus (\chi_1^{\delta_1}\chi_2^{\delta_2}\varphi_1\otimes\varrho_{E_2,\ell})\simeq \chi_1^{\delta_1}\varphi_1\otimes (\varrho_{E_1,\ell} \oplus \varrho_{E_2,\ell})\simeq \chi_1^{\delta_1}\varphi_1\otimes \varrho_{A,\ell}\,,
$$
where we have used that $\chi_i\otimes \varrho_{E_i,\ell}\simeq \varrho_{E_i,\ell}$.

Suppose now that $g\geq 3$. By hypothesis, there exists a density 1 subset $\Sigma \subseteq \Sigma_k$ such that for every $\p \in \Sigma$ there exists $\epsilon_\p\in \{\pm 1\}$ such that $L_\p(A',T)=L_\p(A,\epsilon_\p T)$.
Let $A_i:=E_1 \times E_i$ and $A_i':=E_1'\times E_i'$ for every $i\geq 2$. From the description of $L_\p(A,T)$ from Lemma~\ref{lemma: LpolyprodCM}, by shrinking $\Sigma$ if necessary, we may assume that 
$L_\p(A'_i,T)=L_\p(A_i,\epsilon_\p T)$
for every $\p\in \Sigma$.
By the $g=2$ case, there exists a quadratic character $\varphi_i$ of $G_k$ such that $A_i'\sim A_{i,\varphi_i}$. Hence 
\begin{equation}\label{equation: igualtat anodina}
\epsilon_\p\cdot \Tr(\varrho_{A_i,\ell}(\Frob_\p))=\Tr(\varrho_{A'_i,\ell}(\Frob_\p))=\varphi_i(\Frob_\p)\cdot\Tr(\varrho_{A_i,\ell}(\Frob_\p))
\end{equation}
for every $\p \in \Sigma$. Moreover, by Hecke's equidistribution theorem, there is a density~1 subset $\Sigma'\subseteq \Sigma$ such that for every $\p \in \Sigma'$, we have $\Tr(\varrho_{A_i,\ell}(\Frob_\p))\not=0$ unless $\p$ is both inert in~$M_1$ and~$M_i$. Let $\Sigma_i\subseteq \Sigma'$ be the subset of those $\p$ which are split in $M_1$ or split in~$M_i$. Note that $\Sigma_2 \cap \Sigma_i$ has density equal to $5/8$. 
From \eqref{equation: igualtat anodina}, for every $\p  \in \Sigma_2 \cap \Sigma_i$, we have
$\varphi_i(\Frob_\p)=\epsilon_\p=\varphi_2(\Frob_\p)$.
Since $\varphi_i$ and $\varphi_2$ are quadratic and coincide on a set of Frobenius elements of density~$>1/2$, they must coincide. Hence $A'\sim \prod_i E_i' \sim \prod_{i} E_{i,\varphi_2}\sim A_{\varphi_2}$.
\end{proof}

\section{Proof of the main theorem}\label{section: proof}

The absolute type of an abelian variety $A$ is the isomorphism class of the $\R$-algebra $\End(A_\Qbar)\otimes_\Z\R$. We borrow from \cite[\S4.1]{FKRS12} the labels $\Ab,\dots, \Fb$ for the absolute type of an abelian surface. See \cite[\S3.2.1]{FKS21a} or \cite[\S3.5]{FKS21c} for the description of the absolute types $\Ab,\dots, \Nb$ of an abelian threefold. Note that if \eqref{equation: main condition} holds for $A$ and~$A'$, then they have the same absolute type by Corollary~\ref{corollary: locquad geomiso}. Recall that they also have the same endomorphism field by Lemma \ref{lemma: endomorphismfield}.
To complete the proof of Theorem \ref{theorem: Main} we will distinguish the cases $g=2$ and $g=3$.
Before, we recall a construction that will be used in several proofs (in fact, it has already appeared implicitly via Theorem~\ref{theorem: FiteGuitart}). We refer to \cite[Chap. II]{Rib76} for further details.

\begin{remark}\label{remark: lambda adic reps}
Let $B$ be an abelian variety defined over $k$ of dimension $g$, and let $M$ be a number field. Suppose that there is a $\Q$-algebra embedding $M\hookrightarrow \End^0(B)$. 
Choose a prime $\ell$ totally split in $M$, so that the $[M:\Q]$ embeddings $\lambda_i\colon M \hookrightarrow \Qbar_\ell$, take values in $\Q_\ell$.  
Define 
\begin{equation}
V_{\lambda_i}(B):=V_\ell(B)\otimes_{M\otimes \Q_\ell,\lambda_i}\Q_\ell\,,
\end{equation}
where $\Q_\ell$ is being regarded as an $M\otimes \Q_\ell$-module via~$\lambda_i$. It has dimension $2g/[M:\Q]$ as a vector space over $\Q_\ell$. We let $G_k$ act naturally on $V_\ell(B)$ and trivially on $\Q_\ell$. With this action, there is an isomorphism $V_\ell(B)\simeq \bigoplus_i V_{\lambda_i}(B)$.
\end{remark}

\subsection{Abelian surfaces} 
Throughout this section assume $g=2$. Theorem \ref{theorem: Main} follows from Theorem \ref{theorem: serre} or Corollary~\ref{corollary: trivend} if the absolute type of~$A$ and~$A'$ is \Ab, from Theorem \ref{theorem: serre} if it is \Cb, from Theorem~\ref{theorem: nonCMpower} if it is \Eb, and from Theorem~\ref{theorem: CMpower} if it is \Fb. We will complete the proof of Theorem~\ref{theorem: Main} in the remaining cases. 

\subsubsection{Absolute type \Db}\label{section: TypeD1} 
The case that $A_\Qbar$ is the product of two elliptic curves is covered by Proposition \ref{proposition: productsec}. Assume from now on that the geometric endomorphism algebra of $A$ is a quartic CM field, which we will call~$M$. By \cite[p. 515, Proposition 3]{Shi71} and \cite[p.64]{Shi98}, one of the following three cases occurs:
\begin{enumerate}[i)]
\item $K=k$ and $\End^0(A) \simeq M$.
\item $K/k$ is quadratic and $\End^0(A)$ is a real quadratic field.
\item $K/k$ is cyclic of degree $4$ and $\End^0(A)\simeq \Q$. 
\end{enumerate}
Let $\ell$ be a prime totally split in $M$, and let $\lambda_1,\dots,\lambda_4$ be the embeddings of $M$ into~$\Q_\ell$. The following lemma is an easy consequence of Faltings isogeny theorem.

\begin{lemma}\label{lemma: Vi stronglyabsirred}
We have $\Hom_{G_K}(V_{\lambda_i},V_{\lambda_j})=0$ for $i\not=j$, and $\End_{G_K}(V_{\lambda_i})\simeq \Q_\ell$.
\end{lemma}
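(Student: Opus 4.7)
The plan is to combine Faltings's isogeny theorem with a dimension count. First I would invoke Faltings to identify
$$
\End_{G_K}(V_\ell(A)) \;\simeq\; \End(A_K)\otimes_\Z \Q_\ell \;\simeq\; M\otimes_\Q \Q_\ell,
$$
where the second isomorphism uses that $K$ is the endomorphism field of $A$ together with the hypothesis $\End(A_\Qbar)\otimes \Q\simeq M$. Since $\ell$ is totally split in $M$, the embeddings $\lambda_1,\dots,\lambda_4$ yield a decomposition $M\otimes \Q_\ell\simeq \prod_{i=1}^4 \Q_\ell$ into a product of fields, and the corresponding orthogonal idempotents commute with the $G_K$-action on $V_\ell(A)$.

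Next I would observe that, by construction, $V_i$ is precisely the image of the $i$-th idempotent acting on $V_\ell(A)$, so the decomposition $V_\ell(A)\simeq \bigoplus_i V_i$ is $G_K$-stable. As noted in Remark~\ref{remark: lambda adic reps}, each $V_i$ has $\Q_\ell$-dimension $2g/[M:\Q]=1$, and therefore automatically $\End_{G_K}(V_i)\simeq \Q_\ell$, which gives the second assertion.

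For the first assertion, I would use the identification
$$
\End_{G_K}(V_\ell(A)) \;=\; \bigoplus_{i,j}\Hom_{G_K}(V_i,V_j).
$$
The left-hand side has $\Q_\ell$-dimension $4=\dim_\Q M$ by the opening step. The diagonal contribution $\bigoplus_i \End_{G_K}(V_i)$ already accounts for all $4$ dimensions, forcing $\Hom_{G_K}(V_i,V_j)=0$ whenever $i\neq j$. There is no real obstacle here: the only substantive input is Faltings's theorem applied over~$K$, and everything else is a dimension bookkeeping consequence of $\ell$ being split in~$M$.
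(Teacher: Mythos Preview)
Your proof is correct and follows essentially the same approach as the paper: both apply Faltings's isogeny theorem to compute $\dim_{\Q_\ell}\End_{G_K}(V_\ell(A))=4$ and then perform the dimension bookkeeping on $\bigoplus_{i,j}\Hom_{G_K}(V_i,V_j)$. Your version is slightly more explicit in noting that each $V_i$ is $1$-dimensional (so $\End_{G_K}(V_i)\simeq\Q_\ell$ is automatic), whereas the paper leaves this implicit, but the argument is the same.
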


Set $n=4/[K:k]=[\End^ 0(A):\Q]$. By \cite[Thm. 3, Rem. 2, p. 186]{Mil72} (see also the first part of the proof of \cite[Thm. 2.9]{FG20}), there is an isomorphism 
\begin{equation}\label{equation: moduledescription}
V_\ell(A)\simeq \bigoplus_{i=1}^n \Ind_K^k(V_{\lambda_i})
\end{equation}
 of $G_k$-modules, where we assume that $\lambda_1,\dots,\lambda_n$ have been ordered so that their restrictions to $\End^0(A)$ are all distinct.
Note that~$A$ falls in case i) (resp. ii), iii)) if and only if so does~$A'$. 

\begin{proposition}\label{proposition: complexmultiplication}
If $\End^0(A_\Qbar)$ is a quartic CM field, then Theorem \ref{theorem: Main} holds.
\end{proposition}

\begin{proof}
In case i), the proposition follows from Theorem \ref{theorem: serre}. Note that $\varrho_{A,\ell}$ satisfies the hypotheses of this theorem by Lemma \ref{lemma: Vi stronglyabsirred}. In case ii) (resp. iii)), the proposition follows from case i) (resp. ii)) and Lemma \ref{lemma: greatly simplifying lemma}.
Note that we can apply this lemma since $A$ has Frobenius traces concentrated in $K$ as it follows from \eqref{equation: moduledescription}. 
\end{proof}

\begin{remark}
An alternative way to prove Proposition \ref{proposition: complexmultiplication} in case i) consists in choosing a prime $\ell$ inert in $M$. Then $V_\ell(A)$ is an $M_\lambda$-module of dimension $1$, and hence strongly absolutely irreducible. Therefore there exists a character $\varphi$ of $G_k$ such that $\varrho_{A',\ell}\simeq \varphi \otimes \varrho_{A,\ell}$, and then the proposition follows from Proposition \ref{proposition: iftwistquadratic}.
\end{remark}

\subsubsection{Absolute type \Bb}\label{section: TypesBC}

Suppose that $A$ has absolute type \Bb, that is, $A_\Qbar$ is either:
\begin{enumerate}[i)]
\item isogenous to the product of two nonisogenous non CM elliptic curves, or 
\item simple and $M:=\End^0(A_\Qbar)$ is a real quadratic field.
\end{enumerate}
The endomorphism field $K/k$ is at most quadratic. If $K=k$, then Theorem \ref{theorem: Main} follows from Theorem \ref{theorem: serre}. Assume that $K/k$ is quadratic. Let $\ell$ be a prime, which we assume split in~$M$ if we are in case i). Then, there are strongly absolutely irreducible $\ell$-adic representations $\varrho,\varrho'\colon G_K\rightarrow \GL_2(\Q_\ell)$ such that
$\varrho_{A,\ell}\simeq \Ind_K^k (\varrho)$ and $\varrho_{A',\ell}\simeq \Ind_K^k (\varrho')$.
In case i), we have that $A_K$ is isogenous to the product of two elliptic curves $E_1, E_2$ defined over $K$, and we may take $\varrho=\varrho_{E_i,\ell}$; in case ii), let $\varrho$ be the representation afforded by $V_\lambda(A)$, where $\lambda$ is an embedding of~$M$ into~$\Q_\ell$. Theorem \ref{theorem: Main} then follows from the case $K=k$ and Lemma \ref{lemma: greatly simplifying lemma}. 

\subsection{Abelian threefolds}

In this section we assume that $g=3$. If the absolute type of $A$ is \Ab, then Theorem \ref{theorem: Main} follows from Corollary~\ref{corollary: trivend} or Theorem~\ref{theorem: serre}; if it is \Mb, it follows from Theorem~\ref{theorem: nonCMpower}; if it is \Nb, it follows from Theorem~\ref{theorem: CMpower}.

\begin{lemma}
If $A$ has absolute type \Cb, \Db, \Gb, \Ib, or \Kb, then Theorem \ref{theorem: Main} holds.
\end{lemma}

\begin{proof}
By Lemma \ref{lemma: splittingoverk}, there exist abelian varieties $B,B'$ defined over $k$ such that~$A$ is isogenous to $B\times B'$ and $\End(B_\Qbar) \simeq \Z$. By Faltings isogeny theorem, there exist $\Q_\ell[G_k]$-modules $V,V'$ such that $V_\ell(A)\simeq V \oplus V'$ and $\End_{G_F}(V)\simeq \Q_\ell$ for every finite extension $F/k$. Then the lemma follows from Theorem~\ref{theorem: serre}.
\end{proof}

The next two results consider abelian threefolds that contain an elliptic factor~$E$ with CM by $M$ that is not geometrically isogenous to any other factor.

\begin{proposition}\label{proposition: keycaseL}
Let $B$ be an abelian surface defined over $k$, and $\chi$ and $\psi$ be quadratic characters. Suppose that $\Hom(E_\Qbar,B_{\Qbar})=0$ and that \eqref{equation: main condition} holds for $A:=E\times B$ and $A':=E_\chi\times B_\psi$. Then $A$ and $A'$ are quadratic twists.
\end{proposition}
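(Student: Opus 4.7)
The plan is to show $\eta:=\chi\psi^{-1}\in\Xi_E\cdot\Xi_B$, where $\Xi_X$ denotes the group of quadratic characters $\nu$ of $G_k$ with $X_\nu\sim X$; then any $\varepsilon\in\chi\Xi_E\cap\psi\Xi_B$ will satisfy $A_\varepsilon\sim A'$. Since $E$ has CM by $M$, $\Xi_E=\{\triv\}$ if $M\subseteq k$ and $\Xi_E=\{\triv,\chi_M\}$ otherwise, where $\chi_M$ cuts out $kM/k$.

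First I would extract pointwise information from the hypothesis. Since $\Hom(E_\Qbar,B_\Qbar)=0$, the locus where $\varrho_{E,\ell}$ and $\varrho_{B,\ell}$ share an eigenvalue is a proper Zariski-closed subvariety of the image of $\varrho_{E\times B,\ell}$, so by \cite[Thm. 8, Thm. 10]{Ser81} it is hit by Frobenii on a density-zero subset. The local identity $L_\p(A',T)=L_\p(A,\epsilon_\p T)$ thus decouples into $L_\p(E,\chi(\p)T)=L_\p(E,\epsilon_\p T)$ and $L_\p(B,\psi(\p)T)=L_\p(B,\epsilon_\p T)$ for almost all $\p$, giving the dichotomy: for almost all $\p$, either $\chi(\p)=\psi(\p)$, or $a_\p(E)=0$, or $a_\p(B)=0$.

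If $M\subseteq k$, Hecke's equidistribution theorem \cite{Hec20} applied to the Gr\"ossencharacter of $E$ shows that $\{a_\p(E)=0\}$ has density zero; the dichotomy then gives $a_\p(B)=0$ for almost all $\p$ with $\eta(\p)=-1$, whence $V_\ell(B_\eta)\simeq V_\ell(B)$ (by Chebotarev, semisimplicity, and Faltings), so $\eta\in\Xi_B$ and $\varepsilon=\chi$ works. If $M\not\subseteq k$ and $\eta\notin\{\triv,\chi_M\}$, base-changing to $kM$ brings the problem to the previous case for $A_{kM},A'_{kM}$ (the hypothesis persists on the density-one set of primes of $kM$ above split primes of $k$), and produces $\eta|_{kM}\in\Xi_{B_{kM}}$.

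To descend, the projection formula yields the identity
\[
V_\ell(B)\otimes\eta\ \oplus\ V_\ell(B)\otimes\chi_M\eta\ \simeq\ \Ind_{kM}^k\!\bigl(V_\ell(B)|_{kM}\bigr)\ \simeq\ V_\ell(B)\ \oplus\ V_\ell(B)\otimes\chi_M
\]
of $G_k$-representations. The hypothesis $\Hom(E_\Qbar,B_\Qbar)=0$ rules out $V_\ell(B)$ containing a $2$-dimensional $G_k$-irreducible constituent induced from $G_{kM}$, since such a constituent would realize an elliptic factor of $B$ with CM by $M$, isogenous over $\Qbar$ to $E$. A Krull--Schmidt matching of irreducible constituents then forces $V_\ell(B)\otimes\eta\simeq V_\ell(B)$ (giving $\varepsilon=\chi$) or $V_\ell(B)\otimes\eta\simeq V_\ell(B)\otimes\chi_M$ (giving $\varepsilon=\chi\chi_M$), except in an exceptional ``mixed'' case where $V_\ell(B)$ decomposes over $k$ as the sum of Tate modules of two elliptic curves with CM by imaginary quadratic fields different from $M$; in that case $A$ is isogenous over $\Qbar$ to a product of three pairwise non-isogenous (over $\Qbar$) CM elliptic curves, and the conclusion follows from Proposition~\ref{proposition: productsec}. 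The principal obstacle is making the Krull--Schmidt matching unambiguous when $V_\ell(B)$ is $G_k$-reducible and, where required, reducing the exceptional case cleanly to Proposition~\ref{proposition: productsec}.
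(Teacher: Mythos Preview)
Your plan and the paper's proof coincide up to the central identity. After setting $\eta=\chi\psi$, both arguments first establish $B_{kM}\sim B_{kM,\eta}$ (you via the pointwise dichotomy and Hecke over $kM$; the paper via Theorem~\ref{theorem: serre} over $kM$, after normalizing $\chi=\triv$), and then deduce, by $\Ind_{kM}^k\Res_{kM}^k$ (equivalently, restriction of scalars), the $G_k$-identity
\[
V_\ell(B)\,\oplus\, V_\ell(B)\otimes\chi_M \;\simeq\; V_\ell(B)\otimes\eta \,\oplus\, V_\ell(B)\otimes\chi_M\eta .
\]
The paper writes this as $B\times B_\varphi\sim B_\psi\times B_{\psi\varphi}$ and finishes by a short case split on the isogeny decomposition of $B$ over $k$: (a) $B$ simple; (b) $B\sim C\times D$ with $C,D$ not quadratic twists of each other; (c) $B\sim C\times C_\xi$. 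Each case is a direct isogeny-matching, the only subtlety being in (b), where the ``crossed'' possibility $C\sim C_\psi$, $D\sim D_{\psi\varphi}$ is excluded because it would force $E,C,D$ to have CM by fields cut out by $\varphi,\psi,\varphi\psi$, which cannot all be imaginary quadratic.

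Your descent via Krull--Schmidt is a legitimate alternative in spirit, but the supporting claims are not right as stated. The assertion that a $2$-dimensional $G_k$-irreducible constituent of $V_\ell(B)$ induced from $G_{kM}$ ``would realize an elliptic factor of $B$ with CM by $M$'' is false: irreducible $\Q_\ell[G_k]$-summands of $V_\ell(B)$ need not be Tate modules of abelian subvarieties at all (for $B$ simple with real multiplication by $F$ defined over $k$ and $\ell$ split in $F$, the summands $V_{\lambda_i}(B)$ are $2$-dimensional yet correspond to no elliptic factor). More importantly, your ``exceptional mixed case'' does not exhaust the obstructions to concluding $V_\ell(B)\otimes\eta\simeq V_\ell(B)$ or $V_\ell(B)\otimes\chi_M$ from the displayed identity: one must also analyse size-$4$ orbits of $1$-dimensional constituents under $\{\triv,\chi_M,\eta,\chi_M\eta\}$ and rule out multiplicity patterns such as $(2,0,1,1)$, which requires invoking the constraint $\dim_\Q(\End(B)\otimes\Q)\in\{1,2,4,8\}$; nor does your appeal to Proposition~\ref{proposition: productsec} cover the case where the two elliptic factors of $B$ share a CM field. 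You concede the obstacle in your final sentence; the paper's three-case split on $B$ sidesteps it entirely and is the cleaner way to close the argument.
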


\begin{proof}
We may reduce to the case that $\chi=1$. Indeed, this particular case of the proposition applies to the pair $A$ and $A'_\chi$, and $A$ and $A'_\chi$ are quadratic twists if and only if so are $A$ and $A'$. Assume henceforth that $\chi=1$.

We will assume that $k\not=kM$, as otherwise the proposition follows from Theorem~\ref{theorem: serre}. Also by Theorem~\ref{theorem: serre}, there is a quadratic character $\varepsilon$ of $G_{kM}$ such that $A_{kM,\varepsilon}\sim A'_{kM}$. In fact, the proof of the theorem shows that $\varepsilon$ satisfies $E_{kM}\sim E_{kM,\varepsilon}$, which implies that $\varepsilon$ must be trivial. Let $\varphi$ denote the nontrivial character of $\Gal(kM/k)$. We will assume that $\psi\not=1$, as otherwise there is nothing to show. We will also assume that $\varphi\not=\psi$, as otherwise the result follows from $A'\sim E\times B_\varphi \sim E_\varphi \times B_\varphi \sim A_\varphi$.
By taking the restriction of scalars of $A_{kM}\sim A'_{kM}$ from $kM$ to $k$, we obtain
\begin{equation}\label{equation: restriction of scalars}
B\times B_\varphi \sim B_\psi \times B_{\psi\varphi}\,.
\end{equation}
We will distinguish three cases:
a) $B$ is simple (over $k$);
b) $B\sim C\times D$, where $C$ and $D$ are elliptic curves over $k$ which are not quadratic twists.
c) $B\sim C\times C_\xi$, where $C$ is an elliptic curve over $k$ and $\xi$ is a quadratic character. 

Suppose that a) holds. From \eqref{equation: restriction of scalars} we see that either $B\sim B_\psi$ or $B_\varphi\sim B_{\psi}$. In the first case, we obtain that $A\sim A'$, and in the latter case we have
$$
A' \sim E\times B_\psi \sim E_\varphi\times B_{\varphi} \sim   A_\varphi\,.
$$

Suppose that b) holds. Then \eqref{equation: restriction of scalars} implies that $C\sim C_\psi$ or $C\sim C_{\psi\varphi}$ and $D\sim D_\psi$ or $D\sim D_{\psi\varphi}$. We claim that there exists $i\in \{0,1\}$ such that $C\sim C_{\psi\varphi^ i}$ and $D\sim D_{\psi\varphi^ i}$. Indeed, otherwise $A$ would be the product of three elliptic curves with CM by the quadratic fields attached to the characters $\varphi,\psi,\varphi\psi\not =1$. But this is absurd, for the quadratic fields attached to these characters cannot all be simultaneously imaginary. By the claim, we have
$$
A'\sim E\times C_\psi\times D_\psi \sim E\times C_{\varphi^i}\times D_{\varphi^i}\sim A_{\varphi^i}\,. 
$$

Suppose finally that c) holds. In this case, \eqref{equation: restriction of scalars} amounts to
$$
(1+\xi)(1+\varphi)\Tr(\varrho_{C,\ell})=\psi(1+\xi)(1+\varphi)\Tr(\varrho_{C,\ell})\,.
$$
Since there exists a density 1 subset $\Sigma'$ of $\Sigma_{kM}$ such that for every $\p\in \Sigma'$ the trace of $\varrho_{C,\ell}(\Frob_\p)$ is nonzero, the above equality implies 
$$
1+\xi|_{kM}=\psi|_{kM}(1+\xi|_{kM})\,.
$$
Hence, either $\psi|_{kM}=1$ or $\psi|_{kM}=\xi|_{kM}$. We deduce that there exist $i,j\in \{0,1\}$ such that $\psi=\varphi^i\xi^j$, and then $A'\sim E\times C_\psi\times C_{\xi\psi}\sim E\times C_{\varphi^i}\times C_{\xi\varphi^i}\sim A_{\varphi^i}$.
\end{proof}

\begin{corollary}\label{corollary: CM factor}
Let $B$ and $B'$ be abelian surfaces defined over $k$, and $E$ and $E'$ be elliptic curves defined over $k$ with CM. If $\Hom(E_\Qbar,B_{\Qbar})=0$ and \eqref{equation: main condition} holds for $A:=E\times B$ and $A':=E'\times B'$, then $A$ and $A'$ are quadratic twists. 
\end{corollary}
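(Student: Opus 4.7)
The strategy is to reduce the corollary to Proposition~\ref{proposition: keycaseL} by first proving that $E'$ is a quadratic twist of $E$ over $k$ and $B'$ is a quadratic twist of $B$ over $k$, i.e., by upgrading the hypothesis \eqref{equation: main condition} for $(A,A')$ to the analogous condition for each of the subpairs $(E,E')$ and $(B,B')$.

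First, by Corollary~\ref{corollary: locquad geomiso}, there exists a finite Galois extension $L/k$ with $A_L\sim A'_L$; combined with $\Hom(E_\Qbar,B_\Qbar)=0$ and the fact that $E$ and $E'$ are $\Qbar$-isogenous (they are CM elliptic curves with the same CM field), uniqueness of the isogeny decomposition yields $E_L\sim E'_L$ and $B_L\sim B'_L$; in particular $\Hom(E'_\Qbar,B'_\Qbar)=0$. I would then show that $L_\p(E,T)$ and $L_\p(B,T)$ are coprime for a density one set of primes $\p$ (and similarly for $(E',B')$). Indeed, the condition that $g_E\in\Aut(V_\ell(E))$ and $g_B\in\Aut(V_\ell(B))$ share a common eigenvalue cuts out, via a resultant equation, a closed subvariety $W$ of the Zariski closure $G$ of the image of $G_k$ in $\Aut(V_\ell(E))\times\Aut(V_\ell(B))$; by Faltings isogeny theorem and $\Hom(E_\Qbar,B_\Qbar)=0$, the Tate modules $V_\ell(E)$ and $V_\ell(B)$ share no common irreducible constituent over any finite extension of $k$, forcing $W$ to be a proper closed subvariety of $G$. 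By \cite[Thm.~10]{Ser81}, the set of primes $\p$ with $\Frob_\p\in W$ has density zero. Using this coprimality, the factorization
$$
L_\p(E,T)L_\p(B,T)=L_\p(E',\epsilon_\p T)L_\p(B',\epsilon_\p T)
$$
valid with $\epsilon_\p\in\{\pm 1\}$ on a density one set of primes, together with unique factorization in $\Q[T]$, yields $L_\p(E,T)=L_\p(E',\epsilon_\p T)$ and $L_\p(B,T)=L_\p(B',\epsilon_\p T)$, proving that \eqref{equation: main condition} holds both for $(E,E')$ and for $(B,B')$.

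With this in hand, Corollary~\ref{corollary: ellipticcurves} applied to $(E,E')$ produces a quadratic character $\chi_1\colon G_k\to\{\pm 1\}$ with $E'\sim E_{\chi_1}$, while the dimension two case of Theorem~\ref{theorem: Main} (completed in \S\ref{section: TypeD1} and \S\ref{section: TypesBC}) applied to $(B,B')$ produces a quadratic character $\chi_2\colon G_k\to\{\pm 1\}$ with $B'\sim B_{\chi_2}$. Hence $A'\sim E_{\chi_1}\times B_{\chi_2}$, and Proposition~\ref{proposition: keycaseL} applied with $\chi=\chi_1$ and $\psi=\chi_2$ then yields that $A$ and $A'$ are quadratic twists of each other. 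The main obstacle is the coprimality step in the first paragraph; the remainder is a direct invocation of previously established results.
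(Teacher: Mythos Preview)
Your overall strategy matches the paper's: show that \eqref{equation: main condition} holds separately for $(E,E')$ and $(B,B')$, then invoke Corollary~\ref{corollary: ellipticcurves}, the $g=2$ case of Theorem~\ref{theorem: Main}, and finally Proposition~\ref{proposition: keycaseL}. The divergence is in how you separate the $E$- and $B$-factors of $L_\p(A,T)$, and there your argument has a genuine gap.

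The implication ``$V_\ell(E)$ and $V_\ell(B)$ share no common irreducible constituent over any finite extension $\Rightarrow$ $W$ is a proper closed subvariety of $G$'' is false. Sharing an eigenvalue at a given group element is much weaker than sharing a subrepresentation. Concretely, suppose $B_\Qbar$ contains a CM elliptic factor $E_1$ with CM field $M_1\neq M$; this is precisely absolute type~\Lb, one of the cases the corollary is meant to cover. On the connected component of $G$ corresponding to Frobenii inert in both $kM/k$ and $kM_1/k$, the $E$-block and the $E_1$-block both lie in the nontrivial Weyl coset of their respective tori, so each has characteristic polynomial $T^2-\det$; since $\det(\varrho_{E,\ell})=\det(\varrho_{E_1,\ell})$ is the cyclotomic character, the two blocks have \emph{identical} eigenvalues on that entire component. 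Thus $W$ contains a full connected component of $G$, and \cite[Thm.~10]{Ser81} gives no density-zero conclusion: indeed $L_\p(E,T)=L_\p(E_1,T)=1+\Nm(\p)T^2$ for a positive-density set of $\p$.

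The paper avoids this by treating split and inert primes in $kM/k$ separately. At inert $\p$ one has $L_\p(E,T)=L_\p(E',T)=1+\Nm(\p)T^2$ (both curves have CM by $M$), and this polynomial is invariant under $T\mapsto -T$, so the $E$-factors match for free and cancellation gives $L_\p(B',T)=L_\p(B,\epsilon_\p T)$. For split primes, rather than a resultant argument, the paper applies Theorem~\ref{theorem: serre} over $kM$, using a one-dimensional summand of $\varrho_{E,\ell}|_{kM}$ as the distinguished $\varrho_1$. Your Serre-style approach can in fact be made to work on the identity component (equivalently, for primes split in $kM$), but you must handle the inert primes by the direct observation above rather than by coprimality.
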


\begin{proof}
By Theorem \ref{theorem: serre}, there exists a quadratic character $\varphi$ of $G_{kM}$ such that
$$
E'_{kM}\times B'_{kM}\sim E_{kM,\varphi}\times B_{kM,\varphi}\,.
$$
Since $\Hom(E_\Qbar,B_{\Qbar})=0$, it follows that $E'_{kM}\sim E_{kM,\varphi}$ and $B'_{kM}\sim B_{kM,\varphi}$. In particular, for every prime $\p\in \Sigma_k$ split in $kM$, except for a density~$0$ set,~$E$ and~$E'$ (resp.~$B$ and~$B'$) are locally quadratic twists at $\p$. Similarly, for every $\p\in \Sigma_k$ inert in $kM$, except for a density 0 set, the facts that $L_\p(E,T)=1+\Nm(\p)T^2$ and that~$A$ and~$A'$ are locally quadratic twists at $\p$ imply that $E$ and $E'$ (resp.~$B$ and~$B'$) are locally quadratic twists at~$\p$. Hence, by Corollary \ref{corollary: ellipticcurves} and the case $g=2$ of Theorem \ref{theorem: Main}, there exist quadratic characters $\chi,\psi$ of~$G_k$ such that $E'\sim E_\chi$ and $B'\sim B_\psi$, and then the corollary follows from Proposition~\ref{proposition: keycaseL}. 
\end{proof}

\begin{corollary}
If $A$ has absolute type \Fb, \Jb, or \Lb, then Theorem \ref{theorem: Main} holds.
\end{corollary}

\begin{proof}
It suffices to note that by Lemma \ref{lemma: splittingoverk} we can apply Corollary \ref{corollary: CM factor}.
\end{proof}

\subsubsection{Absolute type \Bb} In this case $\End^0(A_\Qbar)$ is a quadratic imaginary field. If $K=k$, then Theorem \ref{theorem: Main} follows from Theorem \ref{theorem: serre}. Otherwise, $K/k$ is quadratic and $A$ and $A'$ have Sato--Tate group $\stgroup[N(\Unitary(3))]{1.6.B.2.1a}$. Then, for a prime $\ell$ totally split in $M$, we have that $V_\ell(A)\simeq \Ind_K^k(V_\lambda(A))$, where~$\lambda$ is an embedding of $M$ into $\Q_\ell$. Theorem \ref{theorem: Main} then follows from the case $K=k$ and Lemma \ref{lemma: greatly simplifying lemma}.

\subsubsection{Absolute type \Eb} We will need the following lemma.

\begin{lemma}\label{lemma: simplifying lemma degree 3}
Suppose that \eqref{equation: main condition} holds for $A$ and $A'$, and that:
\begin{enumerate}[i)]
\item The endomorphism field $K/k$ of $A$ is a degree $3$ cyclic extension. 
\item $A$ has Frobenius traces concentrated in $K$.
\end{enumerate} 
Then $A$ and $A'$ are quadratic twists.
\end{lemma}

\begin{proof}
By Theorem \ref{theorem: mild refinement serre}, there is a quadratic character $\chi$ of $G_K$ such that~$A'_{K}\sim A_{K,\chi}$ cutting out an extension $L/K$ such that $L/k$ is Galois. If $L=K$, the lemma is clear. Otherwise $\Gal(L/k)\simeq \sym 3$ of $\cyc 6$. In any case, there is a quadratic extension $F/k$ such that $L=FK$. Let $\psi$ be the nontrivial quadratic character of $F/k$, so that $\psi|_K=\chi$. By $ii)$, we have $\varrho_{A,\ell}\otimes \psi = \varrho_{A,\ell}\otimes \chi$, and the lemma follows. 
\end{proof}

Let us return to the case that $A$ has absolute type $\Eb$. We can choose a prime~$\ell$ such that $\varrho_{A,\ell}|_K\simeq \varrho_1\oplus \varrho_2 \oplus \varrho_3$,
where the $\varrho_i\colon G_K\rightarrow \GL_2(\Q_\ell)$ are strongly absolutely irreducible pairwise nonisomorphic representations. If $\varrho_{A,\ell}$ is reducible, then one of the $\varrho_i$ descends to $G_k$ and the Zariski closure of its image is connected. Theorem~\ref{theorem: Main} then follows from Theorem~\ref{theorem: serre}.
Suppose that $\varrho_{A,\ell}$ is irreducible. Then, $K/k$ has degree~$3$ or ~$6$, depending on whether the Sato--Tate group of $A$ is $\stgroup[E_s]{1.6.E.3.1a}$ or $\stgroup[E_{s,t}]{1.6.E.6.1a}$. In the first case, Theorem \ref{theorem: Main} follows from Lemma \ref{lemma: simplifying lemma degree 3}. In the second case, let $N/k$ be the quadratic subextension of $K/k$. By the previous case, $A_N$ and $A_N'$ are quadratic twists, and then Theorem \ref{theorem: Main} follows from Lemma \ref{lemma: greatly simplifying lemma}.

\subsubsection{Absolute type \Hb} We may assume that $A$ is absolutely simple and that $\End^0(A_\Qbar)$ is a sextic CM field $M$, as otherwise Theorem \ref{theorem: Main} follows from Corollary~\ref{corollary: CM factor}. 
From \cite[\S4.3]{FKS21c}, we see that one of the following cases occurs:
\begin{enumerate}[i)]
\item $K=k$ and $\End^0(A)\simeq M$.
\item $K/k$ is quadratic and $\End^0(A)$ is a real cubic field.
\item $K/k$ is cyclic of order $3$ and $\End^0(A)$ is an imaginary quadratic field. 
\item $K/k$ is cyclic of order $6$ and $\End^0(A)\simeq \Q$. 
\end{enumerate}
Let $\ell$ be a prime totally split in $M$. If we set $n=6/[K:k]=[\End^0(A):\Q]$, then $V_\ell(A)$ admits a decomposition analogous to \eqref{equation: moduledescription}, and hence $A$ has Frobenius traces concentrated in $K$. In case i), Theorem \ref{theorem: Main} follows from Theorem \ref{theorem: serre}. In case ii) (resp. iii), iv)), it follows from case i) and Lemma \ref{lemma: greatly simplifying lemma} (resp. case i) and Lemma~\ref{lemma: simplifying lemma degree 3}, case iii) and Lemma~\ref{lemma: greatly simplifying lemma}).

\section{Examples}\label{section: examples}

In this final section, we provide two examples. Let $A$ be an abelian variety defined over the number field $k$, and let $\ell$ be a prime. For a prime $\p$ of $\Sigma_k$ coprime to $\ell$, we let 
$a_\p(A)$ denote $\Tr(\varrho_{A,\ell}(\Frob_\p))$. 
\subsection{First example} We will exhibit two abelian surfaces $A$ and $A'$ defined over $\Q$ which, despite not being quadratic twists, satisfy $a_p(A)=\pm a_p(A')$ for all rational odd primes $p$. Let $C$ and $C'$ denote the genus $2$ curves defined over~$\Q$ given by 
$$
y^2=x^5-x \qquad{\text{and}} \qquad y^2=x^5+4x\,.
$$
Let $A$ and $A'$ denote the Jacobians of $C$ and $C'$. Note that $C$ and $C'$ have good reduction outside $2$ and thus so do $A$ and $A'$. 

\begin{proposition}
For every odd prime $p$, we have $a_p(A)=\pm a_p(A')$. Nonetheless,~$A$ and~$A'$ are not quadratic twists.
\end{proposition}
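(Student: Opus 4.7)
The plan has three parts: establish the common CM structure of $A$ and $A'$; write down an explicit isomorphism $A\simeq A'$ defined over a degree-$4$ extension; and then rule out a quadratic-twist relation by a Galois-descent argument.

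Both $C$ and $C'$ admit the order-$4$ automorphism $(x,y)\mapsto(-x,iy)$ over $\Qbar$ (whose square is the hyperelliptic involution), which endows $A_\Qbar$ and $A'_\Qbar$ with an order-$4$ endomorphism $\phi$ satisfying $\phi^2=-1$. Consequently both $A$ and $A'$ are geometrically isogenous to the square of a CM elliptic curve with CM by $\Z[i]$, so they share the same endomorphism field $K/\Q$ (which contains $\Q(i)$) and the same Sato--Tate group. The key property exploited in the proof of Lemma~\ref{lemma: Cnextensions} then yields
\[
a_p(A)=a_p(A')=0 \qquad\text{for every odd prime $p$ not splitting completely in $K/\Q$.}
\]

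Using $(1+i)^4=-4$, the substitution $(x,y)\mapsto((1+i)x,\beta y)$ with $\beta\in\Qbar$ satisfying $\beta^2=-4(1+i)$ transforms $y^2=x^5-x$ into $y^2=x^5+4x$, so $A\simeq A'$ over $\Q(\beta)$. Since $\beta^2\in\Q(i)\subseteq K$, for every $\sigma\in G_K$ we have $\sigma(\beta)^2=\beta^2$, so $\sigma(\beta)/\beta\in\{\pm 1\}$; hence for every odd prime $p$ splitting completely in $K$ one obtains $a_p(A')=\pm a_p(A)$ directly. Combined with the key property, this gives $a_p(A)=\pm a_p(A')$ for all odd primes $p$.

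The main obstacle is to show $A$ and $A'$ are not quadratic twists over $\Q$. The strategy is to restrict the twist to $\Q(i)$: the cocycle $\sigma\mapsto\sigma(\beta)/\beta$ on $G_{\Q(i)}$ corresponds via Kummer theory to the class of $\beta^2=-4(1+i)$, which (using $-4=(2i)^2$) equals the class of $1+i$ in $\Q(i)^\times/(\Q(i)^\times)^2$. If $A\sim A'_d$ over $\Q$ for some $d\in\Q^\times/(\Q^\times)^2$, then restriction to $\Q(i)$ would force the image of $d$ in $\Q(i)^\times/(\Q(i)^\times)^2$ to match the class of $1+i$ (modulo any nontrivial finite-order character self-twists of $V_\ell(A)|_{\Q(i)}$, which one verifies to be absent for the present CM situation, since each elliptic factor of $A_{\Q(i)}$ has CM by $\Z[i]$ defined already over $\Q(i)$). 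Equivalently, $d(1-i)/2$ would have to be a square in $\Q(i)$; writing $(a+bi)^2=d(1-i)/2$ with $a,b\in\Q$ yields the system $a^2-b^2=d/2$, $2ab=-d/2$, from which $a^2=d(1\pm\sqrt 2)/4$, which is irrational for every $d\in\Q^\times$. Equivalently, $\Q(\beta)/\Q$ is not Galois (since $\bar\beta=\beta\cdot\zeta_8^3\notin\Q(\beta)$ because $\sqrt 2\notin\Q(\beta)$), so it cannot be written as $\Q(i)\cdot F$ for any quadratic extension $F/\Q$---which would be forced if $A$ and $A'$ were quadratic twists over $\Q$.
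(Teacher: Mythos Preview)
Your approach differs substantially from the paper's. The paper dispatches the ``not quadratic twists'' claim by a one-line computation: $L_3(A,T)=1-2T^2+9T^4$ while $L_3(A',T)=1+2T^2+9T^4$, so $A_3$ and $A'_3$ are not quadratic twists of each other. For $a_p(A)=\pm a_p(A')$ the paper cites \cite[Prop.~4.9]{FS14}, using the fields $K=\Q(\sqrt 2,i)$, $L=K(2^{1/4})$, $L'=K(\sqrt{2+\sqrt 2})$ and an analysis of residue degrees.

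Your argument contains a genuine gap. You assert that $A'_{\Q(i)}$ has no nontrivial quadratic self-twist, but this is false: the map $(x,y)\mapsto(ix,(1+i)y)$ is an isomorphism from $C':y^2=x^5+4x$ to its quadratic twist $y^2=2(x^5+4x)$ defined over $\Q(i)$, so $A'_{\Q(i)}\simeq(A'_{\chi_2})_{\Q(i)}$ with $\chi_2|_{\Q(i)}$ nontrivial. Your descent argument therefore only shows that a hypothetical $d\in\Q^\times$ with $A'\sim A_d$ satisfies $[d]\in\{[1+i],[2(1+i)]\}$ in $\Q(i)^\times/(\Q(i)^\times)^2$; the second case is not addressed. (It can be ruled out by the same irrationality computation, so the strategy is salvageable, but the proof as written is incomplete.)

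Two further misidentifications should be corrected. First, the elliptic factor $E$ has CM by $\Z[\sqrt{-2}]$, not $\Z[i]$: the curve $y^2=x^3-5x^2-5x+1$ from \cite{FS14} has $j=8000$. The order-$4$ automorphism you describe embeds $\Z[i]$ into $\End(A_\Qbar)\otimes\Q\simeq M_2(\Q(\sqrt{-2}))$, not into the CM field of the elliptic factor; this is also why your justification for the absence of self-twists fails. Second, your appeal to Lemma~\ref{lemma: Cnextensions} is misplaced: the ``key property'' there is stated for the Sato--Tate groups $C_{4,1}$ and $F_{ac}$, whose component groups are cyclic of order~$4$, whereas here $\Gal(K/\Q)\simeq\cyc 2\times\cyc 2$. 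The vanishing $a_p(A)=0$ for $p\not\equiv 1\pmod 8$ does hold, but you should either cite \cite{FS14} directly or argue via the order-$8$ automorphism $(x,y)\mapsto(ix,\zeta_8 y)$: for $p\not\equiv 1\pmod 8$ this endomorphism $\phi$ satisfies $\Frob_p\,\phi\,\Frob_p^{-1}\in\{-\phi,\phi^{-1},-\phi^{-1}\}$, and in each case $\Frob_p$ permutes the $\zeta_8$-eigenspaces of $\phi$ without fixed blocks, forcing $\Tr(\Frob_p)=0$.
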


\begin{proof}
By computing $L_3(A,T)$ and $L_3(A',T)$, one sees that $A_3$ and $A'_3$ are not quadratic twists, and hence neither are $A$ and $A'$. Let $p$ be an odd prime. We will rely on the results of \cite{FS14} in order to show that $a_p(A)=\pm a_p(A')$. Accordingly to \cite[Table 5]{FS14}, define $K=\Q(\sqrt 2,i)$, $L=K(2^{1/4})$, and $L'=K(\sqrt{2+\sqrt 2})$. Let~$r$ (resp. $s$, $s'$) denote the residue degree of $p$ in $K$ (resp. $L$, $L'$). Since $L\cap L'=K$ and $\Gal(L/\Q)\simeq \cyc 2\times \cyc 4$ and $\Gal(L/\Q)\simeq \dih 4$, we have three cases: i) if $r=1$, then $s,s'=1$ or $2$; if $r=2$, then $s,s'=2$ or $4$; if $r=4$, then $s=s'=4$.
Then \cite[Prop. 4.9]{FS14}, implies that $a_p(A)=a_p(A')=0$ in cases ii) and iii), and that $a_p(A)=\pm a_p(A')$ in case i).  
\end{proof} 
 
\subsection{Second example}
We will exhibit two abelian fourfolds $A$ and $A'$ defined over $\Q$ which, despite not being quadratic twists, are locally quadratic twists at all rational odd primes. This example was obtained by means of a computer exploration of the family of curves $y^2=x^9+ax$, with $a\in \Q$, carried by Edgar Costa, to whom I express my deepest gratitude.
Let $C$ and $C'$ denote the genus $4$ curves defined over $\Q$ and given by the affine models
$$
y^2=x^9+x\qquad \text{and} \qquad y^2=x^9+16x\,.
$$ 
Let $A$ and $A'$ denote the Jacobians of $C$ and $C'$. Note that $C$ and $C'$ have good reduction outside $2$ and thus so do $A$ and $A'$. 

\begin{proposition}
For every odd prime $p$, the reductions $A_p$ and $A'_p$ are quadratic twists. Nonetheless, $A$ and $A'$ are not.
\end{proposition}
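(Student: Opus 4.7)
The proposition has two parts. For the non-triviality of the global twist: since both $A$ and $A'$ have good reduction outside $2$, any quadratic character $\chi$ of $G_\Q$ satisfying $A' \sim A_\chi$ would be unramified outside $2$. By Kronecker--Weber, such a $\chi$ cuts out a quadratic subfield of the $2$-power cyclotomic tower, leaving only four candidates: the trivial character and those attached to $\Q(i)$, $\Q(\sqrt{2})$, $\Q(\sqrt{-2})$. A direct computation of $L_p(A, T)$ and $L_p(A', T)$ at one small odd prime (e.g.\ $p = 17$) would rule out all four.

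For the local statement, the key observation is that $C$ and $C'$ become isomorphic over $L := \Q(2^{1/4})$ via the substitution $(x, y) \mapsto (x/\sqrt{2},\, y/(4 \cdot 2^{1/4}))$, and that $L/\Q$ is unramified at every odd prime (the discriminant of $X^4 - 2$ is $\pm 2^{11}$). Translating this substitution to the level of character sums over $\F_{p^n}$, I would obtain the identity
$$
a_{p^n}(A') = \chi_{p^n}(\sqrt{2}) \cdot a_{p^n}(A),
$$
valid whenever $\sqrt{2} \in \F_{p^n}$, where $\chi_{p^n}$ denotes the quadratic character of $\F_{p^n}^\times$.

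The remaining case, $\sqrt{2} \notin \F_{p^n}$, occurs precisely when $p \equiv \pm 3 \pmod{8}$ and $n$ is odd; here I would argue that $a_{p^n}(A) = a_{p^n}(A') = 0$ by exploiting the CM structure. The order-$16$ automorphism $\alpha \colon (x, y) \mapsto (\zeta_8 x, \zeta_{16} y)$ of $C$ endows $A_\Qbar$ with CM by $\Z[\zeta_{16}]$, so after base change to a prime $\ell$ totally split in $\Q(\zeta_{16})$, the Tate module decomposes as $V_\ell(A) = \bigoplus_{j \in (\Z/16)^\times} V_j$, with $\Frob_p$ permuting the indexing set via multiplication by $p \bmod 16$. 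The orbit length equals $4$ when $p \equiv \pm 3 \pmod{8}$, so $\Frob_p^n$ has no fixed eigenspace for $n$ odd, forcing $\Tr(\Frob_p^n \mid V_\ell(A)) = 0$.

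Combining these ingredients, for each odd $p$ I would conclude that $L_p(A', T) = L_p(A, \epsilon_p T)$ for some $\epsilon_p \in \{\pm 1\}$, so $A_p$ and $A'_p$ are quadratic twists. The hardest part is verifying consistency of the sign $\chi_{p^n}(\sqrt{2})$ as $n$ varies; this should follow from the identity $\chi_{p^n}(x) = \chi_p(x)^{n \bmod 2}$ for $x \in \F_p^\times$ together with the observation that, for $p \equiv \pm 3 \pmod{8}$, $a_{p^n}(A)$ is nonzero only when $4 \mid n$, in which case $\chi_{p^n}(\sqrt{2}) = 1$.
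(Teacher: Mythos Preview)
Your proposal is correct and largely parallels the paper's proof, but for the case $p\equiv 3,5\pmod 8$ you take a genuinely different route. The paper invokes an external result of Nie on zeta functions of trinomial curves (its Lemma on the shape $L_p(A,T)=1+sT^4+p^4T^8$) and then matches point counts over $\F_{p^4}$ via the isomorphism \eqref{equation: isomorphism214} to conclude $L_p(A,T)=L_p(A',T)$. You instead exploit the CM structure directly: the curve automorphism $(x,y)\mapsto(\zeta_8 x,\zeta_{16}y)$ has eighth power equal to the hyperelliptic involution, hence acts on $A_{\Qbar}$ with order~$16$, giving CM by $\Z[\zeta_{16}]$; the resulting eigenspace decomposition of $V_\ell(A)$ is permuted by $\Frob_p$ in orbits of length~$4$ (since $p$ has order~$4$ in $(\Z/16\Z)^\times$), forcing $a_{p^n}(A)=a_{p^n}(A')=0$ whenever $4\nmid n$. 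Combined with your character-sum identity for $4\mid n$ and the observation that $\chi_{p^n}|_{\F_{p^2}^\times}$ is trivial when $n/2$ is even, this yields $a_{p^n}(A)=a_{p^n}(A')$ for all $n$, hence $A_p\sim A'_p$. Your argument is more self-contained (it avoids the citation to Nie), while the paper's is shorter once that lemma is granted. For $p\equiv 1,7\pmod 8$ and for the non-existence of a global twist, your approach and the paper's are essentially identical; the paper carries out the computation you allude to at $p=17$, finding traces $-8$ and~$8$, and notes that $17$ splits in all three candidate quadratic fields.

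Two small points of presentation: your claim that the orbit length~$4$ forces vanishing ``for $n$ odd'' should really read ``for $4\nmid n$,'' since you use the stronger statement in your final paragraph; and the verification that $\chi_{p^n}(\sqrt 2)=1$ for $4\mid n$ (when $p\equiv\pm 3\pmod 8$) deserves an explicit line---it follows because $\sqrt 2\in\F_{p^2}^\times$ and $\chi_{p^n}$ restricts trivially to $\F_{p^2}^\times$ when $[\F_{p^n}:\F_{p^2}]$ is even.
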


\begin{proof}
Let $L/\Q$ denote the minimal extension over which all homomorphisms from $A_\Qbar$ to $A'_\Qbar$ are defined. By \cite[Thm. 4.2]{Sil92}, the extension $L/\Q$ is finite, Galois, and unramified outside $2$. We first show that $A$ and $A'$ are not quadratic twists. Suppose the contrary, that is, that there exists a quadratic character $\chi\colon G_\Q\rightarrow \{\pm 1\}$ such that $A'\sim A_\chi$. Note that $\chi$ necessarily factors through an at most quadratic subextension $N/\Q$ of $L/\Q$. Hence, $N/\Q$ is unramified outside $2$, which means that $N$ is either $\Q$, $\Q(\sqrt 2)$, $\Q(i)$, or $\Q(\sqrt{-2})$. One easily computes
$$
\Tr\varrho_{A,\ell}(\Frob_{17})=-8\,,\qquad \Tr\varrho_{A',\ell}(\Frob_{17})=8\,,
$$
which implies that $17$ is inert in $N$. This is however a contradiction with the possibilities determined for $N$.

Let $p$ be an odd prime. We will next show that $A_p$ and $A_p'$ are quadratic twists. Let $F$ denote $\Q(2^{1/4})$. Note that the map
\begin{equation}\label{equation: isomorphism214}
\phi\colon C_{F}\rightarrow C'_F\,,\qquad \phi(x,y)=(\sqrt 2x,2^{9/4}y)
\end{equation}
defines an isomorphism. If $p\equiv 1,7 \pmod 8$, then $a:=2^{1/2}\in \F_p$. The isomorphism
 shows that~$A_p$ and~$A'_p$ are quadratic twists. Suppose from now on that $p \equiv 3,5\pmod 8$. We claim that $A_p$ and $A'_p$ are in fact isogenous. By Lemma~\ref{lemma: Nie} below, there exist integers $s$ and $s'$ such that
\begin{equation}\label{equation: Lpolyshape}
L_p(A,T)=1+sT^4+p^4T^8\,,\quad L_p(A',T)=1+s'T^4+p^4T^8\,.
\end{equation}
The relations between elementary symmetric polynomials and power sums show that \eqref{equation: Lpolyshape} implies that $\#C(\F_{p^i})=\#C'(\F_{p^i})=p^i+1$ for $i=1,\dots, 3$. But \eqref{equation: isomorphism214} immediately implies that $\#C(\F_{p^4})=\#C'(\F_{p^4})$. Thus $L_p(A,T)= L_p(A',T)$, and ~$A_p$ and~$A'_p$ are isogenous.
\end{proof}

\begin{lemma}\label{lemma: Nie}
Let $A$ denote the Jacobian of a curve defined by an affine model $y^2=x^9+cx$ for $c\in \Q^\times$. For every prime $p\equiv 3,5\pmod 8$ of good reduction for~$A$, there exists an integer $s$ such that the $L$-polynomial of $A$ at $p$ is of the form
$$
L_p(A,T)=1+s T^4 +p^4T^8\,.
$$
\end{lemma}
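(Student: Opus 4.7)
The plan is to exploit a large order automorphism of $C$ and the commutation relation it satisfies with Frobenius upon reduction. Let $\zeta_{16}$ be a primitive $16$th root of unity and $\zeta_8 = \zeta_{16}^2$. A direct check (using $\zeta_8^9 = \zeta_8$ and $\zeta_{16}^2 = \zeta_8$) shows that $\alpha\colon (x,y)\mapsto (\zeta_8 x,\zeta_{16}y)$ is an automorphism of $C_{\Q(\zeta_{16})}$ of order $16$, with $\alpha^8$ the hyperelliptic involution. Since $\alpha^*(x^i dx/y) = \zeta_{16}^{2i+1}x^i dx/y$, the induced endomorphism $[\alpha]$ acts on $H^0(\Omega^1_A)$ with eigenvalues $\zeta_{16},\zeta_{16}^3,\zeta_{16}^5,\zeta_{16}^7$. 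By Hodge duality, $[\alpha]$ acts on $V_\ell(A)\otimes\bar\Q_\ell$ with the eight primitive $16$th roots of unity as eigenvalues, each with multiplicity $1$; equivalently, $V_\ell(A)\otimes \bar\Q_\ell = \bigoplus_{a\in(\Z/16)^\times} V_a$ with $\dim V_a = 1$.

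Next I would verify the key commutation relation. For $p\equiv 3,5\pmod 8$, the prime $p$ has order $4$ in $(\Z/16)^\times$, so $p$ is unramified in $\Q(\zeta_{16})$ with residue degree $4$; in particular $\alpha$ has good reduction to an automorphism $\bar\alpha$ of $(A_p)_{\F_{p^4}}$ with the same eigenvalue pattern on $V_\ell(A_p)\otimes \bar\Q_\ell$. Since $\Frob_p$ acts on $\Q(\zeta_{16})$ by $\zeta_{16}\mapsto \zeta_{16}^p$ and on endomorphisms sends $\alpha$ to $\alpha^p$, functoriality of reduction gives
$$
\pi_p\,\bar\alpha = \bar\alpha^p\,\pi_p\qquad \text{in }\End(A_{p,\overline{\F_p}}).
$$
A short computation from this identity shows that $\pi_p$ sends the line $V_a$ to $V_{a\cdot p^{-1}\bmod 16}$.

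Because $p$ has order $4$ in $(\Z/16)^\times$, the eight indices split into two $\pi_p$-orbits of length $4$. On each $4$-dimensional orbit subspace, $\pi_p$ has the matrix of a $4$-cycle rescaled by nonzero scalars $c_1,c_2,c_3,c_4$, whose characteristic polynomial is $T^4 - c_1c_2c_3c_4$. Writing $\gamma_1,\gamma_2\in\bar\Q_\ell$ for the two resulting scalars, I obtain
$$
L_p(A,T) = (1-\gamma_1 T^4)(1-\gamma_2 T^4) = 1 + sT^4 + \gamma_1\gamma_2\,T^8,
$$
where $s:=-(\gamma_1+\gamma_2)$. Integrality of $L_p(A,T)$ forces $s\in\Z$, and $\gamma_1\gamma_2 = \det(\pi_p|V_\ell(A_p)) = p^g = p^4$ by Weil's theorem, yielding the claimed shape.

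I expect the main obstacle to be the care needed in the second paragraph: justifying that $\bar\alpha$ really exists as an endomorphism of $A_p$ over $\F_{p^4}$ and that the Galois conjugation of $\alpha$ on $\End(A_{\bar\Q})$ transports to conjugation by $\pi_p$ after reduction. Both are standard consequences of good reduction and the compatibility of the $\ell$-adic Tate module with specialisation, but they are where the argument could slip; the eigenvalue computation and the cyclic-matrix step are then purely formal.
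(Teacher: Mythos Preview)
Your argument is correct and takes a genuinely different route from the paper. The paper's proof is a three-line citation of \cite[Thm.~1.1]{Nie16}: it identifies the parameters in Nie's general formula for the zeta function of a trinomial curve and observes that the relevant invariant $\mu(\xi)$ equals the multiplicative order of $p$ modulo $16$, which is $4$ when $p\equiv 3,5\pmod 8$. Your proof, by contrast, is self-contained: you exhibit the order-$16$ automorphism $\alpha$, decompose $V_\ell$ into its eigenlines, and read off the shape of $L_p$ from the permutation action of $\pi_p$ on those lines via the commutation relation $\pi_p\bar\alpha=\bar\alpha^{\,p}\pi_p$. This is in effect a direct proof of the relevant special case of Nie's theorem, and it makes transparent \emph{why} the order of $p$ in $(\Z/16\Z)^\times$ governs the exponent gap in $L_p$.

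Two minor comments. First, the appeal to ``Hodge duality'' to pass from the eigenvalues on $H^0(\Omega^1)$ to those on $V_\ell$ is correct but slightly heavy; a cleaner route is to note that $[\alpha]^8=[-1]$ in $\End(A_{\Qbar})$, so the minimal polynomial of $[\alpha]$ over $\Q$ is the irreducible polynomial $T^8+1=\Phi_{16}(T)$, and hence the characteristic polynomial of $[\alpha]$ on the $8$-dimensional space $V_\ell(A)$ is exactly $\Phi_{16}$. Second, the step you flagged as the potential obstacle---that $\bar\alpha$ reduces well and that Galois conjugation becomes conjugation by $\pi_p$---is indeed standard: the identity $\pi_p\circ\phi=\phi^{(p)}\circ\pi_p$ for any $\phi\in\End(A_{p,\overline{\F}_p})$ is just the functoriality of the $p$-power Frobenius, and applying it to $\phi=\bar\alpha$ gives $\phi^{(p)}=\bar\alpha^{\,p}$ since the coefficients $\bar\zeta_8,\bar\zeta_{16}$ are raised to the $p$-th power.
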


\begin{proof}
We will apply \cite[Thm. 1.1]{Nie16} with $m_1=9$, $n_1=0$, $m=1$, $n=2$, $k_1=c$, $k_2=-1$, $\xi_1=7/16$, and $\xi_2=1/2$. In light of part (2) of loc. cit., it suffices to show that the integer $\mu(\xi)$ appearing in that formula is $4$. Here $\xi$ denotes the pair of rational numbers $(\xi_1,\xi_2)$.  By the discussion preceding \cite[Thm. 1.1]{Nie16}, $\mu(\xi)$ is the order of $p$ in $(\Z/16\Z)^\times$. If $p\equiv 3,5\pmod 8$, then this order is $4$.
\end{proof}

\end{document}